\definecolor{tropicalrainforest}{rgb}{0.0, 0.46, 0.37}
\theoremstyle{definition}
\newtheorem{definition}{Definition}[section]
\newtheorem{remark}[definition]{Remark}
\newtheorem{example}[definition]{Example}
\newtheorem{problem}[definition]{Problem}
\newtheorem{note}[definition]{Note}
\theoremstyle{plain}
\newtheorem{theorem}[definition]{Theorem}
\newtheorem{lemma}[definition]{Lemma}
\newtheorem{corollary}[definition]{Corollary}
\newtheorem{proposition}[definition]{Proposition}
\DeclareMathOperator{\Mat}{Mat}
\DeclareMathOperator{\mult}{mult}
\DeclareMathOperator{\End}{End}
\title{\textbf{\Large Every $Q$-polynomial distance-regular graph is sharp over $\mathbb{R}$}

	\author{
		Blas Fern\'andez\footnote{IMFM, Jadranska 19, 1000 Ljubljana, Slovenia;
	University of Primorska, UP FAMNIT, Glagoljaška 8, 6000 Koper, Slovenia; Andrej Marušič Institute, University of Primorska, Muzejski trg 2, 6000 Koper, Slovenia. \\ 
	Email: \texttt{blas.fernandez@famnit.upr.si}} \quad
		Jae-Ho Lee\footnote{Department of Mathematics and Statistics, University of North Florida, Jacksonville, FL 32224, USA. \\
		Email: \texttt{jaeho.lee@unf.edu}} \quad
		Jongyook Park\footnote{Department of Mathematics, Kyungpook National University, Daegu, 41566, Republic of Korea. \\
		Email: \texttt{jongyook@knu.ac.kr} (Corresponding author)}
	}}
\begin{document}

\maketitle

\begin{abstract}
Let $\Gamma$ denote a distance-regular graph with vertex set $X$ and diameter $D \geq 3$.
Fix a vertex $x \in X$.
Let the field $\mathbb{F}$ be either $\mathbb{R}$ or $\mathbb{C}$.
Let $\Mat_X(\mathbb{F})$ denote the $\mathbb{F}$-algebra of matrices whose rows and columns are indexed by $X$ and all entries in $\mathbb{F}$.
The Terwilliger algebra $T^\mathbb{F} = T^\mathbb{F}(x)$ is the subalgebra of $\Mat_X(\mathbb{F})$ generated by the adjacency matrix $A$ of $\Gamma$ and the dual primitive idempotents $\{E_i^*\}_{i=0}^D$ of $\Gamma$ with respect to $x$.
Let $\{E_i\}_{i=0}^D$ denote the primitive idempotents of $A$.
Assume that the ordering $\{E_i\}_{i=0}^D$ is $Q$-polynomial.
Let $W$ denote an irreducible $T^\mathbb{F}$-module.
We say that $W$ is sharp over $\mathbb{F}$ whenever $\dim (E_r^* W) = 1$, where $r$ is the endpoint of $W$.
It is known, by Nomura and Terwilliger (2008), that every irreducible $T^\mathbb{C}$-module is sharp.
In this paper, we prove that every irreducible $T^\mathbb{R}$-module is sharp.
Once this is established, we obtain four additional results:
(i) if $W$ is an irreducible $T^\mathbb{R}$-module, then its complexification $W^\mathbb{C}= W \otimes_{\mathbb{R}} \mathbb{C}$ is an irreducible $T^\mathbb{C}$-module;
(ii) two irreducible $T^\mathbb{R}$-modules $W_1$ and $W_2$ are isomorphic if and only if their complexifications $W_1^\mathbb{C}$ and $W_2^\mathbb{C}$ are isomorphic as $T^\mathbb{C}$-modules;
(iii) if $\bigoplus_{i=1}^h \Mat_{n_i}(\mathbb{C})$ is the Wedderburn decomposition of $T^\mathbb{C}$, then $\bigoplus_{i=1}^h \Mat_{n_i}(\mathbb{R})$ is the Wedderburn decomposition of $T^\mathbb{R}$;
(iv) each of the subalgebras $E_1^* T E_1^*$, $E_1 T E_1$, $E_D^* T E_D^*$, and $E_D T E_D$ is commutative and every element of these algebras is a symmetric matrix.

\smallskip
\noindent
\textbf{Keywords:} $Q$-polynomial, distance-regular graph, Terwilliger algebra, sharpness, Wedderburn decomposition

\noindent 
\textbf{2020 Mathematics Subject Classification:} 05E30, 05E10

\end{abstract}



\section{Introduction}
This paper is concerned with distance-regular graphs \cite{BI1984, BCN}.
These graphs possess a high degree of combinatorial regularity and a rich algebraic structure \cite{Biggs, BCN}.
Distance-regular graphs were introduced by Biggs \cite{Biggs}; notable examples include the Hamming graphs, Johnson graphs, and Grassmann graphs \cite[Chapter~9]{BCN}.
Since their introduction, distance-regular graphs have been extensively studied, and connections with other areas have been found, such as partially ordered sets, representation theory, quantum groups, and orthogonal polynomials; see \cite{BI1984, BCN, DKT2016, Ter2024} and references therein.
The $Q$-polynomial property of a distance-regular graph was introduced by Delsarte in the context of coding theory \cite{Delsarte1973}.
The subconstituent algebra (or Terwilliger algebra) was introduced by Terwilliger in his study of $Q$-polynomial distance-regular graphs \cite{Ter1992JAC, Ter1993JAC-1, Ter1993JAC-2}.
Since then, the importance of the Terwilliger algebra has grown significantly; see \cite{Caughman1999, Egge2000, Suzuki2005JoAC, IT2009MMJ, Lee2013, GK2015, Lee2017, TA2019, TW2020, TKCP2022, BCV2022, Fernandez2022, Huang2025, Morales2025}.

\smallskip
Throughout this paper, let $\mathbb{F}$ denote a field. 
Let $\Gamma$ denote a distance-regular graph with vertex set $X$.
Let $\mathbb{F}^X$ denote the $\mathbb{F}$-vector space consisting of the column vectors whose coordinates are indexed by $X$ and all entries in $\mathbb{F}$. 
Let $\Mat_X(\mathbb{F})$ denote the $\mathbb{F}$-algebra of matrices whose rows and columns are indexed by $X$ and all entries in $\mathbb{F}$.
The algebra $\Mat_X(\mathbb{F})$ acts on $\mathbb{F}^X$ by left multiplication. 
We call $\mathbb{F}^X$ the \emph{standard module}.
In the literature on distance-regular graphs, one typically assumes that the field $\mathbb{F}$ is either $\mathbb{R}$ or $\mathbb{C}$.
Each choice of field has its own advantages and disadvantages depending on the perspective from which $\Gamma$ is studied.
These advantages and disadvantages are discussed in the following paragraphs.

\smallskip
When studying $\Gamma$ from a geometric perspective, it is convenient to assume $\mathbb{F}=\mathbb{R}$ since the standard module $\mathbb{R}^X$ has a Euclidean space structure.
Indeed, working over $\mathbb{R}$ facilitates geometric analysis of the eigenspaces, the cosine sequences corresponding to its eigenvalues, and representations of $\Gamma$ (cf.~\cite[Section~4]{Ter2024}).
Such a geometric analysis is still available over $\mathbb{C}$, but it is generally more cumbersome since the Hermitean dot product on $\mathbb{C}^X$ is more complicated than the dot product on $\mathbb{R}^X$.

\smallskip
When studying $\Gamma$ from the perspective of the Terwilliger algebra, it is natural to work over $\mathbb{C}$.
To describe this perspective, we recall some notation and preliminaries.
Fix a vertex $x \in X$.
The \emph{Terwilliger algebra} $T=T(x)$ of $\Gamma$ is the subalgebra of $\Mat_X(\mathbb{F})$ generated by the adjacency matrix of $\Gamma$ and the dual primitive idempotents of $\Gamma$ with respect to $x$; see Section~\ref{sec:T-alg}.
When it is important to emphasize the field $\mathbb{F}$, we will write $T=T^\mathbb{F}$.
The algebra $T$ is closed under the conjugate-transpose map and hence semisimple \cite[Lemma~3.4]{Ter1992JAC}.
By the Wedderburn theory \cite{CR1962}, the algebra $T$ decomposes as a direct sum of full matrix algebras.
The precise form of this decomposition depends on whether $\mathbb{F}=\mathbb{C}$ or $\mathbb{F}=\mathbb{R}$.
The Wedderburn decomposition of $T^\mathbb{C}$ is described by the $\mathbb{C}$-algebra isomorphism
\begin{equation}\label{eq:Wed decomp over C}
	T^\mathbb{C}  \cong \bigoplus_{i=1}^h \Mat_{n_i}(\mathbb{C}),
\end{equation}
where $h, n_1, \ldots, n_h$ are positive integers and $\Mat_{n_i}(\mathbb{C})$ denotes the $\mathbb{C}$-algebra of all $n_i \times n_i$ matrices with entries in $\mathbb{C}$.
From this decomposition, it follows that every $T^\mathbb{C}$-module decomposes as a direct sum of irreducible $T^\mathbb{C}$-submodules, each corresponding to one of the simple components $\Mat_{n_i}(\mathbb{C})$ in \eqref{eq:Wed decomp over C}.
On the other hand, the Wedderburn decomposition of $T^\mathbb{R}$ is described by the $\mathbb{R}$-algebra isomorphism
\begin{equation}\label{eq:Wed decomp over R}
	T^\mathbb{R}  \cong \bigoplus_{i=1}^{\ell} \Mat_{m_i}(\mathbb{D}_i),
\end{equation}
where $\ell, m_1, \ldots, m_{\ell}$ are positive integers and each $\mathbb{D}_i$ is either $\mathbb{R}$, $\mathbb{C}$, or $\mathbb{H}$, with $\mathbb{H}$ denoting the quaternions.
Comparing the decompositions \eqref{eq:Wed decomp over C} and \eqref{eq:Wed decomp over R}, we can see that \eqref{eq:Wed decomp over R} is relatively more complicated. For this reason, it is natural to work over $\mathbb{C}$.

\smallskip
In the present paper, we assume that $\Gamma$ is $Q$-polynomial.
In this case, every $\mathbb{D}_i$ in \eqref{eq:Wed decomp over R} turns out to be $\mathbb{R}$; this will be shown in Theorem~\ref{thm:Wedderburn}.
This result follows from a property of $\Gamma$ called sharpness.
In this paper, we study the sharpness property of $\Gamma$ and its effect on the structure of $T^{\mathbb{R}}$.
We now summarize our main results after several preliminary remarks concerning $\Gamma$ and its Terwilliger algebra; formal definitions appear in Sections~\ref{sec:DRGs}--\ref{sec:Q-poly Property}.

\smallskip
Throughout the paper, we assume that $\Gamma$ has diameter $D \geq 3$.
Assume that $\mathbb{F}=\mathbb{R}$ or $\mathbb{F}=\mathbb{C}$.
Let $A \in \Mat_X(\mathbb{F})$ denote the adjacency matrix of $\Gamma$.
Let $\{E_i\}_{i=0}^D$ denote the primitive idempotents of $A$.
Assume that the ordering $\{E_i\}_{i=0}^D$ is $Q$-polynomial.
Fix a vertex $x\in X$.
Let $A^*=A^*(x) \in \Mat_X(\mathbb{F})$ denote the diagonal matrix with the $(y,y)$-entry $(A^*)_{y,y} = |X| (E_1)_{x,y}$ for all $y \in X$.
We call $A^*$ the dual adjacency matrix of $\Gamma$ with respect to $x$.
For $0 \leq i \leq D$, let $E_i^*=E_i^*(x) \in \Mat_X(\mathbb{F})$ denote the diagonal matrix with $(y,y)$-entry
\begin{equation*}
	(E_i^*)_{y,y} =
	\begin{cases}
	1 & \text{ if } \quad \partial(x,y) = i,  \\
	0 & \text{ if } \quad \partial(x,y) \neq i 
	\end{cases}
	\qquad (y \in X).
\end{equation*}
The matrices $\{E_i^*\}_{i=0}^D$ are the primitive idempotents of $A^*$.
The Terwilliger algebra $T=T(x)$ of $\Gamma$ is generated by $A$ and $A^*$.
We now recall the notion of a $T$-module.
By a $T$-module, we mean a $T$-submodule of $V = \mathbb{F}^X$.
Define a form $\langle \ , \ \rangle: V \times V \to \mathbb{F}$ such that for $u,v \in V$, 
\begin{equation*}
	\langle u, v \rangle =
	\begin{cases}
	\overline{u}^\top v 	& \text{ if } \quad  \mathbb{F}=\mathbb{C},  \\
	u^\top v 			& \text{ if } \quad \mathbb{F}=\mathbb{R},
	\end{cases}
\end{equation*}
where $\top$ denotes transpose and $-$ denotes complex conjugation.  
By \cite[Lemma 3.4]{Ter1992JAC} $V$ decomposes into an orthogonal direct sum of irreducible $T$-modules.
Moreover, every irreducible $T$-module $W$ is the orthogonal direct sum of the nonvanishing $E_iW$ $(0 \leq i \leq D)$ and the orthogonal direct sum of the nonvanishing $E_i^*W$ $(0 \leq i \leq D)$.

\smallskip
Let $W$ denote an irreducible $T$-module with endpoint $r$, dual endpoint $s$, and diameter $d$; see Section~\ref{sec:T-alg}.
By \cite[Section~3]{Ter1992JAC}, for $0 \leq i \leq D$ we have $E_i^*W \neq 0$ if and only if $r \leq i \leq r+d$.
Moreover, the sum $W = \sum_{i=0}^d E_{r+i}^*W$ is direct and its summands are the eigenspaces of $A^*$ on $W$.
For $0 \leq i \leq D$, we have $E_iW \neq 0$ if and only if $s \leq i \leq s+d$. 
The sum $W = \sum_{i=0}^d E_{s+i}W$ is direct and its summands are the eigenspaces of $A$ on $W$.
For $0 \leq i \leq d$,
\begin{align*}
	AE_{r+i}^*W 	& \subseteq E_{r+i-1}^*W + E_{r+i}^*W + E_{r+i+1}^*W,\\
	A^*E_{s+i}W 	& \subseteq E_{s+i-1}W + E_{s+i}W + E_{s+i+1}W.
\end{align*}
By the above comments, $A$, $A^*$ act on $W$ as a tridiagonal pair; see Definition \ref{def1}.
Define a sequence of scalars $\{\rho_i\}_{i=0}^d$ by $\rho_i = \dim(E_{r+i}^*W)$ for $0 \leq i \leq d$.
By \cite[Corollary~5.7]{ITT}, $\rho_i = \dim(E_{s+i}W)$ for $0 \leq i \leq d$.
By \cite[Corollaries~5.7,~6.6]{ITT}, the sequence $\{\rho_i\}_{i=0}^d$ satisfies $\rho_i = \rho_{d-i}$ for $0 \leq i \leq d$ and $\rho_{i-1} \leq \rho_i$ for $1\leq i \leq \lfloor d/2 \rfloor$.
The sequence $\{\rho_i\}_{i=0}^d$ is called the shape of $W$.
The $T$-module $W$ is called \emph{sharp} whenever $\rho_0=1$ \cite[Definition~1.5]{NT2008LAA2}.
We say $\Gamma$ is \emph{sharp} over $\mathbb{F}$ whenever, for every vertex $y$, each irreducible $T(y)$-module is sharp.

\smallskip
In \cite{NT2008LAA}, Nomura and Terwilliger showed that every irreducible $T^\mathbb{C}$-module is sharp.
Hence $\Gamma$ is sharp over $\mathbb{C}$.
Motivated by this result, it is natural to ask whether $\Gamma$ is also sharp over $\mathbb{R}$.
Although the sharpness of $\Gamma$ over $\mathbb{R}$ has been believed, to the best of our knowledge this fact has not been formally proved or explicitly stated in the literature. 
We now present the first main result of this paper.

\begin{theorem}\label{mainthm}
Every irreducible $T^\mathbb{R}$-module is sharp.
\end{theorem}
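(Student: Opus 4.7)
My strategy is to complexify $W$, apply the Nomura--Terwilliger sharpness theorem over $\mathbb{C}$, and then rule out the non-real possibilities for the centralizer $\mathrm{End}_{T^{\mathbb{R}}}(W)$ using the self-adjointness of $A$ and $A^*$.

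Let $W$ be an irreducible $T^{\mathbb{R}}$-module with endpoint $r$, diameter $d$, and shape $\{\rho_i\}_{i=0}^d$; I want to prove $\rho_0 = 1$. First I form the complexification $W^{\mathbb{C}} := W\otimes_{\mathbb{R}}\mathbb{C}$. Since $A$, $A^*$, and each $E_i^*$ is a real matrix, $T^{\mathbb{C}} = T^{\mathbb{R}}\otimes_{\mathbb{R}}\mathbb{C}$ acts on $W^{\mathbb{C}}$, and $E_i^*W^{\mathbb{C}} = (E_i^*W)\otimes_{\mathbb{R}}\mathbb{C}$, giving $\dim_{\mathbb{C}}(E_i^*W^{\mathbb{C}}) = \rho_i$ for $0\le i\le d$. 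Next I decompose $W^{\mathbb{C}}=\bigoplus_{j} W_j$ into irreducible $T^{\mathbb{C}}$-submodules. Because $E_i^*W^{\mathbb{C}}=0$ for $i<r$, each $W_j$ has endpoint $r_j\ge r$, and the Nomura--Terwilliger theorem yields $\dim_{\mathbb{C}}(E_{r_j}^*W_j)=1$. Therefore
\[
\rho_0 \;=\; \dim_{\mathbb{C}}(E_r^*W^{\mathbb{C}}) \;=\; \#\{\,j : r_j = r\,\},
\]
and it suffices to show that only one summand of $W^{\mathbb{C}}$ attains endpoint $r$.

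By Schur's lemma, $\mathbb{D}:=\mathrm{End}_{T^{\mathbb{R}}}(W)$ is a real finite-dimensional division algebra, hence $\mathbb{D}\in\{\mathbb{R},\mathbb{C},\mathbb{H}\}$. The identity $\mathrm{End}_{T^{\mathbb{C}}}(W^{\mathbb{C}})=\mathbb{D}\otimes_{\mathbb{R}}\mathbb{C}$ then implies that $W^{\mathbb{C}}$ is irreducible if $\mathbb{D}=\mathbb{R}$, a direct sum of two non-isomorphic irreducibles if $\mathbb{D}=\mathbb{C}$, and a direct sum of two isomorphic copies of a single irreducible if $\mathbb{D}=\mathbb{H}$. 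The antilinear conjugation $\sigma: w\otimes z\mapsto w\otimes\bar z$ is $T^{\mathbb{R}}$-equivariant and commutes with each $E_i^*$, so $\sigma$ permutes the summands $W_j$ while preserving endpoints; consequently in both non-real cases the two summands share the endpoint $r$, forcing $\rho_0=2$. The theorem is thus equivalent to the assertion $\mathbb{D}=\mathbb{R}$.

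The hard part is excluding $\mathbb{D}\in\{\mathbb{C},\mathbb{H}\}$. Here I would use that $(A,A^*)$ acts on each $W_j$ through real symmetric matrices, so all eigenvalue and dual eigenvalue sequences of the induced tridiagonal pair are real, and the parameter array of the sharp TDP $(A|_{W_j},A^*|_{W_j})$ is $\sigma$-invariant. The parameter-array classification of sharp TDPs \cite{ITT} then produces a $T^{\mathbb{C}}$-isomorphism $W_j\cong W_j^{\sigma}$, excluding the complex case. To exclude the quaternionic case I would invoke the real inner product on $V$: the induced positive-definite $T^{\mathbb{R}}$-invariant form on $W$ yields an intertwiner $W_j\to W_j^{\sigma}$ whose square is $+\mathrm{id}$, incompatible with the quaternionic condition $J^2=-\mathrm{id}$. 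Combining these reductions forces $\mathbb{D}=\mathbb{R}$ and hence $\rho_0=1$. The main obstacle is precisely this final Frobenius--Schur analysis: turning the abstract real-inner-product and parameter-array information into a rigorous exclusion of the $\mathbb{C}$ and $\mathbb{H}$ types is where the substance of the argument lies, while the preceding reductions are essentially formal.
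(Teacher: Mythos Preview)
Your route is genuinely different from the paper's, and considerably longer. The paper's proof is a one-paragraph argument that never leaves $\mathbb{R}$: by Nomura--Terwilliger \cite[Theorem~2.6]{NT2008LAA}, on $W$ the algebra $E_r^*TE_r^*$ is generated by $E_r^*ME_r^*$; the latter consists of real symmetric matrices (each $E_r^*A_hE_r^*$ is symmetric), and these commute on $W$. A commuting family of real symmetric operators is simultaneously diagonalizable over $\mathbb{R}$, so there is a common eigenvector $0\ne w\in E_r^*W$. Then $E_r^*W = E_r^*Tw = (E_r^*TE_r^*)w = \mathbb{R}w$, giving $\rho_0=1$ immediately. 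No complexification, no Frobenius--Schur.

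Your proposal, by contrast, defers all the work to the last paragraph and does not actually close it. Two concrete gaps:
\begin{itemize}
\item \emph{Quaternionic case.} Your argument that the inner product ``yields an intertwiner $W_j\to W_j^{\sigma}$ whose square is $+\mathrm{id}$, incompatible with $J^2=-\mathrm{id}$'' conflates two different objects: the conjugation $\sigma$ on $W^{\mathbb{C}}$ always squares to $+\mathrm{id}$, regardless of type, while the Frobenius--Schur structure map lives on the irreducible summand $U$. A clean exclusion is available but is not the one you wrote: since $\mathbb{D}=\mathrm{End}_{T^{\mathbb{R}}}(W)$ commutes with $E_r^*$, the space $E_r^*W$ is a nonzero $\mathbb{D}$-module; if $\mathbb{D}=\mathbb{H}$ this forces $4\mid\rho_0$, contradicting $\rho_0=2$.
\item \emph{Complex case.} You assert the parameter array of $W_j$ is $\sigma$-invariant because the eigenvalue and dual eigenvalue sequences are real. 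But the parameter array also contains the split sequence $\{\zeta_i\}$, and those scalars are not obviously real: they are eigenvalues on the one-dimensional $\mathbb{C}$-space $E_r^*W_j$ of certain operators. To see $\zeta_i\in\mathbb{R}$ you must observe that those operators lie in $E_r^*ME_r^*$ and are therefore real symmetric, hence have real spectrum. That observation is exactly the ingredient the paper's direct proof exploits at the outset; your detour through complexification and the sharp-TDP classification ends up needing it anyway.
\end{itemize}
So the approach can be completed, but once you fill the gaps you will have used the same key structural fact (commuting real symmetric generators of $E_r^*TE_r^*$ on $W$) that gives the result in two lines without any of the Wedderburn or parameter-array machinery. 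The paper in fact runs your logic in reverse: it proves sharpness first by the direct argument, and only then deduces $\mathbb{D}=\mathbb{R}$ and the Wedderburn comparison as corollaries.
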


\begin{corollary}\label{main cor}
Every $Q$-polynomial distance-regular graph is sharp over $\mathbb{R}$.
\end{corollary}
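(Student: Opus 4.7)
My strategy is to reduce to the Nomura--Terwilliger theorem over $\mathbb{C}$ \cite{NT2008LAA} by complexifying, and then to rule out the complex and quaternionic types of real irreducibles using the positive-definite dot product on $V = \mathbb{R}^X$.

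Let $W$ be an irreducible $T^{\mathbb{R}}$-module with endpoint $r$ and shape $\{\rho_i\}_{i=0}^d$, and form $W^{\mathbb{C}} := W \otimes_{\mathbb{R}} \mathbb{C}$. Since $A$ and the $E_i^*$ are real matrices, $W^{\mathbb{C}}$ is naturally a $T^{\mathbb{C}}$-module with $E_i^* W^{\mathbb{C}} = (E_i^* W) \otimes_{\mathbb{R}} \mathbb{C}$, so $\dim_{\mathbb{C}}(E_r^* W^{\mathbb{C}}) = \rho_0$. Decomposing $W^{\mathbb{C}} = \bigoplus_{j=1}^{k} U_j$ into irreducible $T^{\mathbb{C}}$-modules and applying Nomura--Terwilliger, each $U_j$ is sharp, so $\rho_0$ equals the number of summands with endpoint $r$. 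By Schur's lemma over $\mathbb{R}$, the division algebra $D := \End_{T^{\mathbb{R}}}(W)$ is $\mathbb{R}$, $\mathbb{C}$, or $\mathbb{H}$, and these three cases correspond respectively to $k=1$ (absolutely irreducible), $k=2$ with $U_2 \cong \overline{U_1} \not\cong U_1$ (complex type), and $k=2$ with $U_2 \cong U_1$ (quaternionic type). Since entry-wise conjugation on $V^{\mathbb{C}}$ commutes with every real matrix, it preserves endpoints; in both non-real cases both $U_j$ have endpoint $r$, forcing $\rho_0 = 2$. Hence the theorem is equivalent to $D = \mathbb{R}$, i.e.\ $W^{\mathbb{C}}$ is already $T^{\mathbb{C}}$-irreducible.

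To prove $D = \mathbb{R}$, my plan is to play two $T^{\mathbb{C}}$-invariant forms on $V^{\mathbb{C}}$ against each other: the positive-definite Hermitian form $(u,v) := \overline{u}^{\top} v$ and the symmetric $\mathbb{C}$-bilinear form $\langle u,v\rangle := u^{\top} v$, both inherited from the dot product on $V$. The symmetric form is nondegenerate on $W^{\mathbb{C}}$, and on each irreducible summand $U_j$, by $T^{\mathbb{C}}$-invariance with respect to transpose together with Schur's lemma, its restriction is either zero or nondegenerate. When nondegenerate, the map $\sigma_j : U_j \to U_j$ determined by $(\sigma_j(u), v) = \langle u, v \rangle$ is antilinear and $T^{\mathbb{R}}$-equivariant, and the positivity of $(\cdot,\cdot)$ forces $\sigma_j^2$ to be a positive real scalar; after rescaling, $\sigma_j$ becomes an antilinear involution on $U_j$, i.e.\ a real structure, so $U_j$ is of real type and neither the complex nor the quaternionic case can occur.

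The main obstacle is thus to show that the symmetric bilinear form restricts nontrivially to every $U_j$, or equivalently, that each $U_j$ is isomorphic to its transpose-dual $U_j^{\vee}$ as a $T^{\mathbb{C}}$-module. Here the $Q$-polynomial hypothesis is essential: I expect to establish this rigidity via the Leonard-pair structure on $(A|_{U_j}, A^*|_{U_j})$ provided by \cite{NT2008LAA}, whose defining scalars come from the real intersection numbers of $\Gamma$. Once the transpose anti-involution is shown to fix each simple component of the Wedderburn decomposition of $T^{\mathbb{C}}$, we obtain $U_j \cong U_j^{\vee}$, and the bilinear-form argument above then produces the required real structure, giving $D = \mathbb{R}$ and $\rho_0 = 1$.
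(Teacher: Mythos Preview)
Your approach via complexification and Frobenius--Schur theory is substantially different from the paper's, and while the overall strategy is reasonable, the plan you sketch for the ``main obstacle'' does not yet close the gap. Showing that the transpose anti-involution fixes each simple Wedderburn component (equivalently $U_j \cong U_j^\vee$) only guarantees the existence of \emph{some} nondegenerate invariant bilinear form on $U_j$; by Schur this form is unique up to scalar and is therefore either purely symmetric (real type) or purely antisymmetric (quaternionic type). In the quaternionic case the specific symmetric form $\langle u,v\rangle=u^\top v$ vanishes on each irreducible summand of $W^\mathbb{C}\cong U\oplus U$ and instead pairs the two copies with one another, so your map $\sigma_j$ is zero and the positivity argument never gets started. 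Ruling out the quaternionic case needs an additional ingredient --- for instance, that the parameter array of the sharp TD system on $U_j$ is real (the relevant scalars being eigenvalues of the real symmetric matrices $E_r^*A_hE_r^*$ on a one-dimensional space) and hence that the system can be realized over $\mathbb{R}$. This is doable, but it is exactly where the structural input from \cite{NT2008LAA} must be invoked, and you have not yet spelled it out. (Also: $(A|_{U_j},A^*|_{U_j})$ is a sharp tridiagonal pair, not in general a Leonard pair.)

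The paper's argument is far more direct and uses the same Nomura--Terwilliger input without ever complexifying. By \cite[Theorem~2.6]{NT2008LAA}, on any irreducible $T^\mathbb{R}$-module $W$ with endpoint $r$ the algebra $E_r^*TE_r^*$ acts through the commutative algebra generated by $E_r^*ME_r^*$; these generators are real symmetric, hence simultaneously diagonalizable over $\mathbb{R}$, so they have a common eigenvector $w\in E_r^*W$. Irreducibility gives $Tw=W$, whence $E_r^*W = E_r^*Tw = (E_r^*TE_r^*)w = \mathbb{R}w$ and $\rho_0=1$. The corollary itself then just adds the cases $D=1$ (complete graphs) and $D=2$ (strongly regular, handled in \cite{TY1994}). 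Your route, even once completed, would recover this same conclusion by a considerably longer path through the representation type of $T^\mathbb{R}$.
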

\noindent
The proofs of the theorem and the corollary appear in Section~\ref{sec:pf thm}.

\smallskip
Building on Theorem~\ref{mainthm}, we now present four additional results.
Let $W$ denote an irreducible $T^\mathbb{R}$-module.
By extending the ground field from $\mathbb{R}$ to $\mathbb{C}$, we turn $W$ into a  $T^\mathbb{C}$-module.
We denote this module by $W^\mathbb{C}$ and call it the \emph{complexification} of $W$; see Section \ref{sec:pf thm2} for the formal definition.
This raises a natural question: is the $T^\mathbb{C}$-module $W^\mathbb{C}$ irreducible?
We answer this question in the following theorem, which is our second main result.

\begin{theorem}\label{thm:complexification}
Let $W$ denote an irreducible $T^\mathbb{R}$-module.
Then the complexification $W^\mathbb{C}$ is an irreducible $T^\mathbb{C}$-module.
\end{theorem}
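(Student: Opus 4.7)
The plan is to reduce the statement to a computation of the endomorphism algebra $D = \End_{T^\mathbb{R}}(W)$ and to show, using Theorem~\ref{mainthm}, that $D=\mathbb{R}$. Since $T^\mathbb{R}$ is semisimple and $W$ is irreducible, Schur's lemma forces $D$ to be a finite-dimensional division algebra over $\mathbb{R}$; by the Frobenius theorem, $D\in\{\mathbb{R},\mathbb{C},\mathbb{H}\}$. The task is therefore to rule out $\mathbb{C}$ and $\mathbb{H}$.

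The heart of the argument uses Theorem~\ref{mainthm} (sharpness of $W$). Let $r$ be the endpoint of $W$; then $\dim_\mathbb{R}(E_r^{*}W)=1$. Fix a nonzero $v\in E_r^{*}W$, so that $E_r^{*}W=\mathbb{R}v$. Because $W$ is irreducible, the cyclic submodule $T^\mathbb{R}v$ equals $W$. For any $\phi\in D$, since $\phi$ is a $T^\mathbb{R}$-module map and $E_r^{*}\in T^\mathbb{R}$, it commutes with $E_r^{*}$; together with $E_r^{*}v=v$ this yields $\phi(v)=E_r^{*}\phi(v)\in\mathbb{R}v$, so $\phi(v)=\lambda v$ for some $\lambda\in\mathbb{R}$. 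Writing any $w\in W$ as $w=av$ with $a\in T^\mathbb{R}$, we obtain $\phi(w)=a\phi(v)=\lambda w$. Hence $\phi=\lambda\,\mathrm{id}_W$ and $D=\mathbb{R}$.

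To finish, I invoke standard base-change facts. The inclusion $T^\mathbb{R}\hookrightarrow T^\mathbb{C}$ identifies $T^\mathbb{C}$ with $T^\mathbb{R}\otimes_\mathbb{R}\mathbb{C}$, since both sides are generated over their respective fields by the same real matrices $A$ and $\{E_i^{*}\}_{i=0}^D$; consequently $\End_{T^\mathbb{C}}(W^\mathbb{C})\cong D\otimes_\mathbb{R}\mathbb{C}=\mathbb{C}$. Writing the isotypic decomposition $W^\mathbb{C}\cong\bigoplus_j U_j^{\oplus m_j}$ with pairwise non-isomorphic irreducible $T^\mathbb{C}$-modules $U_j$, Wedderburn theory gives $\End_{T^\mathbb{C}}(W^\mathbb{C})\cong\bigoplus_j\Mat_{m_j}(\mathbb{C})$. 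Comparing $\mathbb{C}$-dimensions forces $\sum_j m_j^2=1$, so there is a unique summand with $m_j=1$, which establishes irreducibility of $W^\mathbb{C}$.

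The main obstacle is the endomorphism-algebra computation in the second paragraph: without the sharpness guarantee of Theorem~\ref{mainthm}, the division algebra $D$ could a priori be $\mathbb{C}$ or $\mathbb{H}$, in which cases $W^\mathbb{C}$ would split into two complex-conjugate irreducibles or into two copies of a single irreducible. Once $D=\mathbb{R}$ is established, everything else is routine semisimple representation theory.
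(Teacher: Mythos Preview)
Your proof is correct but follows a different route from the paper's. The paper argues directly: it decomposes $W^\mathbb{C}$ into irreducible $T^\mathbb{C}$-modules $\sum_j W_j$, applies $E_r^*$ to obtain $E_r^*W^\mathbb{C}=\sum_j E_r^*W_j$, and uses sharpness ($\dim_\mathbb{C} E_r^*W^\mathbb{C}=1$) to conclude that exactly one summand $W_\xi$ meets $E_r^*$ nontrivially. A nonzero $z\in E_r^*W$ then lies in $W_\xi$, and cyclic generation gives $W=T^\mathbb{R}z\subseteq T^\mathbb{C}z=W_\xi$, hence $W^\mathbb{C}=W+\boldsymbol{i}W\subseteq W_\xi$ and equality follows.

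Your approach instead computes $\End_{T^\mathbb{R}}(W)=\mathbb{R}$ via sharpness and then uses the base-change identity $\End_{T^\mathbb{C}}(W^\mathbb{C})\cong\End_{T^\mathbb{R}}(W)\otimes_\mathbb{R}\mathbb{C}$ together with Wedderburn theory. This is a more conceptual, representation-theoretic argument; in fact your endomorphism computation is exactly the content of the paper's Lemma~\ref{lem:End_T(W)=R}, which the paper proves separately and uses for Theorem~\ref{thm:Wedderburn}. So you have effectively observed that Theorem~\ref{thm:complexification} follows from Lemma~\ref{lem:End_T(W)=R} by standard base change, unifying the proofs of Theorems~\ref{thm:complexification} and~\ref{thm:Wedderburn}. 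The paper's direct argument is more elementary (no Frobenius theorem, no tensor identities) and self-contained; yours makes the structural reason for irreducibility transparent and generalizes immediately to any situation where the real endomorphism algebra is $\mathbb{R}$.
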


Our third main result concerns the effect of complexification on the isomorphism classes.

\begin{theorem}\label{thm:WC iso}
Let $W_1$, $W_2$ denote irreducible $T^\mathbb{R}$-modules.
Then $W_1$ and $W_2$ are isomorphic as $T^\mathbb{R}$-modules if and only if $W_1^\mathbb{C}$ and $W_2^\mathbb{C}$ are isomorphic as $T^\mathbb{C}$-modules.
\end{theorem}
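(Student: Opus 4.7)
The plan is to prove the forward direction by extension of scalars and the reverse direction by a descent argument using complex conjugation together with Schur's lemma.

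The forward direction is essentially formal. Because $A$ and each $E_i^*$ have real entries, one has $T^{\mathbb{C}} = T^{\mathbb{R}} \otimes_{\mathbb{R}} \mathbb{C}$, and the $T^{\mathbb{C}}$-action on $W_i^{\mathbb{C}}$ is precisely the $\mathbb{C}$-linear extension of the $T^{\mathbb{R}}$-action on $W_i$. Hence any $T^{\mathbb{R}}$-isomorphism $\varphi : W_1 \to W_2$ extends to the $T^{\mathbb{C}}$-isomorphism $\varphi \otimes \mathrm{id}_{\mathbb{C}} : W_1^{\mathbb{C}} \to W_2^{\mathbb{C}}$.

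For the reverse direction, I would start with a $T^{\mathbb{C}}$-isomorphism $\Phi : W_1^{\mathbb{C}} \to W_2^{\mathbb{C}}$ and work with the canonical conjugate-linear involutions $\sigma_1, \sigma_2$ on $W_1^{\mathbb{C}}, W_2^{\mathbb{C}}$ (entrywise complex conjugation), whose fixed-point sets are exactly $W_1$ and $W_2$. Since $T^{\mathbb{C}}$ is generated by matrices with real entries, a direct check using $\sigma_i(tv) = \bar t\,\sigma_i(v)$ shows that $\Phi^\sigma := \sigma_2 \circ \Phi \circ \sigma_1$ is again a $T^{\mathbb{C}}$-isomorphism $W_1^{\mathbb{C}} \to W_2^{\mathbb{C}}$. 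By Theorem~\ref{thm:complexification} the $T^{\mathbb{C}}$-module $W_1^{\mathbb{C}}$ is irreducible, so Schur's lemma applied to $\Phi^{-1}\Phi^\sigma$ gives $\Phi^\sigma = c\Phi$ for some $c \in \mathbb{C}^{*}$. Applying $(-)^\sigma$ twice produces the identity on $\Phi$, and the conjugate-linearity of $\sigma_2$ converts this to $|c|^{2} = 1$; write $c = e^{i\theta}$.

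The final step is a rescaling: setting $\Psi := e^{i\theta/2}\Phi$ one computes $\Psi^\sigma = e^{-i\theta/2}\Phi^\sigma = \Psi$, i.e.\ $\sigma_2 \circ \Psi = \Psi \circ \sigma_1$. Hence $\Psi$ carries the $\sigma_1$-fixed subspace $W_1$ into the $\sigma_2$-fixed subspace $W_2$, and the same argument for $\Psi^{-1}$ shows that $\Psi|_{W_1} : W_1 \to W_2$ is an $\mathbb{R}$-linear bijection; it is $T^{\mathbb{R}}$-linear because $T^{\mathbb{R}} \subseteq T^{\mathbb{C}}$ and $\Psi$ is $T^{\mathbb{C}}$-linear. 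The main obstacle this addresses is that an abstract $T^{\mathbb{C}}$-isomorphism $\Phi$ need not preserve the real subspaces; the key inputs are that $T^{\mathbb{C}}$ is defined by real matrices (so conjugation intertwines its action) and the irreducibility provided by Theorem~\ref{thm:complexification} (so Schur's lemma reduces the discrepancy between $\Phi$ and $\Phi^\sigma$ to a unit scalar, which can be absorbed via a square root).
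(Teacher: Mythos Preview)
Your proof is correct, but the paper takes a somewhat different (and slightly more elementary) route for the reverse direction. Given a $T^{\mathbb{C}}$-isomorphism $\eta:W_1^{\mathbb{C}}\to W_2^{\mathbb{C}}$, the paper simply writes, for $w\in W_1$, $\eta(w)=\sigma_1(w)+\boldsymbol{i}\,\sigma_2(w)$ with $\sigma_1,\sigma_2:W_1\to W_2$ $\mathbb{R}$-linear, checks that each $\sigma_j$ is $T^{\mathbb{R}}$-linear (because the generators of $T^{\mathbb{C}}$ are real), observes that not both can vanish, and then invokes the irreducibility of $W_1$ and $W_2$ over $\mathbb{R}$ to conclude that the nonzero $\sigma_j$ is an isomorphism. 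In particular, the paper's argument never appeals to Theorem~\ref{thm:complexification} or to Schur's lemma over $\mathbb{C}$; it only uses irreducibility of the original real modules. Your Galois-descent argument is equally valid and is the standard way to show that an isomorphism over an extension field descends (the key point being that Schur over $\mathbb{C}$ forces $\Phi^\sigma=c\Phi$ with $|c|=1$, so a square root of $c$ corrects $\Phi$ to a real isomorphism). Note that in your framework the paper's maps $\sigma_1,\sigma_2$ are essentially the restrictions of $\tfrac{1}{2}(\Phi+\Phi^\sigma)$ and $\tfrac{1}{2\boldsymbol{i}}(\Phi-\Phi^\sigma)$ to $W_1$; the paper bypasses the rescaling step by working with these combinations directly. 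Your approach has the advantage of generalizing cleanly to other Galois extensions, while the paper's approach avoids the dependence on Theorem~\ref{thm:complexification}.
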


Our fourth main result concerns the Wedderburn decompositions of $T^{\mathbb{C}}$ and $T^{\mathbb{R}}$.

\begin{theorem}\label{thm:Wedderburn}
We refer to the Wedderburn decompositions of $T^{\mathbb{C}}$ in \eqref{eq:Wed decomp over C} and $T^\mathbb{R}$ in \eqref{eq:Wed decomp over R}.
Then 
$$
	\ell = h, 
	\qquad m_i = n_i, 
	\qquad
	\mathbb{D}_i = \mathbb{R} \qquad (1 \le i \le \ell).
$$
\end{theorem}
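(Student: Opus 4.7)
The plan is to derive Theorem~\ref{thm:Wedderburn} by applying complexification directly to the real Wedderburn decomposition \eqref{eq:Wed decomp over R} and invoking Theorems~\ref{thm:complexification} and~\ref{thm:WC iso} to show that no splitting or doubling of simple components takes place. The first preparatory observation is that since $T^\mathbb{R}$ is generated by matrices with real entries (namely $A$ and $E_0^*,\ldots,E_D^*$), any $\mathbb{R}$-basis of $T^\mathbb{R}$ is $\mathbb{C}$-linearly independent in $\Mat_X(\mathbb{C})$, and hence a $\mathbb{C}$-basis of $T^\mathbb{C}$. This yields a canonical $\mathbb{C}$-algebra isomorphism $T^\mathbb{C}\cong T^\mathbb{R}\otimes_\mathbb{R}\mathbb{C}$, which serves as the bridge between the two Wedderburn decompositions.

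Next I would show that $\mathbb{D}_i=\mathbb{R}$ for every $i$. Fix $1\le i\le \ell$ and let $W_i$ denote an irreducible $T^\mathbb{R}$-module corresponding to the $i$-th simple component $\Mat_{m_i}(\mathbb{D}_i)$ of \eqref{eq:Wed decomp over R}. Schur's lemma over $\mathbb{R}$ gives $\End_{T^\mathbb{R}}(W_i)\cong \mathbb{D}_i^{\mathrm{op}}$. Using the standard identification $\End_{T^\mathbb{C}}(W_i^\mathbb{C})\cong \End_{T^\mathbb{R}}(W_i)\otimes_\mathbb{R}\mathbb{C}$ coming from the scalar extension, one obtains $\End_{T^\mathbb{C}}(W_i^\mathbb{C})\cong \mathbb{D}_i^{\mathrm{op}}\otimes_\mathbb{R}\mathbb{C}$. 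By Theorem~\ref{thm:complexification}, the module $W_i^\mathbb{C}$ is irreducible over $\mathbb{C}$, so Schur's lemma over $\mathbb{C}$ forces this endomorphism algebra to equal $\mathbb{C}$. Since $\mathbb{R}\otimes_\mathbb{R}\mathbb{C}\cong\mathbb{C}$, $\mathbb{C}\otimes_\mathbb{R}\mathbb{C}\cong \mathbb{C}\oplus\mathbb{C}$, and $\mathbb{H}\otimes_\mathbb{R}\mathbb{C}\cong \Mat_2(\mathbb{C})$ have $\mathbb{R}$-dimensions $2$, $4$, and $8$ respectively, only the case $\mathbb{D}_i=\mathbb{R}$ is compatible with $\End_{T^\mathbb{C}}(W_i^\mathbb{C})\cong\mathbb{C}$.

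With $\mathbb{D}_i=\mathbb{R}$ in hand for every $i$, I apply $\otimes_\mathbb{R}\mathbb{C}$ to \eqref{eq:Wed decomp over R} and obtain
$$
T^\mathbb{C}\;\cong\;T^\mathbb{R}\otimes_\mathbb{R}\mathbb{C}\;\cong\;\bigoplus_{i=1}^{\ell}\Mat_{m_i}(\mathbb{D}_i\otimes_\mathbb{R}\mathbb{C})\;=\;\bigoplus_{i=1}^{\ell}\Mat_{m_i}(\mathbb{C}).
$$
Comparing this with \eqref{eq:Wed decomp over C} and invoking the uniqueness of the Wedderburn decomposition of a semisimple $\mathbb{C}$-algebra (up to permutation of simple components) yields $\ell=h$ and $m_i=n_i$ after a suitable relabeling. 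Theorem~\ref{thm:WC iso} makes this relabeling canonical, since it guarantees that complexification $W_i\mapsto W_i^\mathbb{C}$ is a bijection between isomorphism classes of irreducible $T^\mathbb{R}$-modules and irreducible $T^\mathbb{C}$-modules, and each class on the right is then pinned to the simple component of $T^\mathbb{C}$ coming from the corresponding simple component of $T^\mathbb{R}$.

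The main delicate point will be the functorial identification $\End_{T^\mathbb{C}}(W_i^\mathbb{C})\cong \End_{T^\mathbb{R}}(W_i)\otimes_\mathbb{R}\mathbb{C}$, on which the exclusion of $\mathbb{D}_i\in\{\mathbb{C},\mathbb{H}\}$ rests. This is standard for finite-dimensional modules under base extension, but it should be explicitly checked by writing down the natural map $\varphi\otimes z\mapsto z\cdot(\varphi\otimes\mathrm{id})$, verifying it is a $\mathbb{C}$-algebra homomorphism, and matching $\mathbb{C}$-dimensions via the splitting $W_i^\mathbb{C}=W_i\oplus iW_i$. Once this ingredient is secured, the remainder of the argument is a short bookkeeping of Wedderburn data.
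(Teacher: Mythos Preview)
Your argument is correct, and the second half (complexifying the real Wedderburn decomposition and comparing) is exactly what the paper does. The two proofs differ only in how they establish $\mathbb{D}_i=\mathbb{R}$. The paper proves a short lemma (Lemma~\ref{lem:End_T(W)=R}) computing $\End_{T^\mathbb{R}}(W_i)\cong\mathbb{R}$ directly from sharpness: any $\sigma\in\End_{T^\mathbb{R}}(W_i)$ preserves the one-dimensional space $E_r^*W_i$ and is determined there, so the restriction map to $\End(E_r^*W_i)\cong\mathbb{R}$ is an isomorphism. You instead invoke Theorem~\ref{thm:complexification} to see that $W_i^\mathbb{C}$ is irreducible, then use the base-change identity $\End_{T^\mathbb{C}}(W_i^\mathbb{C})\cong\End_{T^\mathbb{R}}(W_i)\otimes_\mathbb{R}\mathbb{C}$ and the well-known computations of $\mathbb{D}\otimes_\mathbb{R}\mathbb{C}$ to rule out $\mathbb{C}$ and $\mathbb{H}$. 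Both routes ultimately rest on sharpness; the paper's is a touch more elementary (no tensor identities needed), while yours packages the content into standard base-change facts and makes the role of Theorem~\ref{thm:complexification} more visible. Your closing remark that Theorem~\ref{thm:WC iso} makes the bijection between simple components canonical is a nice addendum not stated explicitly in the paper.
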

\noindent
The proofs of Theorems~\ref{thm:complexification}, \ref{thm:WC iso}, and \ref{thm:Wedderburn} appear in Section~\ref{sec:pf thm2}.

\smallskip
Our last main result is motivated by \cite{Ter1993SuzukiNote} and stated below.
\begin{theorem}\label{thm:four algebras}
For either $\mathbb{F}=\mathbb{R}$ or $\mathbb{F}=\mathbb{C}$ the following algebras
\begin{equation}\label{eq: four algebras}
	E_1^*TE_1^*, \qquad
	E_1TE_1, \qquad
	E_D^* T E_D^*, \qquad
	E_D T E_D
\end{equation}
are commutative.
Moreover, every element of these algebras is a symmetric matrix.
\end{theorem}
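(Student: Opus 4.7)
My overall strategy is to combine sharpness (Theorem~\ref{mainthm} and its $\mathbb{C}$-analogue due to Nomura--Terwilliger) with the uniform Wedderburn decomposition (Theorem~\ref{thm:Wedderburn}) to reduce the problem to the bound $\dim_{\mathbb{F}}(eW)\le 1$ for every irreducible $T$-module $W$ and every $e$ appearing in~\eqref{eq: four algebras}; both commutativity and symmetry of $eTe$ then follow from elementary Wedderburn theory. I describe the argument for $e=E_1^*$ in detail; the case $e=E_D^*$ is analogous using shape symmetry, while $e=E_1,E_D$ are dual via swapping $A\leftrightarrow A^*$ and replacing the endpoint $r$ by the dual endpoint $s$.

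\textbf{Step 1 (Dimension bound).} Let $W$ be an irreducible $T$-module with endpoint $r$, diameter $d$, and shape $\{\rho_i\}_{i=0}^{d}$. If $E_1^*W\neq 0$ then $r\le 1\le r+d$, so $r\in\{0,1\}$. When $r=1$, sharpness yields $\dim(E_1^*W)=\rho_0=1$. When $r=0$, the space $E_0^*V=\mathbb{F}\hat{x}$ is one-dimensional, forcing $W$ to be the primary module $T\hat{x}=\spanned\{E_i^*\mathbf{1}\}_{i=0}^{D}$, in which $\dim(E_i^*W)=1$ for all $i$; in particular $\dim(E_1^*W)=1$. For $e=E_D^*$, the relation $E_D^*W\neq 0$ combined with the general bound $r+d\le D$ forces $r+d=D$, and shape symmetry then gives $\dim(E_D^*W)=\rho_d=\rho_0=1$.

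\textbf{Step 2 (Commutativity and symmetry).} By Theorem~\ref{thm:Wedderburn}, over either field $\mathbb{F}$ we have $T\cong\bigoplus_{i=1}^{h}\End_{\mathbb{F}}(W_i)$, where $W_1,\ldots,W_h$ is a list of representatives of isomorphism classes of irreducible $T$-modules. Compressing by $e$ yields $eTe\cong\bigoplus_{i=1}^{h}\End_{\mathbb{F}}(eW_i)$, and by Step~1 each summand is $0$ or $\mathbb{F}$. Hence $eTe$ is a direct sum of copies of $\mathbb{F}$ and is commutative. For symmetry, let $\pi_i\in T$ denote the central primitive idempotent attached to the class of $W_i$, equivalently the orthogonal projection of $V$ onto the $W_i$-isotypic component. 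Over $\mathbb{R}$, $\pi_i$ is automatically a real symmetric matrix. Over $\mathbb{C}$, Theorem~\ref{thm:Wedderburn} together with Theorem~\ref{thm:complexification} identifies the Wedderburn components of $T^{\mathbb{C}}$ with those of $T^{\mathbb{R}}$ under the natural inclusion $T^{\mathbb{R}}\hookrightarrow T^{\mathbb{C}}$, so $\pi_i\in T^{\mathbb{R}}$ is real; being simultaneously real and Hermitian, it is symmetric. Because $e^\top=e$, the idempotents $\epsilon_i:=e\pi_i e$ satisfy $\epsilon_i^\top=\epsilon_i$; they are the primitive idempotents of $eTe$ and therefore span it over $\mathbb{F}$. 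Consequently every element of $eTe$ is an $\mathbb{F}$-linear combination of symmetric matrices, and hence symmetric.

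\textbf{Main obstacle.} With the preceding machinery in place, the argument is essentially Wedderburn bookkeeping once Step~1 is settled. The most delicate point is the symmetry claim over $\mathbb{F}=\mathbb{C}$: a priori the central idempotents of $T^{\mathbb{C}}$ are only Hermitian, and their realness is exactly what the complexification picture of Theorems~\ref{thm:complexification} and \ref{thm:Wedderburn} supplies. Absent these results, a Wedderburn component of $T^{\mathbb{R}}$ isomorphic to a complex matrix algebra would split upon complexification into two conjugate simple components with non-real central idempotents, which would invalidate the symmetry conclusion.
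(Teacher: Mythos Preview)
Your proof is correct. Step~1 and the commutativity half of Step~2 match the paper's Propositions~\ref{lem:E1*TE1* comm} and~\ref{lem:ED*TED* comm}, which rest on exactly the bound $\dim_{\mathbb{F}}(eW)\le 1$ for every irreducible $T$-module $W$.

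Your symmetry argument, however, takes a genuinely different route. The paper proves symmetry by first showing that each $eTe$ is \emph{generated} by $eMe$ or $eM^*e$ (Lemmas~\ref{lem:E1*TE1*=<E1*ME1*>}, \ref{lem:E1TE1=<E1M*E1>}, \ref{lem:ED*TED*=<ED*MED*>}), whose elements are symmetric by Lemma~\ref{lem: E*AE*sym}; commutativity then forces products of symmetric generators to remain symmetric. Establishing generation takes real work: Lemmas~\ref{lem:W iff mu} and~\ref{lem:W iff mu, E*D version} characterise isomorphism classes of irreducible modules via local eigenvalues, and the paper then constructs explicit single generators ($K$ in~\eqref{def:mat K} and $\Delta$ in~\eqref{eq:Delta}). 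You bypass all of this by invoking Theorems~\ref{thm:complexification} and~\ref{thm:Wedderburn}: the central primitive idempotents $\pi_i$ of $T^{\mathbb{C}}$ lie in $T^{\mathbb{R}}$ and are symmetric, so the nonzero $e\pi_i$ span $eTe$ and inherit symmetry. Your route is shorter and more conceptual, and correctly identifies that the splitting-free complexification (all $\mathbb{D}_i=\mathbb{R}$) is exactly what makes the $\pi_i$ real. The paper's route, on the other hand, yields additional structural information that yours does not recover directly---explicit generators and relations for each of the four algebras (e.g., Lemma~\ref{lem:E*1TE*1gen})---which feed into the examples and open problems in Section~\ref{sec:E*DTE*D}.
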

\noindent
The proof of Theorem \ref{thm:four algebras} appears in Section \ref{sec:E*DTE*D}.

\smallskip
This paper is organized as follows.
In Section~\ref{sec:prelim}, we provide some preliminaries.
In Section~\ref{sec:TDpairs}, we review tridiagonal pairs and tridiagonal systems.
In Sections~\ref{sec:DRGs}--\ref{sec:Q-poly Property}, we discuss distance-regular graphs and related topics, including the Bose-Mesner algebra, the dual Bose-Mesner algebra, the Terwilliger algebra, and the $Q$-polynomial property. 
In Section~\ref{sec:T-mod TDpairs}, we describe the action of the matrices $A, A^*$ on an irreducible $T$-module.
In Section~\ref{sec:pf thm}, we prove Theorem~\ref{mainthm}.
In Section~\ref{sec:pf thm2}, we prove Theorems~\ref{thm:complexification}, \ref{thm:WC iso}, and~\ref{thm:Wedderburn}.
In Section~\ref{sec:E*1TE*1}, we discuss the algebras $E_1^*TE_1^*$ and $E_1TE_1$.
In Section~\ref{sec:E*DTE*D}, we examine the algebras $E_D^*TE_D^*$ and $E_DTE_D$ and prove Theorem~\ref{thm:four algebras}.

\section{Preliminaries}\label{sec:prelim}

In this section, we review some notation and basic concepts.
In this and next section, we assume that $\mathbb{F}$ is an arbitrary field.
All algebras and vector spaces considered in this paper are defined over $\mathbb{F}$.
Let $V$ denote a vector space of finite positive dimension. 
Let $\operatorname{End}(V)$ denote the algebra consisting of the $\mathbb{F}$-linear maps from $V$ to $V$. 
For $A \in \operatorname{End}(V)$ and a subspace $W \subseteq V$, we say that $W$ is an \emph{eigenspace} of $A$ whenever $W$ is nonzero and there exists $\theta \in \mathbb{F}$ such that $W = \{v \in V \mid Av = \theta v\}$.
In this case, $\theta$ is called the \emph{eigenvalue} of $A$ associated with $W$. 
The map $A$ is said to be \emph{diagonalizable} whenever $V$ is spanned by the eigenspaces of $A$.
Assume that $A$ is diagonalizable.
Let $\{V_i\}^d_{i=0}$ denote an ordering of the eigenspaces of $A$ and let $\{\theta_i\}_{i=0}^d$ denote the corresponding ordering of the eigenvalues of $A$.
By linear algebra,
\begin{equation}\label{V:ds}
	V = \sum_{i=0}^d V_i \qquad \qquad (\text{direct sum}).
\end{equation}
For $0 \leq i \leq d$, define $E_i \in \operatorname{End}(V)$ such that 
\begin{equation}\label{def:E_i eq}
	(E_i - I )V_i = 0, \qquad 
	E_iV_j  = 0 \quad  (0 \leq j \leq d, \ j \neq i), 
\end{equation}
where $I$ is the identity element in $\operatorname{End}(V)$.
We refer to $E_i$ as the \emph{primitive idempotent} of $A$ associated with $V_i$.
Observe that
\begin{equation}\label{Vi=EiV}
	V_i = E_iV \qquad (0 \leq i \leq d).
\end{equation}
Moreover,
\begin{align}
	&& & I = \sum_{i=0}^d E_i, \qquad \qquad E_iE_j = \delta_{i,j}E_i && (0 \leq i, j \leq d), && \label{eq(1):I,E_i}\\
	&& & A = \sum^d_{i=0}\theta_iE_i, \qquad \quad E_i = \prod_{\substack{0 \leq j \leq d \\ j \neq i}} \frac{A - \theta_j I}{\theta_i - \theta_j} && (0 \leq i \leq d), && \label{eq(2):I,E_i}\\
	&& & AE_i = E_iA = \theta_iE_i && (0 \leq i \leq d). && \label{eq(3):I,E_i}
\end{align} 
We note that each of $\{E_i\}_{i=0}^d$ and $\{A^i\}_{i=0}^d$ forms a basis for the subalgebra of $\operatorname{End}(V)$ generated by $A$.

\section{Tridiagonal pairs and tridiagonal systems}\label{sec:TDpairs}

In this section, we review the concepts of tridiagonal pairs and tridiagonal systems.

\begin{definition}[{\cite[Definition~1.1]{ITT}}]\label{def1}
Let $V$ denote a vector space of finite positive dimension. 
By a \emph{tridiagonal pair} (or \emph{TD pair}) on $V$, we mean an ordered pair $A, A^*$ of elements in $\operatorname{End}(V)$ that satisfy (i)--(iv) below.
\begin{enumerate}[label=(\roman*),font=\rm,itemsep=0mm]
	\item Each of $A$ and $A^*$ is diagonalizable on $V$. 
	\item There exists an ordering $\{ V_i \}_{i=0}^d$ of the eigenspaces of $A$ such that 
	$$
	A^*V_i \subseteq V_{i-1}+V_i+V_{i+1} \qquad (0 \leq i \leq d),
	$$ 
	where $V_{-1}=V_{d+1}=0$. 
	\item There exists an ordering $\{ V^*_i \}_{i=0}^{\delta}$ of the eigenspaces of $A^*$ such that 
	$$
	AV_i^* \subseteq V^*_{i-1}+V^*_i+V^*_{i+1} \qquad (0 \leq i \leq \delta),
	$$ 
	where $V^*_{-1}=V^*_{\delta+1}=0$. 
	\item There does not exist a subspace $W$ of $V$ such that $W \ne 0$, $W \ne V$, $A W \subseteq W$, and $A^* W \subseteq W$. 
\end{enumerate}
We say the pair $A, A^*$ is \emph{over} $\mathbb{F}$.
\end{definition}

\begin{note}\label{note:A*}
It is common to denote the conjugate transpose of $A$ by $A^*$.  
However, we do not use this notation in this paper.  
In a tridiagonal pair $A, A^*$, the $\mathbb{F}$-linear maps $A$ and $A^*$ are arbitrary subject to conditions (i)–(iv) in Definition~\ref{def1}.
\end{note}

Let  $A, A^*$ be a TD pair on $V$ as in Definition~\ref{def1}.  
By \cite[Lemma 4.5]{ITT} the integers $d$ and $\delta$ from {\rm (ii)}, {\rm (iii)} of Definition~\ref{def1} are equal. 
We call this common value the \emph{diameter} of the TD pair. 
An ordering of the eigenspaces of $A$ (resp. $A^*$) is called \emph{standard} whenever it satisfies \rm{(ii)} (resp. \rm{(iii)}) of Definition \ref{def1}.
Let $\{V_i\}_{i=0}^d$ be a standard ordering of the eigenspaces of $A$.
Then the inverted ordering $\{V_{d-i}\}_{i=0}^d$ is also standard, and no further ordering is standard \cite[Lemma 2.4]{ITT}.
A similar result holds for $A^*$.
An ordering of the primitive idempotents of $A$ (resp. $A^*$) is called \emph{standard} whenever the corresponding ordering of the eigenspaces of $A$ (resp. $A^*$) is standard.

\smallskip
When working with a TD pair, it is convenient to use a closely related concept known as a TD system.

\begin{definition}[{\cite[Definition~2.1]{ITT}}]\label{def:TDsystem}
Let $V$ denote a vector space of finite positive dimension. 
By a \emph{tridiagonal system} (or \emph{TD system}) on $V$, we mean a sequence 
\begin{equation}\label{eq: def TDsystem}
	\Phi = ( A; \{ E_i \}_{i=0}^d; A^*; \{ E_i^* \}_{i=0}^{d} )
\end{equation}
that satisfies (i)--(iii) below.
\begin{enumerate}[label=(\roman*),font=\rm,itemsep=0mm]
	\item $A$, $A^*$ is a tridiagonal pair on $V$. 
	\item $\{E_i\}_{i=0}^d$ is a standard ordering of the primitive idempotents of $A$. 
	\item $\{E^*_i\}_{i=0}^d$ is a standard ordering of the primitive idempotents of $A^*$.
\end{enumerate}
\end{definition}

Let $\Phi$ be a TD system on $V$ as in Definition \ref{def:TDsystem}.
For $0 \leq i \leq d$, define 
$$
	\rho_i = \dim(E_i^*V).
$$
By construction, $\rho_i \neq 0$ for all $0 \leq i \leq d$. 
By \cite[Corollaries 5.7, 6.6]{ITT},
\begin{align}
	& \rho_i = \dim(E_iV) && \hspace{-3cm} (0\leq i \leq d), \label{eq(1): shape duality} \\
	& \rho_i = \rho_{d-i} && \hspace{-3cm} (0\leq i \leq d), \label{eq(2): shape sym} \\
	& \rho_{i-1} \leq \rho_i && \hspace{-3cm} (0\leq i \leq d/2). \label{eq(3): shape unim}
\end{align}
The sequence $\{ \rho_i\}_{i=0}^d$ is called the \emph{shape} of $\Phi$. 
By the shape of the TD pair $A, A^*$, we mean the shape of $\Phi$.
We mention a special case of the shape of a TD pair.
A TD pair with shape $(1,1,\ldots, 1)$ is called a \emph{Leonard pair}; see \cite{Ter2001LAA}.

\begin{definition}[{\cite[Definition 1.5]{NT2008LAA2}}]\label{def:sharp}
A TD pair $A, A^*$ is said to be \emph{sharp} whenever $\rho_0 = 1$, where $\{ \rho_i\}^d_{i=0}$ is the shape of $A, A^*$.
\end{definition}

\begin{lemma}[{\cite[Theorem 1.3]{NT2008LAA}}]\label{A,A*:sharp}
A TD pair over an algebraically closed field is sharp.
\end{lemma}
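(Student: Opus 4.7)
The plan is to establish sharpness over an algebraically closed field $\mathbb{F}$ by contradiction: suppose $\rho_0 \geq 2$, and construct a proper nonzero subspace of $V$ closed under both $A$ and $A^*$, in violation of Definition~\ref{def1}(iv). The idea is to combine the combinatorial rigidity of a TD pair -- encoded in its \emph{first split decomposition} -- with the fact that over $\mathbb{F}$ every operator on a finite-dimensional space has an eigenvector.

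First, I would introduce the first split decomposition of $V$ associated to the TD system $\Phi$ in \eqref{eq: def TDsystem}, namely
$$V = \bigoplus_{i=0}^{d} U_i, \qquad U_i := \Big(\sum_{h=0}^{i}E_h^*V\Big) \cap \Big(\sum_{h=i}^{d}E_hV\Big).$$
By the Ito--Tanabe--Terwilliger analysis of TD systems one has $\dim U_i = \rho_i$; in particular $U_0 = E_0^* V$ has dimension $\rho_0$. Moreover, for the standard orderings $\{\theta_i\}_{i=0}^d$, $\{\theta_i^*\}_{i=0}^d$ of the eigenvalues of $A$, $A^*$, the operators act bidiagonally on this grading: $(A - \theta_{d-i}I)U_i \subseteq U_{i+1}$ and $(A^*-\theta_i^*I)U_i \subseteq U_{i-1}$, with the former vanishing on $U_d$ and the latter on $U_0$.

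Next, I would produce an operator on $U_0$ whose eigenvectors yield small invariant subspaces. A natural candidate is the composite
$$\Psi := (A^* - \theta_1^* I)(A^* - \theta_2^* I)\cdots(A^* - \theta_d^* I)\,(A - \theta_{d-1}I)(A - \theta_{d-2}I)\cdots(A - \theta_0 I),$$
which sends $U_0$ to $U_d$ via the raising chain and back to $U_0$ via the lowering chain, so $\Psi(U_0) \subseteq U_0$. Algebraic closure of $\mathbb{F}$ delivers an eigenvector $u \in U_0 \setminus \{0\}$ of $\Psi|_{U_0}$. The target is to show that the $\{A,A^*\}$-invariant subspace $W := \langle A, A^*\rangle u \subseteq V$ satisfies $W \cap U_0 = \mathbb{F} u$, so that $\dim(W \cap U_0) = 1 < \rho_0 = \dim U_0$ and hence $W$ is a proper nonzero $\{A,A^*\}$-invariant subspace of $V$, contradicting Definition~\ref{def1}(iv).

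The main obstacle, and the real content of the Nomura--Terwilliger argument, is precisely the verification that $W \cap U_0 = \mathbb{F} u$. Because $A$ and $A^*$ each contain ``diagonal'' components in addition to the raising/lowering pieces used to build $\Psi$, a priori repeated application of $A$ and $A^*$ to $u$ could return new vectors in $U_0$ linearly independent of $u$. Ruling this out requires a careful bookkeeping of how the split summands $U_i$ are reached from $u$ under the algebra $\langle A, A^*\rangle$, phrased in terms of the split sequence and parameter array of the TD pair. Once this combinatorial control is in place the contradiction is immediate and the lemma follows.
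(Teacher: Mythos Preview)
Your outline has the right overall shape---produce an eigenvector in $U_0$, generate a submodule, contradict irreducibility---but the step you yourself flag as ``the main obstacle'' is not a bookkeeping detail: it is the entire content of the proof, and your plan does not supply it. A single eigenvector $u$ of the one operator $\Psi$ gives no control over the other ways of returning to $U_0$: for each $0 \le j \le d$ the ``up $j$, down $j$'' composite
\[
(A^*-\theta_1^*I)\cdots(A^*-\theta_j^*I)\,(A-\theta_{j-1}I)\cdots(A-\theta_0I)
\]
also maps $U_0$ to $U_0$, and there is no a priori reason these preserve $\mathbb{F}u$ unless $u$ is a \emph{common} eigenvector for all of them. (Incidentally, your raising relation should read $(A-\theta_iI)U_i\subseteq U_{i+1}$, not $\theta_{d-i}$, for the split decomposition you wrote down.)

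The Nomura--Terwilliger argument in \cite{NT2008LAA} sidesteps this difficulty by a different idea. Their Theorem~2.6 (recorded in this paper as Lemma~\ref{NT:Thm2.6}) shows that the corner algebra $E_0^*\langle A,A^*\rangle E_0^*$ is generated by $\{E_0^*A^jE_0^*\}_{j\ge 0}$ and that these generators \emph{mutually commute}. Over an algebraically closed field one then chooses $u\in U_0$ to be a common eigenvector for the whole corner algebra, after which
\[
E_0^*\langle A,A^*\rangle u \;=\; E_0^*\langle A,A^*\rangle E_0^* u \;\subseteq\; \mathbb{F}u
\]
is automatic, and irreducibility forces $\dim U_0=1$. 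This is exactly the mechanism used later in the proof of Theorem~\ref{mainthm}, where symmetry of the generators over $\mathbb{R}$ replaces algebraic closure as the source of the common eigenvector. The missing ingredient in your plan is precisely this commutativity of $E_0^*\langle A,A^*\rangle E_0^*$; the ``careful bookkeeping'' you allude to would, if carried out, amount to reproving it.
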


The sharp TD pairs were studied in \cite{NT2008LAA2, NT2008LAA}, and they were classified up to isomorphism \cite{INP2011LAA}.
For more information on TD pairs and TD systems, refer to \cite{ITT, INP2011LAA, IT2004JPAA, IT2007RJ, IT2007LAA, IT2010JAA, NT2007LAA, NT2008LAA2, NT2008LAA}.

\section{Distance-regular graphs}\label{sec:DRGs}
We now turn our attention to distance-regular graphs.
In this section, we review some definitions and preliminaries concerning distance-regular graphs.
In the following sections, we discuss the Bose-Mesner algebra, the dual Bose-Mesner algebra, and the Terwilliger algebra associated with these graphs.
For further details on distance-regular graphs, we refer the reader to \cite{BI1984, BCN, DKT2016, Ter2024}.

\smallskip
Let $\Gamma$ denote a finite, undirected, connected graph, without loops or multiple edges, with vertex set $X$.
Let $\partial$ denote the path-length distance function for $\Gamma$.
Two vertices $y,z \in X$ are said to be \emph{adjacent} whenever $\partial(y,z)=1$.
Define $D = \max\{ \partial(y,z) \mid y,z \in X\}$.
We call $D$ the \emph{diameter} of $\Gamma$.
For $y \in X$ and an integer $i \geq 0$, define 
\begin{equation*}
	\Gamma_i(y) = \{ z \in X \mid \partial(y,z) = i \}.
\end{equation*}
We abbreviate $\Gamma(y) = \Gamma_1(y)$.
For an integer $k \geq 0 $, the graph $\Gamma$ is said to be \emph{regular with valency} $k$ whenever every vertex is adjacent to exactly $k$ vertices.
The graph $\Gamma$ is said to be \emph{distance-regular} whenever, for all integers $0 \leq h, i, j \leq D$ and all vertices $y, z \in X$ with $\partial(y, z) = h$, the number  
\begin{equation}\label{eq:int num}
	p_{i,j}^h = \left| \Gamma_i(y) \cap \Gamma_j(z) \right|  
\end{equation}  
is independent of the choice of $y$ and $z$, and depends only on $h, i, j$.  
The parameter $p^h_{i,j}$ is called an \emph{intersection number} of $\Gamma$.

\smallskip
For the rest of this paper, assume that $\Gamma$ is distance-regular with $D \geq 3$.
By construction, $p^h_{i,j} = p^h_{j,i}$ for all $0 \leq h, i, j \leq D$. 
By the triangle inequality, for $0 \leq h,i,j \leq D$ we have
\begin{enumerate}[label=(\roman*),font=\rm]
\setlength\itemsep{0pt}
	\item $p^h_{i,j} = 0$ if one of $h,i,j$ is greater than the sum of the other two;
	\item $p^h_{i,j} \neq 0$ if one of $h,i,j$ is equal to the sum of the other two.
\end{enumerate}
We abbreviate
\begin{equation}
	c_i = p^i_{1,i-1} \ (1 \leq i \leq D), \qquad 
	a_i = p^i_{1,i} \ (0 \leq i \leq D), \qquad 
	b_i = p^i_{1,i+1} \ (0 \leq i \leq D-1). \label{eq:int num abc}
\end{equation}
Note that $a_0 = 0$, $c_1 = 1$, and 
\begin{equation*}
	c_i > 0 \quad (1 \leq i \leq D), \qquad \qquad b_i > 0 \quad (0 \leq i \leq D-1). 
\end{equation*}
Also, abbreviate
\begin{equation}\label{def:ith_valency}
	k_i = p^0_{i,i} \quad (0 \leq i \leq D).
\end{equation}
Note that $k_0 = 1$ and $k_i = |\Gamma_i(y)|$ for all $y \in X$.
The graph $\Gamma$ is regular with valency $k =k_1= b_0$.
Moreover,
\begin{equation*}
	c_i + a_i + b_i = k \qquad \qquad (0 \leq i \leq D),
\end{equation*}
where $c_0 := 0$ and $b_D := 0$.

\section{The Bose-Mesner algebra}\label{sec:BMalg}
We continue our discussion of the distance-regular graph $\Gamma$. 
In this section, we recall the Bose-Mesner algebra.
For the rest of this paper, we adopt the following conventions.
Assume $\mathbb{F}=\mathbb{R}$ or $\mathbb{F}=\mathbb{C}$.
Let $\Mat_X(\mathbb{F})$ denote the matrix algebra consisting of the matrices with rows and columns indexed by $X$ and all entries in $\mathbb{F}$.
Let $I \in \Mat_X(\mathbb{F})$ denote the identity matrix.
Let $\mathbb{F}^X$ denote the vector space consisting of the column vectors with coordinates indexed by $X$ and all entries in $\mathbb{F}$.  
The algebra $\Mat_X(\mathbb{F})$ acts on $\mathbb{F}^X$ by left multiplication.
We refer to $\mathbb{F}^X$ as the \emph{standard module}. 
For notational convenience, we abbreviate $V = \mathbb{F}^X$.
We endow $V$ with an inner product defined by
\begin{equation*}
	\langle u, v \rangle = \overline{u}^\top {v}  \qquad  (u, v \in V),
\end{equation*}
where $\top$ denotes transpose and $-$ denotes complex conjugation.
Note that $\langle Bu, v \rangle = \langle u, \overline{B}^\top v \rangle$ for $u,v \in V$ and $B \in \Mat_X(\mathbb{F})$.
For $y \in X$, let $\hat{y}$ denote the vector in $V$ with a $1$ in the $y$-coordinate and $0$ in all other coordinates. 
We call $\hat{y}$ the \emph{characteristic vector} of $y$.
Note that the set $\{\hat{y} \mid y \in X\}$ forms an orthonormal basis for $V$.  
We denote $\mathds{1} = \sum_{y \in X} \hat{y}$ and let $\mathbb{J} \in \Mat_X(\mathbb{F})$ denote the matrix that has all entries $1$.

\smallskip
We recall the Bose-Mesner algebra of $\Gamma$.
For $0 \leq i \leq D$, define a matrix $A_i \in \Mat_X(\mathbb{F})$ with $(y,z)$-entry
\begin{equation}\label{def:dis mat}
	(A_i)_{y,z} = 
	\begin{cases} 
		1 & \text{ if } \quad \partial(y, z) = i, \\ 
		0 & \text{ if } \quad  \partial(y, z) \neq i 
	\end{cases} 
	\qquad (y, z \in X).
\end{equation}  
We call $A_i$ the \emph{$i$-th distance matrix} of $\Gamma$.
We observe that
\begin{equation}\label{all-one mat}
	A_0 = I, 
	\qquad\sum_{i=0}^D A_i = \mathbb{J},
	\qquad \overline{A}_i = A_i \ (0 \leq i \leq D),
	\qquad A_i^{\top} = A_i \ (0 \leq i \leq D).
\end{equation}
Moreover, 
\begin{equation}\label{eq(2):Ai properties}
	A_i A_j = \sum_{h=0}^D p^h_{i,j} A_h \qquad (0 \leq i, j \leq D).
\end{equation}
By \eqref{def:dis mat}--\eqref{eq(2):Ai properties}, the matrices $\{A_i\}_{i=0}^D$ form a basis for a commutative subalgebra $M$ of $\Mat_X(\mathbb{F})$.
We call $M$ the \emph{Bose-Mesner algebra} of $\Gamma$.
We abbreviate $A = A_1$ and call this the \emph{adjacency matrix} of $\Gamma$.  
By \cite[Corollary 3.4]{Ter2024}, the matrix $A$ generates $M$.
Moreover, since $A$ is real symmetric, it is diagonalizable and has all real eigenvalues.
Consequently, $A$ has exactly $D+1$ mutually distinct eigenvalues, denoted by $\theta_0, \theta_1, \ldots, \theta_D$.
We call $\{ \theta_i \}_{i=0}^D$ the \emph{eigenvalues of $\Gamma$}.
The valency $k$ is the maximal eigenvalue of $\Gamma$ \cite[Proposition 3.1]{Biggs}.
We set $\theta_0 := k$.
For $0 \leq i \leq D$, let $V_i$ denote the eigenspace of $A$ corresponding to $\theta_i$.
Let $E_i \in \Mat_X(\mathbb{F})$ denote the primitive idempotent of $A$ associated with $V_i$.
By linear algebra, 
\begin{equation}\label{Ei transpose}
	\overline{E}_i = E_i, \qquad
	E_i^\top = E_i \qquad (0\leq i \leq D).
\end{equation}
By \eqref{V:ds}, \eqref{Vi=EiV}, we have
\begin{equation}\label{ods:EiV}
	\qquad V = \sum_{i=0}^D E_iV \qquad \qquad \text{(orthogonal direct sum)}.
\end{equation}
Observe that $E_0V = \operatorname{Span}\{\mathds{1}\}$ and thus 
\begin{equation}\label{E_0=|X|^-1J}
	E_0 = |X|^{-1}\mathbb{J}.
\end{equation}
We call $\{E_i\}_{i=0}^D$ the \emph{primitive idempotents} of $\Gamma$. 
By the comment below \eqref{eq(3):I,E_i}, the matrices $\{E_i\}_{i=0}^D$ form a basis for $M$.
For $0 \leq i \leq D$, let $m_i$ denote the dimension of $E_iV$. 
Observe that $m_i = \operatorname{rank}(E_i)$.

\smallskip
We finish this section with a comment.
Let $\circ$ denote the entrywise multiplication in $\Mat_X(\mathbb{F})$.
Observe that $A_i \circ A_j = \delta_{i,j}A_i$ for $0 \leq i,j \leq D$.
Therefore, $M$ is closed under $\circ$.
Since $\{E_i\}_{i=0}^D$ is a basis for $M$, there exist scalars $q^h_{i,j} \in \mathbb{F}$ $(0 \leq h, i, j \leq D)$ such that
\begin{equation}\label{def:Krein para}
	E_i \circ E_j = |X|^{-1} \sum_{h=0}^D q^h_{i,j} E_h \qquad \qquad (0\leq i, j \leq D).
\end{equation}
The scalars $q^h_{i,j}$ are called the \emph{Krein parameters} of $\Gamma$.
By construction, $q^h_{i,j} = q^h_{j,i}$ for $0 \leq h, i, j \leq D$.
The Krein parameters are real and nonnegative \cite[p.~69]{BI1984}.
They are used to define the $Q$-polynomial property of a distance-regular graph, which will be discussed in Section \ref{sec:Q-poly Property}.

\section{The dual Bose-Mesner algebra}
We continue to discuss the distance-regular graph $\Gamma$. 
In this section, we recall the dual Bose-Mesner algebras of $\Gamma$.
Fix a vertex $x \in X$.
We view $x$ as a ``base vertex.''
For $0 \leq i \leq D$, let $E_i^* = E_i^*(x)$ denote the diagonal matrix in $\Mat_X(\mathbb{F})$ with $(y,y)$-entry
\begin{equation}\label{def:Ei* matrix}
	(E_i^*)_{y,y} =
	\begin{cases}
	1 & \text{ if } \quad \partial(x,y) = i,  \\
	0 & \text{ if } \quad \partial(x,y) \neq i 
	\end{cases}
	\qquad (y \in X).
\end{equation}
We call $E_i^*$ the \emph{$i$-th dual primitive idempotent of $\Gamma$ with respect to $x$}.
Observe that
\begin{align}
	\sum_{i=0}^D E^*_i = I, \qquad \quad
	& \overline{E_i^*} = E_i^* \ (0 \leq i \leq D), \qquad E_i^{*\top} = E_i^* \ (0 \leq i \leq D), \label{eq:dprim (1)}\\
	& E_i^* E_j^* = \delta_{i,j} E_i^* \ (0 \leq i, j \leq D). \label{eq:dprim (2)}
\end{align}
By \eqref{def:Ei* matrix}--\eqref{eq:dprim (2)}, the matrices $\{E_i^*\}_{i=0}^D$ form a basis for a commutative subalgebra $M^* = M^*(x)$ of $\Mat_X(\mathbb{F})$.
We call $M^*$ the \emph{dual Bose-Mesner algebra of $\Gamma$ with respect to $x$} \cite[p.~378]{Ter1992JAC}.  
By \eqref{def:Ei* matrix}, we find that for $0 \leq i \leq D$
\begin{equation}\label{eq:Ei*V}
	E_i^*V = \operatorname{Span} \{\hat{y} \mid y \in X, \ \partial(x,y)=i \}.
\end{equation}
We call $E_i^*V$ the \emph{$i$-th subconstituent of $\Gamma$ with respect to $x$}.
Observe that $\dim(E_i^*V) = k_i$, where $k_i$ is from \eqref{def:ith_valency}.  
Further, observe that $E_0^*V = \mathbb{F}\hat{x}$.  
By \eqref{eq:Ei*V},
\begin{equation}\label{ods:E*iV}
	V = \sum_{i=0}^D E_i^*V \qquad \text{(orthogonal direct sum)}.
\end{equation} 
By construction, the adjacency matrix $A$ satisfies
\begin{equation}\label{eq:A-action}
	AE_i^*V \subseteq E_{i-1}^*V + E_i^*V + E_{i+1}^*V \qquad (0 \leq i \leq D),
\end{equation}
where $E_{-1}^*=0$ and $E_{D+1}^*=0$.

\smallskip
Next, we recall the dual distance matrices of $\Gamma$.
For $0 \leq i \leq D$, let $A_i^* = A_i^*(x)$ denote the diagonal matrix in $\Mat_X(\mathbb{F})$ with $(y,y,)$-entry
\begin{equation}\label{def:Ai* matrix}
	(A_i^*)_{y,y} = |X| (E_i)_{x,y} \qquad (y \in X).
\end{equation}
We call $A_i^*$ the \emph{i-th dual distance matrix of $\Gamma$ with respect to $x$}.
By \cite[Lemma 5.9]{Ter2024}, we have
\begin{equation}\label{eq: dual mats}
	A_0^* = I, 
	\qquad\sum_{i=0}^D A_i^* = |X|E_0^*.
	\qquad \overline{A_i^*} = A_i^* \ (0 \leq i \leq D),
	\qquad A_i^{*\top} = A_i^* \ (0 \leq i \leq D).
\end{equation}
Moreover, 
\begin{equation}\label{eq:A*iA*j}
	A_i^*A_j^* = \sum_{h=0}^D q^h_{i,j} A_h^* \qquad (0 \leq i, j \leq D).
\end{equation}
By \cite[Lemma 5.8]{Ter2024}, the matrices $\{A_i^*\}_{i=0}^D$ form a basis for $M^*$.

\section{The Terwilliger algebra}\label{sec:T-alg}
We continue our discussion of the distance-regular graph $\Gamma$.
Recall the base vertex $x \in X$.
Recall the Bose-Mesner algebra $M$ and the dual Bose-Mesner algebra $M^*=M^*(x)$.
Let $T=T(x)$ denote the subalgebra of $\Mat_X(\mathbb{F})$ generated by $M$ and $M^*$.
The algebra $T$ is called the \emph{Terwilliger algebra of $\Gamma$ with respect to $x$} \cite[Definition 3.3]{Ter1992JAC}.
Note that $T$ is finite-dimensional and noncommutative.
By \cite[Lemma~3.2]{Ter1992JAC}, the following relations hold in $T$. 
For $0 \leq h,i,j \leq D$,
\begin{align}
	E_i^*A_hE_j^* = 0 \quad & \text{if and only if} \quad p^h_{i,j} = 0, \label{T rel(1)}\\
	E_iA_h^*E_j = 0 \quad & \text{if and only if} \quad q^h_{i,j} = 0. \label{T rel(2)}
\end{align}
By construction, each of $M$ and $M^*$ is closed under the conjugate-transpose map.
Therefore, $T$ is also closed under the conjugate-transpose map.
In particular, $T$ is semisimple \cite[Lemma~3.4]{Ter1992JAC}.

\smallskip
Next, we recall the $T$-modules.
By a \emph{$T$-module}, we mean a subspace $W$ of $V$ such that $TW \subseteq W$.
For example, $V$ is a $T$-module.
A $T$-module $W$ is \emph{irreducible} whenever $W \ne 0$ and $W$ does not contain a $T$-module other than $0$ and $W$. 
Let $W$ denote a $T$-module and let $U$ denote a $T$-module contained in $W$.
Since $T$ is closed under the conjugate-transpose map, the orthogonal complement of $U$ in $W$ is also a $T$-module.
Consequently, $W$ decomposes into an orthogonal direct sum of irreducible $T$-modules.
In particular, $V$ is an orthogonal direct sum of irreducible $T$-modules.

\begin{example}[{\cite[Lemma 3.6]{Ter1992JAC}}]
Recall the vector $\mathds{1} = \sum_{y \in X} \hat{y}$.
This vector satisfies
\begin{equation}
	A_i\hat{x}=E_i^*\mathds{1}, \qquad
	A_i^*\mathds{1}=|X|E_i\hat{x} \qquad
	(0 \leq i \leq D).
\end{equation}
Moreover, $M\hat{x} = M^*\mathds{1}$.
This common subspace is an irreducible $T$-module.
This $T$-module is called \emph{primary}.
\end{example}

We recall the notion of isomorphism for $T$-modules.
Let $W$ and $W'$ denote $T$-modules.
An $\mathbb{F}$-linear map $\sigma : W \to W'$ is said to be \emph{$T$-linear} whenever 
$$
	\sigma(Bw) = B\sigma(w)
$$
for all $B\in T$ and $w\in W$.
By a \emph{$T$-module isomorphism} from $W$ to $W'$, we mean a bijective $T$-linear map from $W$ to $W'$.
We say that $W$ and $W'$ are \emph{isomorphic} whenever there exists a $T$-module isomorphism from $W$ to $W'$.

\smallskip
Let $W$ denote an irreducible $T$-module.
By \eqref{ods:E*iV}, the subspace $W$ is the direct sum of the nonzero spaces among $\{E_i^*W\}_{i=0}^D$.
By the \emph{endpoint} of $W$, we mean $\min \{i \mid 0 \leq i \leq D, \; E_i^*W \neq 0 \}$.
By the \emph{diameter} of $W$, we mean $ | \{i \mid 0 \leq i \leq D, \; E_i^*W \neq 0 \} | -  1$.
Let $r$ denote the endpoint of $W$ and let $d$ denote the diameter of $W$. 
By \cite[Lemma~3.9]{Ter1992JAC}, we have $r+d \leq D$ and $E_i^*W \neq 0$ if and only if $r \leq i \leq r+d$ $(0 \leq i \leq D)$.
By construction, we have
\begin{equation}\label{eq:A act W}
	AE_i^*W \subseteq E_{i-1}^*W + E_i^*W + E_{i+1}^*W \qquad (r \leq i \leq r+d).
\end{equation}
For more detailed information about the Terwilliger algebra, refer to \cite{Ter1992JAC, Ter1993JAC-1, Ter1993JAC-2}.

\section{The $Q$-polynomial property}\label{sec:Q-poly Property}
We continue to discuss the distance-regular graph $\Gamma$.
In this section, we discuss the $Q$-polynomial property.
Recall the primitive idempotents $\{E_i\}_{i=0}^D$ of $\Gamma$ and the Krein parameters $q^h_{i,j}$ from \eqref{def:Krein para}.
The ordering $\{E_i\}_{i=0}^D$ is called \emph{$Q$-polynomial} whenever the following hold for $0 \leq h, i, j \leq D$:
\begin{enumerate}[label=(\roman*),font=\rm]
\item $q^h_{i,j} = 0$ if one of $h,i,j$ is greater than the sum of the other two;
\item $q^h_{i,j} \neq 0$ if one of $h,i,j$ is equal to the sum of the other two.
\end{enumerate}
We abbreviate 
\begin{equation}\label{eq:dual int num abc}
	c^*_i = q^i_{1,i-1} \ (1 \leq i \leq D), \qquad 
	a^*_i = q^i_{1,i} \ (0 \leq i \leq D), \qquad 
	b^*_i = q^i_{1,i+1} \ (0 \leq i \leq D-1). 
\end{equation}
Note that $a_0^*=0$, $c_1^*=1$, and 
\begin{equation*}
	c_i^* > 0 \quad (1 \leq i \leq D), \qquad \qquad b_i^* > 0 \quad (0 \leq i \leq D-1). 
\end{equation*}
The graph $\Gamma$ is said to be \emph{$Q$-polynomial} whenever there exists at least one $Q$-polynomial ordering of the primitive idempotents.

\smallskip
For the rest of this paper, we assume that the ordering $\{ E_i \}_{i=0}^D$ is $Q$-polynomial.
Fix a vertex $x\in X$ and write $M^*=M^*(x)$. 
Recall the bases $\{A_i^*\}_{i=0}^D$ and $\{E_i^*\}_{i=0}^D$ for $M^*$.
We abbreviate $A^*=A_1^*$ and call this the \emph{dual adjacency matrix of $\Gamma$ with respect to $x$}.
By \cite[Lemma~3.11]{Ter1992JAC}, the matrix $A^*$ generates $M^*$.
Since $\{E_i^*\}_{i=0}^D$ is a basis for $M^*$, there exist scalars $\{ \theta_i^* \}_{i=0}^D$ such that
\begin{equation*}
	A^* = \sum_{i=0}^D \theta_i^* E_i^*.
\end{equation*}
By \cite[Lemma~3.11]{Ter1992JAC}, the scalars $\{\theta_i^*\}_{i=0}^D$ are real and mutually distinct.
For $0 \leq i \leq D$, we have $A^*E_i^*=E_i^*A^*=\theta_i^*E_i^*$, and $E_i^*V$ is the eigenspace of $A^*$ corresponding to the eigenvalue $\theta_i^*$.
We call $\{ \theta_i^* \}_{i=0}^D$ the \emph{dual eigenvalues of $\Gamma$}.
Consider the Terwilliger algebra $T=T(x)$.
Note that $A$, $A^*$ generates $T$.
Recall the relations \eqref{T rel(1)}, \eqref{T rel(2)} for $T$.
Setting $h=1$ in these relations, we find that for $0 \leq i, j \leq D$,
\begin{align}
	E_i^*AE_j^* = 0 \qquad \text{if} \quad | i - j | > 1, \label{q-T rel(1)}\\
	E_iA^*E_j = 0 \qquad \text{if} \quad | i - j | > 1. \label{q-T rel(2)}
\end{align}
Moreover, setting $j=i$ in \eqref{T rel(1)}, \eqref{T rel(2)}, we find that for $0 \leq h, i \leq D$,
\begin{align}
	E_i^*A_hE_i^* = 0 \quad & \text{if} \quad h > 2i, \label{T rel(1): h >2i}\\
	E_iA_h^*E_i = 0 \quad & \text{if} \quad h>2i. \label{T rel(2): h>2i}
\end{align}
We mention a result for later use.
\begin{lemma}\label{lem: E*AE*sym}
For $0 \leq i \leq D$, consider the subspaces $E_i^* M E_i^*$ and $E_i M^* E_i$.
Then the following {\rm(i)}, {\rm(ii)} hold.
\begin{itemize}
\setlength{\itemsep}{0pt}
	\item[\rm(i)] Every element of $E_i^*ME_i^*$ is symmetric.
	\item[\rm(ii)] Every element of $E_i M^* E_i$ is symmetric.
\end{itemize}
\end{lemma}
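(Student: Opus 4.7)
The plan is to reduce each claim to symmetry on a spanning set and then invoke the symmetry of the defining matrices. For part (i), I would recall from \eqref{all-one mat} that the distance matrices $\{A_h\}_{h=0}^D$ form an $\mathbb{F}$-basis of $M$ and each satisfies $A_h^\top = A_h$, and that each dual primitive idempotent is diagonal, so $E_i^{*\top}=E_i^*$ by \eqref{eq:dprim (1)}. Hence for $0 \le h \le D$,
$$
	(E_i^* A_h E_i^*)^\top = E_i^{*\top}\, A_h^\top\, E_i^{*\top} = E_i^* A_h E_i^*,
$$
so each $E_i^* A_h E_i^*$ is symmetric. Since an arbitrary element of $E_i^* M E_i^*$ is an $\mathbb{F}$-linear combination of the matrices $\{E_i^* A_h E_i^*\}_{h=0}^D$, and since the symmetric matrices form an $\mathbb{F}$-subspace of $\Mat_X(\mathbb{F})$, (i) follows.

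For part (ii) I would run the same argument with the dual data in place of the primal data. The relevant inputs are: the dual distance matrices $\{A_h^*\}_{h=0}^D$ form an $\mathbb{F}$-basis of $M^*$ and satisfy $A_h^{*\top}=A_h^*$ by \eqref{eq: dual mats}; and $E_i^\top = E_i$ by \eqref{Ei transpose}. The same three-line computation then shows that each $E_i A_h^* E_i$ is symmetric, and passing to $\mathbb{F}$-linear combinations gives that every element of $E_i M^* E_i$ is symmetric.

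I do not anticipate any obstacle: the lemma is essentially a bookkeeping statement that exploits the symmetry of the four families $\{A_h\}$, $\{A_h^*\}$, $\{E_i\}$, $\{E_i^*\}$, all recorded in Sections~\ref{sec:BMalg}--\ref{sec:Q-poly Property}. The only points worth flagging are that symmetry (not Hermitian symmetry) is what is claimed, so the statement is valid over both $\mathbb{F}=\mathbb{R}$ and $\mathbb{F}=\mathbb{C}$ without any change, and that symmetry is preserved by $\mathbb{F}$-linear combinations so it suffices to check it on a basis.
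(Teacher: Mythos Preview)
Your proof is correct and follows essentially the same approach as the paper: both arguments use that $\{A_h\}_{h=0}^D$ is a basis for $M$ consisting of symmetric matrices and that $E_i^*$ is symmetric, with the dual version for (ii). Your write-up simply spells out the transpose computation and the linearity step in more detail than the paper's one-line proof.
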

\begin{proof}
(i): Since $\{A_j\}_{j=0}^D$ forms a basis for $M$ and the matrices $E_i^*$, $\{A_j\}_{j=0}^D$ are symmetric. \\
(ii): Similar to (i).
\end{proof}

\section{Action of $A$, $A^*$ on an irreducible $T$-module}\label{sec:T-mod TDpairs}
Recall the $Q$-polynomial distance-regular graph $\Gamma$.
Fix a vertex $x \in X$ and write $T = T(x)$.
In this section, we discuss how the matrices $A$, $A^*$ act on an irreducible $T$-module.
Recall the eigenspaces $\{E_iV\}_{i=0}^D$ of $A$ and the eigenspaces $\{E^*_iV\}_{i=0}^D$ of $A^*$.
In \eqref{eq:A-action}, we mentioned the action of $A$ on $\{E^*_iV\}_{i=0}^D$. 
Using \eqref{q-T rel(2)}, the action of $A^*$ on $\{E_iV\}_{i=0}^D$ is given as follows:
\begin{equation}\label{eq:A*-action}
	A^*E_iV \subseteq E_{i-1}V + E_iV + E_{i+1}V \qquad (0 \leq i \leq D),
\end{equation}
where $E_{-1}=0$ and $E_{D+1}=0$.

\smallskip
Let $W$ denote an irreducible $T$-module. 
By \eqref{ods:EiV}, the subspace $W$ is the direct sum of the nonzero spaces among $\{E_iW\}_{i=0}^D$.
By the \emph{dual endpoint} of $W$, we mean $\min \{i \mid 0 \leq i \leq D, \; E_iW \neq 0 \}$.
By the \emph{dual diameter} of $W$, we mean $| \{i \mid 0 \leq i \leq D, \; E_iW \neq 0 \} | -  1$.
Let $s$ denote the dual endpoint of $W$ and let $\delta$ denote the dual diameter of $W$.
By \cite[Lemma~3.12]{Ter1992JAC}, we have $s+\delta \leq D$ and $E_iW \neq 0$ if and only if $s \leq i \leq s+\delta$ $(0 \leq i \leq D)$.
Moreover, the dual diameter of $W$ is equal to the diameter of $W$ \cite[Corollary~13.7]{Ter2024}.
By construction, we have
\begin{equation}\label{eq:A* act W}
	A^*E_iW \subseteq E_{i-1}W + E_iW + E_{i+1}W \qquad (s \leq i \leq s+d).
\end{equation}

\begin{lemma}[{\cite[Lemmas~3.9,~3.12]{Ter1992JAC}}]\label{A,A*;TDpair}
Let $W$ denote an irreducible $T$-module.
Then the pair $A$, $A^*$ acts on $W$ as a TD pair.
\end{lemma}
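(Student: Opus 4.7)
The plan is to verify, for the restrictions of $A$ and $A^*$ to $W$, the four defining conditions (i)--(iv) of a TD pair given in Definition~\ref{def1}. The key observation throughout is that, because $A$ generates $M$ and $A^*$ generates $M^*$, the primitive idempotents $E_i$ and $E_i^*$ all lie in $T$; hence each $E_i W$ and each $E_i^* W$ is contained in $W$, so $W$ is stable under these projections.

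First I would handle diagonalizability. The matrix $A$ is real symmetric, so it is diagonalizable on $V$ with eigenspace decomposition $V = \sum_{i=0}^D E_i V$; intersecting with $W$ and using $E_i W \subseteq W$ gives $W = \sum_{i=0}^D E_i W$, which shows $A|_W$ is diagonalizable with eigenspaces the nonzero $E_i W$. By the discussion preceding \eqref{eq:A* act W}, the nonzero $E_i W$ are precisely those with $s \le i \le s+d$, where $s$ is the dual endpoint and $d$ the diameter of $W$ (using $\delta = d$ from Corollary~13.7 of \cite{Ter2024} cited there). The same argument applied to $A^*$, together with the endpoint discussion in Section~\ref{sec:T-alg}, shows $A^*|_W$ is diagonalizable with eigenspaces exactly the nonzero $E_i^* W$ for $r \le i \le r+d$.

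Next I would exhibit the required standard orderings. Define $V_i = E_{s+i} W$ and $V_i^* = E_{r+i}^* W$ for $0 \le i \le d$. Then \eqref{eq:A* act W} gives $A^* V_i \subseteq V_{i-1} + V_i + V_{i+1}$ with $V_{-1} = V_{d+1} = 0$ (all terms outside the range $[s, s+d]$ vanish), verifying condition~(ii) of Definition~\ref{def1}; and \eqref{eq:A act W} yields $A V_i^* \subseteq V_{i-1}^* + V_i^* + V_{i+1}^*$, verifying~(iii). Finally, for the irreducibility condition~(iv), suppose $U \subseteq W$ satisfies $AU \subseteq U$ and $A^* U \subseteq U$. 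Since $A, A^*$ generate $T$, this forces $TU \subseteq U$, so $U$ is a $T$-submodule of the irreducible $T$-module $W$; hence $U = 0$ or $U = W$.

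There is no real obstacle here: the lemma is essentially a direct translation between the language of irreducible $T$-modules for a $Q$-polynomial distance-regular graph and the axioms of a TD pair, and every ingredient has already been assembled in Sections~\ref{sec:T-alg}--\ref{sec:T-mod TDpairs}. The only mildly delicate point is the bookkeeping that identifies the eigenspaces of $A|_W$ (resp.\ $A^*|_W$) with the nonzero $E_i W$ (resp.\ $E_i^* W$) in a consecutive range of indices, so that the three-term containments from \eqref{eq:A act W} and \eqref{eq:A* act W} indeed produce standard orderings in the sense of Definition~\ref{def1}.
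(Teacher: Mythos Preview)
Your proposal is correct and is exactly the standard verification. The paper itself gives no proof of this lemma; it simply cites \cite[Lemmas~3.9,~3.12]{Ter1992JAC}, where the ingredients you invoke (the consecutive support of the $E_iW$ and $E_i^*W$, the tridiagonal actions \eqref{eq:A act W} and \eqref{eq:A* act W}, and the fact that $A,A^*$ generate $T$) are established, so your argument is precisely the intended one spelled out in full.
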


\begin{lemma}[{\cite[Lemmas~3.9,~3.12]{Ter1992JAC}}]\label{A,A*;TDsystem}
Let $W$ denote an irreducible $T$-module with endpoint $r$, dual endpoint $s$, and diameter $d$.
Then the sequence
\begin{equation}\label{eq: seq TDsys}
	\Phi=(A; \{E_{s+i}\}_{i=0}^{d}; A^*; \{E_{r+i}^*\}_{i=0}^{d})
\end{equation}
acts on $W$ as a TD system.
\end{lemma}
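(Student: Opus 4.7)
The plan is to verify, one at a time, the three clauses of Definition~\ref{def:TDsystem} for the sequence $\Phi$ acting on $W$. The whole argument will amount to matching definitions against facts that have already been established in Sections~\ref{sec:T-alg}--\ref{sec:T-mod TDpairs}. Clause (i), that $A, A^*$ is a TD pair on $W$, is exactly the statement of Lemma~\ref{A,A*;TDpair}, so nothing needs to be done there.

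For clause (ii), I would first observe that $E_{s+i}$ is a polynomial in $A$, by the explicit formula in~\eqref{eq(2):I,E_i}; since $W$ is $T$-invariant and in particular $A$-invariant, each $E_{s+i}$ stabilizes $W$, and its restriction $E_{s+i}|_W$ is the projection of $W$ onto $E_{s+i}W$. The space $E_{s+i}W$ is the $\theta_{s+i}$-eigenspace of $A|_W$. By the definitions of the dual endpoint $s$ and diameter $d$ recalled in Section~\ref{sec:T-mod TDpairs}, the nonzero spaces among $\{E_jW\}_{j=0}^D$ are precisely $\{E_{s+i}W\}_{i=0}^d$, so the restrictions $\{E_{s+i}|_W\}_{i=0}^d$ exhaust the primitive idempotents of $A|_W$; since the eigenvalues $\{\theta_{s+i}\}_{i=0}^d$ are mutually distinct, this is a genuine ordering. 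That the ordering is standard is the content of~\eqref{eq:A* act W}.

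For clause (iii), the argument is completely symmetric: $E_{r+i}^*$ is a polynomial in $A^*$ and hence preserves $W$, the restrictions $\{E_{r+i}^*|_W\}_{i=0}^d$ give the primitive idempotents of $A^*|_W$ in the order indexed by the endpoint $r$ and diameter $d$, and~\eqref{eq:A act W} provides the standardness condition. Assembling the three clauses finishes the proof. I do not anticipate any genuine obstacle; the only step that deserves momentary care is the observation that the restriction of a primitive idempotent of $A$ (resp.\ $A^*$) on $V$ to a $T$-invariant subspace really is a primitive idempotent of the restricted operator, and that is an immediate consequence of the polynomial expressions in~\eqref{eq(2):I,E_i} applied to $A$ and $A^*$.
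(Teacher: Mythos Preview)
Your proposal is correct and is essentially the natural unpacking of the citation the paper gives in lieu of a proof: the paper simply attributes the lemma to \cite[Lemmas~3.9,~3.12]{Ter1992JAC}, and what you have written is precisely the verification of Definition~\ref{def:TDsystem} from the facts recorded in Sections~\ref{sec:T-alg}--\ref{sec:T-mod TDpairs}.
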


Let $W$ denote an irreducible $T$-module with endpoint $r$, dual endpoint $s$, and diameter $d$.
Define a sequence $\{\rho_i\}_{i=0}^d$ by
\begin{equation}\label{def:shape}
	\rho_i = \dim(E_{r+i}^*W) \qquad (0 \leq i \leq d).
\end{equation}
Note that $\rho_i \ne 0$ for $0 \leq i \leq d$.
The sequence $\{\rho_i\}_{i=0}^d$ is called the \emph{shape} of $W$.
The $T$-module $W$ is called \emph{sharp} whenever $\rho_0=1$.
In the next result, we emphasize a point for later use.

\begin{lemma}[{\cite[Corollary~5.7]{ITT}}]\label{lem: dim sym}
Let $W$ denote an irreducible $T$-module with endpoint $r$, dual endpoint $s$, and diameter $d$.
For $0 \leq i \leq d$, the following subspaces have the same dimension:
\begin{equation}\label{eq: 4 subspaces}
	E_{r+i}^*W, \qquad E_{r+d-i}^*W, \qquad E_{s+i}W, \qquad E_{s+d-i}W.
\end{equation}
\end{lemma}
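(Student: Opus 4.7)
The plan is to reduce the claim to the TD-system shape identities already recorded as \eqref{eq(1): shape duality} and \eqref{eq(2): shape sym}. By Lemma~\ref{A,A*;TDsystem}, the sequence
$$\Phi=(A;\{E_{s+i}\}_{i=0}^d; A^*; \{E_{r+i}^*\}_{i=0}^d)$$
acts on $W$ as a TD system of diameter $d$, and its shape sequence is $\rho_i = \dim(E_{r+i}^*W)$ for $0 \le i \le d$. Identity \eqref{eq(1): shape duality} applied to $\Phi$ on $W$ yields $\dim(E_{r+i}^*W) = \dim(E_{s+i}W)$, which equates the first and third subspaces of \eqref{eq: 4 subspaces}; the same identity applied at index $d-i$ equates the second and fourth. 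The palindromic identity \eqref{eq(2): shape sym} then gives $\dim(E_{r+i}^*W) = \dim(E_{r+d-i}^*W)$, linking the first and second subspaces. Chaining these three equalities proves the lemma.

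The substantive content thus lies in \eqref{eq(1): shape duality} and \eqref{eq(2): shape sym} themselves, and the only thing that needs verification is that the TD-system conclusions apply verbatim to $\Phi$ acting on the $T$-module $W$. This is immediate from Lemma~\ref{A,A*;TDsystem}: the pair $(A,A^*)$ acts as a TD pair on $W$, the orderings $\{E_{s+i}\}_{i=0}^d$ and $\{E_{r+i}^*\}_{i=0}^d$ are standard, and the relevant eigenspaces of $A$, $A^*$ on $W$ are precisely $E_{s+i}W$ and $E_{r+i}^*W$.

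If a proof internal to the excerpt were desired, the standard route would introduce the split decomposition $U_i = \bigl(\sum_{j \le i}E_{r+j}^*W\bigr) \cap \bigl(\sum_{j \ge i}E_{s+j}W\bigr)$ and show by induction on $i$, using the tridiagonal actions \eqref{eq:A act W} and \eqref{eq:A* act W}, that $\sum_{j=0}^i U_j = \sum_{j=0}^i E_{r+j}^*W$ and $\sum_{j=i}^d U_j = \sum_{j=i}^d E_{s+j}W$. Telescoping the partial sums then yields $\dim U_i = \dim(E_{r+i}^*W) = \dim(E_{s+i}W)$. For the palindromic symmetry, apply the same construction to the reversed TD system obtained from $\Phi$ by inverting $\{E_{s+i}\}_{i=0}^d$, which is standard by the comment after Definition~\ref{def:TDsystem}.

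The main obstacle, were one to carry this out from scratch, would be establishing the two partial-sum identities for $U_i$. They require a careful inductive use of the tridiagonal relations and form the technical heart of \cite[Sections~5--6]{ITT}; once they are in hand, the lemma is essentially a notational restatement of the TD-system shape properties.
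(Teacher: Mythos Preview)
Your proposal is correct and matches the paper's approach: the paper simply cites \cite[Corollary~5.7]{ITT} for this lemma, and your reduction via Lemma~\ref{A,A*;TDsystem} to the shape identities \eqref{eq(1): shape duality} and \eqref{eq(2): shape sym} is exactly how that citation cashes out. Your additional sketch of the split-decomposition argument is accurate but goes beyond what the paper records.
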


We finish this section with some comments on the primary module.

\begin{lemma}[{cf.~\cite[Lemma~3.6]{Ter1992JAC},~\cite[Proposition~8.3]{Egge2000}}]\label{lem:equi pri Tmod}
Let $W$ denote an irreducible $T$-module with endpoint $r$, dual endpoint $s$, and diameter $d$.
Then the following {\rm(i)}--{\rm(iv)} are equivalent.
\begin{itemize}
\setlength{\itemsep}{0pt}
	\item[\rm(i)] $W$ is primary.
	\item[\rm(ii)] $r=0$.
	\item[\rm(iii)] $s=0$.
	\item[\rm(iv)] $d=D$.
\end{itemize}
Suppose that $W$ is primary. Then
\begin{equation}\label{eq:Ei*W=1}
	\rho_i = 1 \qquad  (0 \leq i \leq d), 
\end{equation}	
where $\{ \rho_i \}_{i=0}^d$ is the shape of $W$.
\end{lemma}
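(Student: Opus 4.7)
}
The plan is to exploit the concrete description of the primary $T$-module as $M\hat{x}=M^*\mathds{1}$, together with the facts that $E_0^*V=\mathbb{F}\hat{x}$ and $E_0V=\operatorname{Span}\{\mathds{1}\}$ are both one-dimensional. The equivalences will be established through a short cycle of implications, and the sharpness of the shape at the end will follow from a dimension count.

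First I would handle (i)$\Leftrightarrow$(ii). If $W$ is primary, then $W=M\hat{x}$ contains $\hat{x}=E_0^*\mathds{1}\in E_0^*V$, so $E_0^*W\ne 0$ and hence $r=0$. Conversely, suppose $r=0$. Then $E_0^*W\ne 0$, but $E_0^*W\subseteq E_0^*V=\mathbb{F}\hat{x}$, which forces $\hat{x}\in W$. Since $M\subseteq T$, we obtain $M\hat{x}\subseteq W$; as $M\hat{x}$ is a nonzero $T$-submodule of the irreducible module $W$, we conclude $W=M\hat{x}$ and $W$ is primary. The equivalence (i)$\Leftrightarrow$(iii) is proved analogously, using the characterization $W=M^*\mathds{1}$ of the primary module, the fact that $E_0V=\operatorname{Span}\{\mathds{1}\}$ is one-dimensional so that $s=0$ forces $\mathds{1}\in W$, and then $M^*\mathds{1}\subseteq W$ together with irreducibility.

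Next I would treat (i)$\Leftrightarrow$(iv). If $W$ is primary, then for each $0\le i\le D$ the vector $E_i^*\mathds{1}$ lies in $W\cap E_i^*V$ and is nonzero because $E_i^*V$ contains the characteristic vector of any vertex at distance $i$ from $x$ (such a vertex exists since $\Gamma$ has diameter $D$); hence every $E_i^*W\ne 0$, giving $d=D$. Conversely, if $d=D$, then from $r+d\le D$ we immediately get $r=0$, so $W$ is primary by the equivalence with (ii).

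Finally, assuming $W$ is primary, I would establish \eqref{eq:Ei*W=1} by a dimension count. On the one hand, $\dim W=\dim M\hat{x}=D+1$, since the vectors $\{E_i^*\mathds{1}\}_{i=0}^D$ lie in the pairwise orthogonal subspaces $\{E_i^*V\}_{i=0}^D$ and are all nonzero. On the other hand, by the orthogonal decomposition $W=\sum_{i=0}^{d} E_i^*W$ and the observation $E_i^*\mathds{1}\in E_i^*W$, each summand satisfies $\rho_i=\dim E_i^*W\ge 1$ for $0\le i\le d=D$. Combining $\sum_{i=0}^{D}\rho_i=D+1$ with $\rho_i\ge 1$ forces $\rho_i=1$ for all $i$. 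There is no serious obstacle here; the only point that requires a small argument is the existence, for each $0\le i\le D$, of a vertex at distance $i$ from $x$ (used to see $E_i^*\mathds{1}\ne 0$), which is immediate from the fact that $\Gamma$ is connected of diameter $D$.
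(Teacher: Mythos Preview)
Your proof is correct and supplies a clean, self-contained argument. The paper itself does not give a proof here: it simply cites \cite[Lemma~3.6]{Ter1992JAC}, so your write-up is in fact more detailed than what appears in the paper.

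One small remark: the existence, for each $0\le i\le D$, of a vertex at distance $i$ from the base vertex $x$ is not a consequence of connectedness and diameter $D$ alone (in a general connected graph the eccentricity of $x$ may be strictly less than $D$); it follows here because $\Gamma$ is distance-regular, so $k_i=|\Gamma_i(x)|>0$ for all $0\le i\le D$, as the paper records after \eqref{def:ith_valency}. With that adjustment your argument is complete.
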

\begin{proof}
By \cite[Lemma~3.6]{Ter1992JAC}.
\end{proof}

\section{Proof of Theorem \ref{mainthm}}\label{sec:pf thm}

In this section, we prove Theorem \ref{mainthm}. 
Recall the $Q$-polynomial distance-regular graph $\Gamma$.
Fix $x \in X$ and write $T=T(x)$.
We have preliminary comments.
For $0 \leq i \leq D$, consider the subspace $E_i^*TE_i^*$. 
Note that $E_i^*TE_i^*$ is an algebra with multiplicative identity $E_i^*$.
By construction, $E_i^*TE_i^*$ is finite-dimensional and closed under the conjugate transpose map, hence it is semisimple \cite{CR1962}. 
In general, this algebra is not commutative \cite[Theorem 5.1]{Ter1993JAC-2}. 
Note that the $i$-th subconstituent $E_i^*V$ is invariant under the action of $E_i^*TE_i^*$.

\begin{lemma}[{\cite[Theorem~2.6]{NT2008LAA}}]\label{NT:Thm2.6}
Let $W$ denote an irreducible $T$-module with endpoint $r$, dual endpoint $s$, and diameter $d$.
Then the following {\rm(i)}--{\rm(iii)} hold on $W$.
\begin{itemize}
\setlength{\itemsep}{0pt}
	\item[\rm(i)] The algebra $E_r^* T E_r^*$ is generated by $E_r^* M E_r^*$.
	\item[\rm(ii)] The elements of $E_r^* M E_r^*$ mutually commute.
	\item[\rm(iii)] The algebra $E_r^* T E_r^*$ is commutative.
\end{itemize}
\end{lemma}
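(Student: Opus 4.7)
The plan is to deduce (iii) as a formal consequence of (i) and (ii); the real content lies in establishing (i) and (ii). Throughout, all identities are to be interpreted as identities of operators on $W$.

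For (i), I would use that $T$ is generated by $A$ and $A^*$, so every element of $E_r^* T E_r^*$ is a sum of monomials of the form
\[
E_r^* A^{n_0} A^* A^{n_1} A^* \cdots A^* A^{n_k} E_r^*.
\]
The outermost $A^*$ factors adjacent to the projectors $E_r^*$ collapse via $A^* E_r^* = E_r^* A^* = \theta_r^* E_r^*$, and each interior $A^*$ is expanded using $A^* = \sum_j \theta_j^* E_j^*$. This yields a sum of monomials with interspersed projectors $E_j^*$, only those with $r \le j \le r+d$ contributing on $W$. The goal is then to show that each such walk-type monomial lies in the algebra generated by $\{E_r^* A^n E_r^* : n \ge 0\} = E_r^* M E_r^*$. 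I would accomplish this by induction on the total degree, exploiting the tridiagonal relation $E_i^* A E_j^* = 0$ for $|i-j|>1$ (see \eqref{q-T rel(1)}) together with the Askey--Wilson-type relations satisfied by the TD pair $A, A^*$ on $W$ (Lemma~\ref{A,A*;TDpair}); these allow interior projectors to be telescoped against the outer $E_r^*$'s modulo lower-degree terms.

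For (ii), I would start from the commutativity of $M$, which gives $A_h A_{h'} = A_{h'} A_h$. Inserting $I = \sum_k E_k^*$ produces
\[
\sum_{k=0}^D E_r^* A_h E_k^* A_{h'} E_r^* = \sum_{k=0}^D E_r^* A_{h'} E_k^* A_h E_r^*.
\]
The $k = r$ summands are exactly the products $E_r^* A_h E_r^* \cdot E_r^* A_{h'} E_r^*$ and $E_r^* A_{h'} E_r^* \cdot E_r^* A_h E_r^*$ respectively, so it suffices to show the remaining cross-terms $\sum_{k \ne r}$ on the two sides agree on $W$ after swapping $h \leftrightarrow h'$. This is where I expect the main obstacle: one must pair up each off-level contribution with a matching contribution on the opposite side. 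My plan is to invoke the TD-pair structure on $W$ (Lemma~\ref{A,A*;TDpair}) together with the shape symmetry $\rho_i = \rho_{d-i}$ and the dimension identifications of Lemma~\ref{lem: dim sym} to construct an explicit bijection between cross-terms at level $k$ and cross-terms at the ``dual'' level, rendering the sums manifestly symmetric in $h$ and $h'$.

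Finally, (iii) is immediate: by (i) the algebra $E_r^* T E_r^*$ on $W$ is generated by $E_r^* M E_r^*$, and by (ii) these generators mutually commute, so $E_r^* T E_r^*$ acts commutatively on $W$. The principal difficulty is the cross-term balancing in (ii); the reductions in (i) are essentially formal applications of the $Q$-polynomial tridiagonal relations, whereas (ii) genuinely requires the rigid structure of an irreducible $T$-module with endpoint $r$ together with the duality between the $A$- and $A^*$-decompositions of $W$.
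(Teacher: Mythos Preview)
The paper does not give a proof here at all; it simply cites \cite[Theorem~2.6]{NT2008LAA} and moves on. So the relevant comparison is between your sketch and the argument in that reference.

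Your plan for (ii) has a genuine gap. After inserting $I=\sum_k E_k^*$ into $E_r^* A_h A_{h'} E_r^* = E_r^* A_{h'} A_h E_r^*$, you need the off-level sums $\sum_{k\ne r} E_r^* A_h E_k^* A_{h'} E_r^*$ and $\sum_{k\ne r} E_r^* A_{h'} E_k^* A_h E_r^*$ to agree on $W$. You propose to achieve this via the shape symmetry $\rho_i=\rho_{d-i}$ and the dimension identifications of Lemma~\ref{lem: dim sym}, but those results only equate \emph{dimensions} of eigenspaces; they say nothing about the operators $E_r^* A_h E_k^* A_{h'} E_r^*$ themselves, and there is no reason a ``bijection between cross-terms at level $k$ and the dual level'' should render the sums symmetric in $h,h'$. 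This step cannot be completed along the lines you describe. A secondary issue: in (i) you invoke ``Askey--Wilson-type relations'' for the TD pair, but general TD pairs need not satisfy such relations, so this is not available as a tool here.

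The argument in \cite{NT2008LAA} proceeds quite differently. It uses the split decomposition $\{U_i\}_{i=0}^d$ of the TD system on $W$, with $U_0=E_r^*W$, together with the polynomials $\tau_i(\lambda)=\prod_{j=0}^{i-1}(\lambda-\theta_{s+j})$. One shows that $\tau_i(A)$ carries $U_0$ into $U_i$ and that the compression $E_r^*\tau_i(A)E_r^*$ has a tightly controlled action determined by the split sequence; from this one reads off both that $\{E_r^*\tau_i(A)E_r^*\}_{i=0}^d$ span $E_r^*TE_r^*$ on $W$ (giving (i)) and that they mutually commute (giving (ii)). The commutativity is thus a structural consequence of the split decomposition, not a cross-term cancellation.
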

\begin{proof} 
By \cite[Theorem 2.6]{NT2008LAA}.
\end{proof}

\noindent
For $0 \leq i \leq D$, consider the subspace $E_i T E_i$. 
Note that $E_i T E_i$ is an algebra with the multiplicative identity $E_i$.
By construction, $E_i T E_i$ is finite-dimensional and closed under the conjugate transpose map, hence it is semisimple \cite{CR1962}.
In general, this algebra is not commutative \cite[Theorem 5.1]{Ter1993JAC-2}. 
Note that the subspace $E_iV$ is invariant under the action of $E_i T E_i$.

\begin{lemma}[{\cite[Theorem~2.6]{NT2008LAA}}]\label{NT:Thm2.6 dual}
Let $W$ denote an irreducible $T$-module with endpoint $r$, dual endpoint $s$, and diameter $d$.
Then the following {\rm(i)}--{\rm(iii)} hold on $W$.
\begin{itemize}
\setlength{\itemsep}{0pt}
	\item[\rm(i)] The algebra $E_s T E_s$ is generated by $E_s M^* E_s$.
	\item[\rm(ii)] The elements of $E_s M^* E_s$ mutually commute.
	\item[\rm(iii)] The algebra $E_s T E_s$ is commutative.
\end{itemize}
\end{lemma}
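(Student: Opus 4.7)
The plan is to derive this lemma from Lemma \ref{NT:Thm2.6} by appealing to the manifest symmetry in Definition \ref{def1}: the TD pair axioms treat $A$ and $A^*$ on equal footing, since (i) and (iv) are already symmetric in the two operators while (ii) and (iii) merely swap roles. Consequently, interchanging $A$ and $A^*$ in any TD pair produces another TD pair, and similarly for TD systems.

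Concretely, by Lemma \ref{A,A*;TDpair} the pair $A, A^*$ acts on $W$ as a TD pair, and by Lemma \ref{A,A*;TDsystem} the associated TD system is
\[
\Phi = (A;\,\{E_{s+i}\}_{i=0}^{d};\,A^*;\,\{E_{r+i}^*\}_{i=0}^{d}).
\]
Interchanging the roles of $A$ and $A^*$ yields another TD system acting on $W$, namely
\[
\Phi^{\vee} = (A^*;\,\{E_{r+i}^*\}_{i=0}^{d};\,A;\,\{E_{s+i}\}_{i=0}^{d}).
\]
Lemma \ref{NT:Thm2.6}, which is \cite[Theorem~2.6]{NT2008LAA} applied to $\Phi$, concerns the primitive idempotent $E_r^*$ of the second operator attached to the smallest index of the standard ordering, and the algebra $M$ generated by the first operator. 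Its conclusion is that $E_r^* T E_r^*$ is generated by $E_r^* M E_r^*$, that elements of $E_r^* M E_r^*$ commute, and that $E_r^* T E_r^*$ is commutative.

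Now apply the same theorem to $\Phi^{\vee}$. In this dual system the second operator is $A$ and the smallest-index primitive idempotent is $E_s$, while the first operator is $A^*$, which generates $M^*$ (by the $Q$-polynomial property, see the comment after \eqref{eq:dual int num abc}). Since the Terwilliger algebra $T$ is generated by $A$ and $A^*$ (equivalently by $M$ and $M^*$), the algebra $T$ is unchanged by the swap. Hence \cite[Theorem~2.6]{NT2008LAA} applied to $\Phi^{\vee}$ gives exactly the three assertions: on $W$ the algebra $E_s T E_s$ is generated by $E_s M^* E_s$, the elements of $E_s M^* E_s$ mutually commute, and $E_s T E_s$ is commutative.

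The only point to take care of is confirming that \cite[Theorem~2.6]{NT2008LAA} is genuinely intrinsic to the TD system and not attached to a particular labeling of the two operators. This is the main obstacle, but because the theorem's hypotheses and conclusions are phrased in terms of a generic TD system and involve no external structure beyond $A$, $A^*$, and their standard eigenspace orderings, no additional verification beyond noting the symmetry of Definition \ref{def1} is needed. Thus (i)--(iii) follow as immediate translations of Lemma \ref{NT:Thm2.6} under the exchange $A \leftrightarrow A^*$, $M \leftrightarrow M^*$, $r \leftrightarrow s$, $E_r^* \leftrightarrow E_s$.
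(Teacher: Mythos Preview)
Your proposal is correct and takes essentially the same approach as the paper: both invoke \cite[Theorem~2.6]{NT2008LAA}, with your version simply making explicit the duality argument (swap $A \leftrightarrow A^*$, $M \leftrightarrow M^*$, $E_r^* \leftrightarrow E_s$) that justifies applying the same cited theorem to the dual TD system $\Phi^{\vee}$. The paper's proof is the one-line citation, and your expansion of it is accurate.
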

\begin{proof} 
By \cite[Theorem 2.6]{NT2008LAA}.
\end{proof}

We are now ready to prove Theorem \ref{mainthm}.

\begin{proof}[Proof of Theorem \ref{mainthm}]
Assume $\mathbb{F}=\mathbb{R}$.
Let $W$ denote an irreducible $T$-module with endpoint $r$, dual endpoint $s$ and diameter $d$.
Let $\{\rho_i\}_{i=0}^d$ denote the shape of $W$.
We will show that $\rho_0=1$; that is, $\dim(E_r^*W)=1$.
Observe that $E_r^*W$ is non-zero, finite-dimensional, and invariant under $E_r^* T E_r^*$.
Consider the restriction of $E_r^* T E_r^*$ to $W$.
By Lemmas \ref{lem: E*AE*sym}(i) and \ref{NT:Thm2.6}, all elements of $E_r^*TE_r^*$ are simultaneously diagonalizable on $W$.
Therefore, there exists a nonzero vector $w \in E_r^*W$ that is a common eigenvector for $E_r^* T E_r^*$.

Now, consider the subspace $Tw$ of $V$.
Observe that $Tw$ is a nonzero $T$-submodule of $W$.
By the irreducibility of $W$, we have $Tw=W$.
Using this and the fact that $w \in E_r^*W$, applying $E_r^* T E_r^*$ to $w$ gives
$$
	E_r^* T E_r^* w = E_r^* T w=E_r^* W.
$$
Since $w$ is a common eigenvector for $E_r^* T E_r^*$, we have
$
	E_r^* T E_r^* w = \operatorname{Span}\{ w\},
$ 
which is equal to $E_r^* W$.
Therefore, $\dim(E_r^*W) = 1$, and hence $\rho_0 = 1$.
The proof is complete.
\end{proof}

Next, we prove Corollary \ref{main cor}.
\begin{proof}[Proof of Corollary \ref{main cor}]
Consider a $Q$-polynomial distance-regular graph with diameter $D$.
If $D=1$, then the graph is complete, which is trivially sharp.
If $D=2$, then the graph is strongly regular and sharp by the result of \cite{TY1994}.
If $D \geq 3$, then the graph is sharp by Theorem~\ref{mainthm}.
The result follows.
\end{proof}

\section{The complexification of a $T$-module}\label{sec:pf thm2}
In this section, we prove Theorems~\ref{thm:complexification},~\ref{thm:WC iso}, and~\ref{thm:Wedderburn}.
Recall the $Q$-polynomial distance-regular graph $\Gamma$.  
Fix $x \in X$ and write $T = T(x)$.
Recall the ground field $\mathbb{F}=\mathbb{R}$ or $\mathbb{F}=\mathbb{C}$.
When it is important to emphasize the ground field, we will write $T=T^\mathbb{F}$.
By construction, $T^\mathbb{F}$ is the subalgebra of $\Mat_X(\mathbb{F})$ generated by $A$ and $A^*$.
We now compare $T^\mathbb{R}$ and $T^\mathbb{C}$.
Recall that $A, A^* \in \Mat_X(\mathbb{R})$.
By construction,
\begin{equation}\label{eq:TC=TR+iTR}
	T^\mathbb{C} = T^\mathbb{R} + \boldsymbol{i}  T^\mathbb{R},
\end{equation}
where $\boldsymbol{i} = \sqrt{-1}$ and the sum in \eqref{eq:TC=TR+iTR} is direct over $\mathbb{R}$.
Note that $\dim_{\mathbb{R}}T^\mathbb{R} = \dim_{\mathbb{C}} T^\mathbb{C}$.
By \eqref{eq:TC=TR+iTR}, we have
\begin{equation}\label{eq:T^C}
	T^\mathbb{C} = T^\mathbb{R} \otimes \mathbb{C},
\end{equation}
where we understand $\otimes=\otimes_{\mathbb{R}}$.
Motivated by \eqref{eq:T^C}, we call $T^\mathbb{C}$ the \emph{complexification of $T^\mathbb{R}$}.

\smallskip
Next, let $W$ denote a $T^\mathbb{R}$-module.  
The \emph{complexification of $W$} is the $\mathbb{C}$-vector space
\begin{equation}\label{def:complexification}
	W^\mathbb{C} = W \otimes \mathbb{C},
\end{equation}
see \cite[p.~379]{Roman2010}.
Then $W^\mathbb{C}$ is a $T^\mathbb{C}$-module.
By construction,
\begin{equation}\label{W_C:C-v.s}
    W^\mathbb{C} = W + \boldsymbol{i}W,
\end{equation}
where the sum in \eqref{W_C:C-v.s} is direct over $\mathbb{R}$.
Note that $\dim_{\mathbb{R}}W = \dim_\mathbb{C} W^\mathbb{C}$.

\smallskip
We are now ready to prove Theorem \ref{thm:complexification}.

\begin{proof}[Proof of Theorem \ref{thm:complexification}]
Recall that $W$ is an irreducible $T^\mathbb{R}$-module.
Let $r$ denote the endpoint of $W$.
By Theorem~\ref{mainthm}, $W$ is sharp, so  $\dim_{\mathbb{R}}(E_r^*W)=1$.
Consider the complexification $W^\mathbb{C}$ of $W$.
By construction, $\dim_{\mathbb{C}}(E_r^*W^\mathbb{C})=1$.
Decompose $W^\mathbb{C}$ as a direct sum of irreducible $T^\mathbb{C}$-modules to get
\begin{equation}\label{eq:dsWC}
	W^\mathbb{C} = \sum_j W_j.
\end{equation}
Applying $E_r^*$ to \eqref{eq:dsWC}, we obtain a direct sum of $\mathbb{C}$-vector spaces
\begin{equation}\label{eq:dsEr*WC}
	E_r^*W^\mathbb{C} = \sum_j E_r^*W_j.
\end{equation}
Since the left-hand side of \eqref{eq:dsEr*WC} has dimension $1$, there is a unique nonzero summand on the right-hand side of \eqref{eq:dsEr*WC}.
We denote this summand by $E_r^*W_{\xi}$.
It follows that $E_r^*W^\mathbb{C} = E_r^*W_{\xi} \subseteq W_{\xi}$.
Moreover, $E_r^*W^\mathbb{C}$ is the $\mathbb{C}$-span of $E_r^*W$, so 
$$
	E_r^*W \subseteq E_r^*W^\mathbb{C} \subseteq W_{\xi}.
$$
Pick $0 \ne z \in E_r^*W \subseteq W_{\xi}$.
Since $W_{\xi}$ is an irreducible $T^\mathbb{C}$-module, we have $W_{\xi} = T^\mathbb{C} z$.
Also, by the irreducibility of the $T^\mathbb{R}$-module $W$, we have $W = T^\mathbb{R} z$.
Since $T^\mathbb{R} z \subseteq T^\mathbb{C} z$, it follows that $W  \subseteq  W_\xi$.
Furthermore, $\boldsymbol{i} W \subseteq W_{\xi}$ since $W_{\xi}$ is closed under multiplication by complex scalars.
Therefore,
\begin{equation}\label{eq:W+iW}
	W + \boldsymbol{i} W  \subseteq W_{\xi}.
\end{equation}
By \eqref{W_C:C-v.s} and \eqref{eq:W+iW}, we obtain $W^\mathbb{C} \subseteq W_{\xi}$.
Since $W_{\xi}$ is an irreducible $T^\mathbb{C}$-module, it follows that $W_{\xi} = W^\mathbb{C}$.
Hence, the $T^\mathbb{C}$-module $W^\mathbb{C}$ is irreducible. 
\end{proof}

We now prove Theorem \ref{thm:WC iso}.

\begin{proof}[Proof of Theorem \ref{thm:WC iso}]
Recall that $W_1$ and $W_2$ are irreducible $T^\mathbb{R}$-modules.  
Suppose that $W_1$ and $W_2$ are isomorphic as $T^\mathbb{R}$-modules.  
By \eqref{def:complexification}, we have an isomorphism of $T^\mathbb{C}$-modules
$$
	W_1^\mathbb{C} 
	= W_1 \otimes \mathbb{C}
	\;\cong\;  W_2 \otimes \mathbb{C}
	= W_2^\mathbb{C}.
$$

\smallskip
Conversely, suppose $W_1^\mathbb{C}$ and $W_2^\mathbb{C}$ are isomorphic as $T^\mathbb{C}$-modules.  
Let $\eta: W_1^\mathbb{C} \to W_2^\mathbb{C}$ be a $T^\mathbb{C}$-module isomorphism.
For $j \in \{1,2\}$ view $W_j^\mathbb{C} = W_j + \boldsymbol{i} W_j$ as in \eqref{W_C:C-v.s}.
For $w \in W_1 \subseteq W_1^\mathbb{C}$, write
\begin{equation}
	\eta (w) = \sigma_1(w) + \boldsymbol{i} \sigma_2(w),
\end{equation}
where $\sigma_1$, $\sigma_2: W_1 \to W_2$ are $\mathbb{R}$-linear maps. 

\smallskip
We first show that both $\sigma_1$ and $\sigma_2$ are $T^\mathbb{R}$-linear.
Pick $B \in T^\mathbb{R}$ and $w \in W_1$. 
Since $B \in T^\mathbb{C}$ and $\eta$ is $T^{\mathbb{C}}$-linear, we have $\eta(Bw)=B\eta(w)$.
Thus,
$$
	\sigma_1(Bw) + \boldsymbol{i} \sigma_2(Bw) = B \sigma_1(w) + \boldsymbol{i} B \sigma_2(w).
$$
Comparing real and imaginary parts, we obtain $\sigma_1(Bw) = B \sigma_1(w)$ and $\sigma_2(Bw) = B \sigma_2(w)$.
Therefore, both $\sigma_1$ and $\sigma_2$ are $T^\mathbb{R}$-linear.

\smallskip
Next, we show that at least one of $\sigma_1, \sigma_2$ is nonzero.
If $\sigma_1 = \sigma_2 = 0$, then $\eta$ vanishes on $W_1$.
By $\mathbb{C}$-linearity, $\eta(\boldsymbol{i} w) = \boldsymbol{i} \eta(w) = 0$ for all $w \in W_1$, so $\eta$ also vanishes on $\boldsymbol{i} W_1$.
Hence, $\eta = 0$ on $W_1^\mathbb{C} = W_1 + \boldsymbol{i} W_1$.
This contradicts that $\eta$ is an isomorphism. 
Therefore at least one of $\sigma_1,\sigma_2$ is nonzero.
Suppose that $\sigma_1 \neq 0$ (the case $\sigma_2 \ne 0$ is similar).
Then $\sigma_1(W_1)$ is a nonzero submodule of the $T^\mathbb{R}$-module $W_2$, and hence $\sigma_1(W_1) = W_2$ by the irreducibility of the $T^\mathbb{R}$-module $W_2$.
Moreover, $\ker\sigma_1$ is a submodule of the $T^\mathbb{R}$-module $W_1$.  
Since the $T^\mathbb{R}$-module $W_1$ is irreducible and $\sigma_1 \neq 0$, we have $\ker\sigma_1=0$.  
Therefore $\sigma_1: W_1 \to W_2$ is bijective, and hence a $T^\mathbb{R}$-module isomorphism.  
The result follows.
\end{proof}

Next, we prove Theorem \ref{thm:Wedderburn}.
We have some preliminary comments.
For an irreducible $T^{\mathbb{R}}$-module $W$, let $\End_{T^{\mathbb{R}}}(W)$ denote the $\mathbb{R}$-algebra consisting of the $T^{\mathbb{R}}$-linear maps from $W$ to $W$.
Then $\End_{T^{\mathbb{R}}}(W)$ is a finite-dimensional division algebra over $\mathbb{R}$ (cf. \cite[\S1.2]{FH1991}).
Consequently, $\End_{T^{\mathbb{R}}}(W)$ is isomorphic to $\mathbb{R}$, $\mathbb{C}$, or $\mathbb{H}$ as an $\mathbb{R}$-algebra (cf. \cite[Theorem~13.12]{Lam2001}).

\begin{lemma}\label{lem:End_T(W)=R}
Assume that $\mathbb{F}=\mathbb{R}$.
Let $W$ denote an irreducible $T$-module.
Then $\End_T(W)$ is isomorphic to $\mathbb{R}$ as an $\mathbb{R}$-algebra.
\end{lemma}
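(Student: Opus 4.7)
The plan is to leverage Theorem~\ref{thm:complexification} together with Schur's lemma over the algebraically closed field $\mathbb{C}$. By the comment preceding the lemma, $\End_{T^\mathbb{R}}(W)$ is a finite-dimensional division $\mathbb{R}$-algebra, hence isomorphic as an $\mathbb{R}$-algebra to one of $\mathbb{R}$, $\mathbb{C}$, or $\mathbb{H}$. It therefore suffices to show that $\dim_\mathbb{R}\End_{T^\mathbb{R}}(W)=1$, or equivalently, that every $\sigma\in\End_{T^\mathbb{R}}(W)$ is a real scalar multiple of the identity.

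First I would extend each $\sigma\in\End_{T^\mathbb{R}}(W)$ by $\mathbb{C}$-linearity to an endomorphism $\sigma\otimes 1$ of $W^\mathbb{C}$; concretely, $(\sigma\otimes 1)(w+\boldsymbol{i}w')=\sigma(w)+\boldsymbol{i}\sigma(w')$ for $w,w'\in W$. Because $T^\mathbb{C}=T^\mathbb{R}+\boldsymbol{i}T^\mathbb{R}$ by \eqref{eq:TC=TR+iTR} and $\sigma$ is $T^\mathbb{R}$-linear, the extension $\sigma\otimes 1$ is $T^\mathbb{C}$-linear, i.e.\ $\sigma\otimes 1\in\End_{T^\mathbb{C}}(W^\mathbb{C})$. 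Now invoke Theorem~\ref{thm:complexification} to conclude that $W^\mathbb{C}$ is an irreducible $T^\mathbb{C}$-module; Schur's lemma over $\mathbb{C}$ then yields $\End_{T^\mathbb{C}}(W^\mathbb{C})=\mathbb{C}\cdot I$, so there exists $z\in\mathbb{C}$ with $\sigma\otimes 1=zI$.

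Writing $z=\alpha+\boldsymbol{i}\beta$ with $\alpha,\beta\in\mathbb{R}$, the identity $\sigma(w)=zw$ for $w\in W$ rearranges as $\sigma(w)-\alpha w=\boldsymbol{i}\beta w$. The left-hand side lies in $W$ while the right-hand side lies in $\boldsymbol{i}W$, and the sum $W+\boldsymbol{i}W$ in \eqref{W_C:C-v.s} is direct over $\mathbb{R}$, so $\beta w=0$ for every $w\in W$. Since $W\ne 0$, this forces $\beta=0$; hence $z\in\mathbb{R}$ and $\sigma=\alpha I\in\mathbb{R}\cdot I$. Consequently $\End_{T^\mathbb{R}}(W)=\mathbb{R}\cdot I\cong\mathbb{R}$ as $\mathbb{R}$-algebras.

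The only potential subtlety is confirming that $\sigma\otimes 1$ is genuinely a well-defined $T^\mathbb{C}$-linear endomorphism of $W^\mathbb{C}$, but this is immediate from the $\mathbb{R}$-direct-sum decomposition $W^\mathbb{C}=W+\boldsymbol{i}W$ combined with \eqref{eq:TC=TR+iTR}. The main load-bearing step is the real-and-imaginary-parts separation in the third paragraph, and it depends crucially on the irreducibility of $W^\mathbb{C}$ supplied by Theorem~\ref{thm:complexification}; without that theorem, Schur's lemma would give only that $\End_{T^\mathbb{C}}(W^\mathbb{C})$ is a direct sum of matrix algebras, and the above reduction to $\mathbb{R}$ would fail.
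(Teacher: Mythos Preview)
Your argument is correct and there is no circularity: Theorem~\ref{thm:complexification} is proved in the paper using only Theorem~\ref{mainthm}, before Lemma~\ref{lem:End_T(W)=R} is stated, so you may invoke it here. The extension $\sigma\otimes 1$ is indeed well-defined and $T^\mathbb{C}$-linear, and the real/imaginary splitting via \eqref{W_C:C-v.s} cleanly forces $z\in\mathbb{R}$.

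However, your route differs from the paper's. The paper does not pass through the complexification at all; instead it works directly with sharpness (Theorem~\ref{mainthm}). Setting $U_0=E_r^*W$, one has $\dim_\mathbb{R} U_0=1$, and every $\sigma\in\End_{T^\mathbb{R}}(W)$ maps $U_0$ to itself because $\sigma$ commutes with $E_r^*$. The restriction map $\varphi:\End_{T^\mathbb{R}}(W)\to\End(U_0)\cong\mathbb{R}$ is then shown to be an $\mathbb{R}$-algebra isomorphism: injectivity follows from $W=TU_0$, and surjectivity from $\varphi(\operatorname{id}_W)=1$. This is a more elementary and self-contained argument, using only the one-dimensionality of $E_r^*W$; your approach, by contrast, packages the same underlying sharpness into the stronger statement that $W^\mathbb{C}$ is irreducible, and then appeals to Schur over $\mathbb{C}$. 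Both work, but the paper's version avoids the detour through $\mathbb{C}$ and makes the dependence on sharpness more transparent.
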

\begin{proof}
Let $r$ denote the endpoint of $W$.
Set $U_0 = E_r^*W$.
Since $U_0 \neq 0$ and $W$ is irreducible, we have $W = T U_0$.
Let $\End (U_0)$ denote the $\mathbb{R}$-algebra consisting of the $\mathbb{R}$-linear maps from $U_0$ to $U_0$.
Since $\dim U_0 = 1$ by Theorem \ref{mainthm}, every element of $\End (U_0)$ acts on $U_0$ as multiplication by a real scalar.
Hence the $\mathbb{R}$-algebra $\End (U_0)$ is isomorphic to $\mathbb{R}$.

\smallskip
Next, pick $\sigma \in \End_T(W)$.
Since $\sigma$ is $T$-linear, we have
$$
	\sigma(U_0) = \sigma(E_r^*W) = E_r^*\sigma(W) \subseteq E_r^*W=U_0.
$$ 
Thus the restriction map
$$
	\varphi:\End_T(W) \longrightarrow \End (U_0),\qquad
	\varphi(\sigma) = \sigma|_{U_0}
$$
is well-defined.
We claim that $\varphi$ is an $\mathbb{R}$-algebra isomorphism.
To prove this claim, we first show that $\varphi$ is injective.
Suppose $\varphi(\sigma)=0$.
Then $\sigma$ vanishes on $U_0$.
Since $W=TU_0$, for any $w\in W$ we can write $w=Bu$ with some $B\in T$ and $u\in U_0$.
By $T$-linearity,
$$
	\sigma(w) = \sigma(Bu) = B\,\sigma(u) = B \cdot 0 = 0.
$$
Thus $\sigma=0$ on $W$, and hence $\varphi$ is injective.
Next, we show that $\varphi$ is surjective.
Since the identity map $\operatorname{id}_W$ on $W$ belongs to $\End_T(W)$, we have $\varphi(\operatorname{id}_W) = \operatorname{id}_{U_0} = 1 \in \mathbb{R}$.
Thus the image of $\End_T(W)$ under $\varphi$ is an $\mathbb{R}$-subalgebra of $\mathbb{R}$ containing $1$, which must be all of $\mathbb{R}$.
Therefore, $\varphi$ is surjective.
By the above comments, $\varphi$ is an $\mathbb{R}$-algebra isomorphism.
The result follows.
\end{proof}

We are now ready to prove Theorem \ref{thm:Wedderburn}.

\begin{proof}[Proof of Theorem \ref{thm:Wedderburn}]
We first show that $\mathbb{D}_i = \mathbb{R}$ for $1 \leq i \leq \ell$.
Recall the Wedderburn decomposition of $T^\mathbb{R}$ from \eqref{eq:Wed decomp over R}.
For $1 \leq i \leq \ell$, we abbreviate $S_i = \Mat_{m_i}(\mathbb{D}_i)$, which is the $i$-th component of the direct sum in \eqref{eq:Wed decomp over R}.
Let $W_i$ denote an irreducible $T^\mathbb{R}$-module corresponding to $S_i$.
Let $\End_{S_i}(W_i)$ denote the $\mathbb{R}$-algebra consisting of the $S_i$-linear maps from $W_i \to W_i$.
Since every simple component of $T^\mathbb{R}$ other than $S_i$ acts as zero on $W_i$, the $T^\mathbb{R}$-action on $W_i$ coincides with the $S_i$-action on $W_i$. 
It follows that $\End_{T^\mathbb{R}}(W_i) = \End_{S_i}(W_i)$.
By \cite[Theorem~3.3]{Lam2001}, the $\mathbb{R}$-algebra $\End_{S_i} (W_i)$ is isomorphic to $\mathbb{D}_i$.
Therefore, we have an $\mathbb{R}$-algebra isomorphism
\begin{equation}\label{EndT(Wi)=Di}
	\End_{T^\mathbb{R}}(W_i) \cong \mathbb{D}_i.
\end{equation}
On the other hand, by Lemma \ref{lem:End_T(W)=R} we have an $\mathbb{R}$-algebra isomorphism 
\begin{equation}\label{EndT(Wi)=R}
	\End_{T^\mathbb{R}}(W_i) \cong \mathbb{R}.
\end{equation}
By \eqref{EndT(Wi)=Di}, \eqref{EndT(Wi)=R}, we have $\mathbb{D}_i = \mathbb{R}$ for  $1 \leq i \leq \ell$.

\smallskip
Next, we show that $\ell = h$ and $m_i = n_i$ for $1 \leq i \leq \ell$.
We complexify \eqref{eq:Wed decomp over R} to obtain $\mathbb{C}$-algebra isomorphisms
\begin{align*}
	T^\mathbb{C} 
	= T^\mathbb{R} \otimes \mathbb{C}
	& \cong  \left(\bigoplus_{i=1}^\ell \Mat_{m_i}(\mathbb{R})\right) \otimes \mathbb{C} \\ 
	& = \bigoplus_{i=1}^\ell \left( \Mat_{m_i}(\mathbb{R}) \otimes \mathbb{C} \right) \\
	& \cong \bigoplus_{i=1}^\ell \Mat_{m_i}(\mathbb{C}).
\end{align*}
Comparing this with \eqref{eq:Wed decomp over C}, we obtain $\ell = h$ and $m_i = n_i$ for $1 \leq i \leq \ell$.
The proof is complete.
\end{proof}

We finish this section with some consequences of Theorems~\ref{thm:complexification} and \ref{thm:WC iso}.

\begin{lemma}\label{lem:WC structure}
Let $W$ denote an irreducible $T^\mathbb{R}$-module.  
Then the irreducible $T^\mathbb{C}$-module $W^\mathbb{C}$ has the same endpoint, dual endpoint, diameter, and shape as $W$.  
\end{lemma}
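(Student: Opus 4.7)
The plan is to exploit the explicit description of the complexification $W^{\mathbb{C}} = W + \boldsymbol{i}W$ from \eqref{W_C:C-v.s} together with the fact that both the primitive idempotents $E_i$ of $A$ and the dual primitive idempotents $E_i^{*}$ of $A^{*}$ lie in $\Mat_X(\mathbb{R})$. This will immediately match the nonvanishing patterns of $E_i^{*}W$, $E_i W$ with those of $E_i^{*}W^{\mathbb{C}}$, $E_i W^{\mathbb{C}}$ and, after one dimension count, the shape.

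First, I would verify the endpoint. Since $E_i^{*}$ has real entries and $W^{\mathbb{C}} = W + \boldsymbol{i}W$ (direct sum over $\mathbb{R}$), we have
\begin{equation*}
	E_i^{*} W^{\mathbb{C}} \;=\; E_i^{*} W \,+\, \boldsymbol{i}\, E_i^{*} W \qquad (0 \le i \le D),
\end{equation*}
again a direct sum over $\mathbb{R}$. Consequently $E_i^{*}W^{\mathbb{C}} \ne 0$ if and only if $E_i^{*}W \ne 0$, so the set
$\{i : 0 \le i \le D, \, E_i^{*}W \ne 0\}$ coincides for $W$ and for $W^{\mathbb{C}}$. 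By the definitions recalled in Section \ref{sec:T-alg}, the minimum of this set is the endpoint, and its cardinality minus one is the diameter; so $W$ and $W^{\mathbb{C}}$ share the same endpoint $r$ and the same diameter $d$.

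Second, the same argument with $E_i$ in place of $E_i^{*}$ (recalling from \eqref{Ei transpose} and the paragraph following \eqref{E_0=|X|^-1J} that the primitive idempotents of $A$ are real matrices) shows $E_i W^{\mathbb{C}} = E_i W + \boldsymbol{i}\, E_i W$, and hence $W$ and $W^{\mathbb{C}}$ share the same dual endpoint $s$. Finally, for the shape, using the displayed decomposition we obtain
\begin{equation*}
	\dim_{\mathbb{C}}\bigl(E_{r+i}^{*} W^{\mathbb{C}}\bigr) \;=\; \dim_{\mathbb{R}}\bigl(E_{r+i}^{*} W\bigr) \qquad (0 \le i \le d),
\end{equation*}
since an $\mathbb{R}$-basis of $E_{r+i}^{*}W$ is also a $\mathbb{C}$-basis of $E_{r+i}^{*}W + \boldsymbol{i}E_{r+i}^{*}W$. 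Thus the shapes of $W$ and $W^{\mathbb{C}}$ coincide entry-by-entry, completing the proof.

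There is no real obstacle here: the lemma is essentially bookkeeping once one observes that $A$, $A^{*}$, and all the idempotents $\{E_i\}_{i=0}^{D}$, $\{E_i^{*}\}_{i=0}^{D}$ are real matrices, so they respect the $\mathbb{R}$-decomposition $W^{\mathbb{C}} = W \oplus \boldsymbol{i}W$. The only subtlety worth flagging is to make sure one compares $\mathbb{R}$-dimension with $\mathbb{C}$-dimension correctly when reading off the shape, which is handled by the displayed equality above.
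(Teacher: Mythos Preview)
Your proof is correct and follows essentially the same approach as the paper: both use the decomposition $E_j^{*}W^{\mathbb{C}} = E_j^{*}W + \boldsymbol{i}\,E_j^{*}W$ (and likewise for $E_j$) to match nonvanishing patterns and dimensions. The paper's write-up is slightly more compact, but the argument is the same.
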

\begin{proof} 
For $0 \leq j \leq D$, we have $E_j^*W^\mathbb{C} = E_j^*W + \boldsymbol{i} E_j^*W$. 
Thus, the subspace $E_j^*W^\mathbb{C}$ is the $\mathbb{C}$-span of $E_j^*W$.
By this comment, we have
$$
	\dim_{\mathbb{R}}(E_j^*W) = \dim_{\mathbb{C}}(E_j^*W^\mathbb{C}).
$$
In particular,
$$
	E_j^*W = 0 \quad  \text{if and only if} \quad E_j^*W^\mathbb{C} = 0.
$$
Similarly, we have
$$
	\dim_{\mathbb{R}}(E_jW) = \dim_{\mathbb{C}}(E_jW^\mathbb{C}),
$$
and
$$
	E_jW = 0 \quad  \text{if and only if} \quad E_jW^\mathbb{C} = 0.
$$
The result follows from these comments along with the definitions of the endpoint, dual endpoint, diameter, and shape of $W$.
\end{proof}

\begin{corollary}\label{cor:decomp_preserved}
Let $W$ denote a $T^{\mathbb{R}}$-module.
Consider a direct sum decomposition of $W$ into irreducible $T^\mathbb{R}$-modules
\begin{equation}\label{eq:W=ds W_i over R}
	W = \sum_i W_i.
\end{equation}
Then the complexification $W^\mathbb{C}$ has a direct sum decomposition 
\begin{equation}\label{eq:decomp W C(2)}
   W^{\mathbb{C}} = \sum_i W_i^{\mathbb{C}}.
\end{equation}
Moreover, the following {\rm(i)}, {\rm(ii)} hold.
\begin{enumerate}[label=(\roman*),font=\rm]
\setlength\itemsep{0pt}
	\item Each $W_i^{\mathbb{C}}$ is an irreducible $T^{\mathbb{C}}$-module with the same endpoint, dual endpoint, diameter, and shape as $W_i$.  
	\item The multiplicity of $W_i^{\mathbb{C}}$ in $W^{\mathbb{C}}$ equals the multiplicity of $W_i$ in $W$.  
\end{enumerate}
\end{corollary}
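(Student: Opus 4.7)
The plan is to obtain the decomposition \eqref{eq:decomp W C(2)} essentially for free from the fact that complexification commutes with direct sums, and then to read off assertions (i) and (ii) from the three theorems of this section together with Lemma~\ref{lem:WC structure}. Concretely, starting from \eqref{eq:W=ds W_i over R}, I would tensor both sides over $\mathbb{R}$ with $\mathbb{C}$. Since $-\otimes_{\mathbb{R}}\mathbb{C}$ is an exact functor that preserves arbitrary direct sums, this immediately gives
$$
   W^{\mathbb{C}} = W\otimes_{\mathbb{R}}\mathbb{C} = \Bigl(\sum_i W_i\Bigr)\otimes_{\mathbb{R}}\mathbb{C} = \sum_i \bigl(W_i\otimes_{\mathbb{R}}\mathbb{C}\bigr) = \sum_i W_i^{\mathbb{C}},
$$
and the sum on the right is direct over $\mathbb{C}$, since the image of an $\mathbb{R}$-basis of $\bigoplus_i W_i$ in $W^{\mathbb{C}}$ is a $\mathbb{C}$-basis. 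The $T^{\mathbb{C}}$-action is the one inherited from $T^{\mathbb{C}} = T^{\mathbb{R}}\otimes_{\mathbb{R}}\mathbb{C}$, so this is a $T^{\mathbb{C}}$-module decomposition.

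For part (i), each $W_i$ is an irreducible $T^{\mathbb{R}}$-module, so by Theorem~\ref{thm:complexification} the complexification $W_i^{\mathbb{C}}$ is an irreducible $T^{\mathbb{C}}$-module. The assertion that $W_i^{\mathbb{C}}$ has the same endpoint, dual endpoint, diameter, and shape as $W_i$ is exactly Lemma~\ref{lem:WC structure}, applied to each $W_i$ in turn.

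For part (ii), fix an irreducible $T^{\mathbb{R}}$-module $W_j$ appearing in \eqref{eq:W=ds W_i over R}. Its multiplicity in $W$ is the number of indices $i$ with $W_i\cong W_j$ as $T^{\mathbb{R}}$-modules, while the multiplicity of $W_j^{\mathbb{C}}$ in $W^{\mathbb{C}}$ is the number of indices $i$ with $W_i^{\mathbb{C}}\cong W_j^{\mathbb{C}}$ as $T^{\mathbb{C}}$-modules. By Theorem~\ref{thm:WC iso}, the two conditions $W_i\cong W_j$ over $T^{\mathbb{R}}$ and $W_i^{\mathbb{C}}\cong W_j^{\mathbb{C}}$ over $T^{\mathbb{C}}$ are equivalent, so the two counts agree.

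The one subtle point, and the place I expect to have to be careful, is well-definedness of multiplicity: a priori one has to know that the multiplicity with which a given irreducible appears in a decomposition does not depend on the chosen decomposition. This is a consequence of semisimplicity, which holds for both $T^{\mathbb{R}}$ and $T^{\mathbb{C}}$ since each is closed under the conjugate-transpose map (cf.\ \cite[Lemma~3.4]{Ter1992JAC}); with that observation in hand, the argument above reduces the whole statement to the already-established Theorems~\ref{thm:complexification} and~\ref{thm:WC iso} together with Lemma~\ref{lem:WC structure}, and nothing beyond a routine verification is needed.
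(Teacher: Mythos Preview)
Your proposal is correct and follows exactly the same approach as the paper, which gives a one-line proof citing precisely Theorem~\ref{thm:complexification}, Theorem~\ref{thm:WC iso}, and Lemma~\ref{lem:WC structure}. You have simply spelled out in detail the routine verifications (that complexification commutes with direct sums, and that multiplicities are well-defined by semisimplicity) which the paper leaves implicit.
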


\begin{proof}
This follows from Theorem~\ref{thm:complexification}, Theorem~\ref{thm:WC iso} and Lemma~\ref{lem:WC structure}.
\end{proof}

\section{The algebras $E_1^*TE_1^*$ and $E_1TE_1$}\label{sec:E*1TE*1}

Recall the $Q$-polynomial distance-regular graph $\Gamma$.
Recall $\mathbb{F} = \mathbb{R}$ or $\mathbb{F} = \mathbb{C}$.
Fix $x\in X$ and write $T=T(x)$.
In this section, we discuss the algebras $E_1^* T E_1^*$ and $E_1 T E_1$.
Let $W$ denote an irreducible $T$-module.
Consider the subspaces $E_1^*W$ and $E_1W$.
If $W$ is primary, then both $E_1^*W$ and $E_1W$ have dimension one, as discussed in \eqref{eq:Ei*W=1}.
In the next lemma, we assume that $W$ is nonprimary.

\begin{lemma}\label{lem:dimE1W=1}
Let $W$ denote a nonprimary irreducible $T$-module with endpoint $r$, dual endpoint $s$, and diameter $d$.
Then $1 \leq r, s \leq D$.
Moreover, the following {\rm(i)}--{\rm(iv)} hold.
\begin{itemize}
\setlength\itemsep{0pt}
	\item[{\rm (i)}] If $r=1$, then $E_1^*W$ has dimension one.
	\item[{\rm (ii)}] If $2 \leq r \leq D$, then $E_1^*W=0$.
	\item[{\rm (iii)}] If $s=1$, then $E_1 W$ has dimension one.
	\item[{\rm (iv)}] If $2 \leq s \leq D$, then $E_1 W=0$.	
\end{itemize}
\end{lemma}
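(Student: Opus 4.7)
The plan is to reduce every assertion to two previously established facts: the sharpness of $W$ (Theorem~\ref{mainthm} over $\mathbb{R}$, and Nomura--Terwilliger over $\mathbb{C}$) together with the standard ``support'' description of the nonvanishing subspaces $E_i^*W$ and $E_iW$.

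First I would establish the ranges $1 \le r,s \le D$. Since $W$ is nonprimary, Lemma~\ref{lem:equi pri Tmod} gives $r \ne 0$ and $s \ne 0$, so $r \ge 1$ and $s \ge 1$. The upper bounds $r \le D$ and $s \le D$ follow from the inequalities $r + d \le D$ and $s + d \le D$ recalled in Section~\ref{sec:T-alg} and Section~\ref{sec:T-mod TDpairs}, together with $d \ge 0$.

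For parts (ii) and (iv), I would quote directly the support property: $E_i^*W \ne 0$ if and only if $r \le i \le r+d$ (from \cite[Lemma~3.9]{Ter1992JAC}, cited in Section~\ref{sec:T-alg}), and $E_iW \ne 0$ if and only if $s \le i \le s+d$ (from \cite[Lemma~3.12]{Ter1992JAC}, cited in Section~\ref{sec:T-mod TDpairs}). If $2 \le r \le D$, then $1 \notin [r,\,r+d]$, whence $E_1^*W = 0$; the argument for $E_1W$ when $2 \le s \le D$ is identical.

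For parts (i) and (iii), I would invoke sharpness. If $r = 1$, then by Theorem~\ref{mainthm} (or by the Nomura--Terwilliger result when $\mathbb{F} = \mathbb{C}$) the shape entry $\rho_0 = \dim(E_r^*W) = \dim(E_1^*W)$ equals $1$, giving (i). For (iii), I would combine sharpness with Lemma~\ref{lem: dim sym}, which yields $\dim(E_{s+i}W) = \dim(E_{r+i}^*W) = \rho_i$ for $0 \le i \le d$; taking $i = 0$ gives $\dim(E_sW) = \rho_0 = 1$, and if $s = 1$ this is precisely $\dim(E_1W) = 1$. There is no genuine obstacle here: the lemma is essentially a bookkeeping consequence of sharpness plus the shape/support dictionary, and the only thing to be careful about is ensuring the sharpness input is available over $\mathbb{R}$, which is exactly what Theorem~\ref{mainthm} provides.
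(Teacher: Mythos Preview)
Your proposal is correct and follows essentially the same approach as the paper's proof, which cites Lemma~\ref{lem:equi pri Tmod} for the ranges of $r,s$, Theorem~\ref{mainthm} for (i), (i) together with Lemma~\ref{lem: dim sym} for (iii), and the definition of endpoint and dual endpoint for (ii) and (iv). Your version is simply more explicit, in particular in separately noting the $\mathbb{F}=\mathbb{C}$ case (via Lemma~\ref{A,A*:sharp}) and in spelling out the support description underlying (ii) and (iv).
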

\begin{proof}
We have $1 \leq r, s \leq D$ by Lemma \ref{lem:equi pri Tmod}. \\
(i): By Theorem \ref{mainthm}.\\
(iii): By (i) and Lemma \ref{lem: dim sym}\\
(ii),(iv): By definition of endpoint and dual endpoint.
\end{proof}

\begin{proposition}\label{lem:E1*TE1* comm}
The algebras $E_1^* T E_1^*$ and $E_1 T E_1$ are commutative.
\end{proposition}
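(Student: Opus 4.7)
The plan is to decompose the standard module $V$ as an orthogonal direct sum of irreducible $T$-modules and verify commutativity of $E_1^*TE_1^*$ on each summand. For an irreducible $T$-module $W$ and an element $B = E_1^*YE_1^* \in E_1^*TE_1^*$, the identity $E_1^*(I-E_1^*)=0$ gives $Bw = B(E_1^*w) \in E_1^*W$ for every $w \in W$. Consequently, for $B_1, B_2 \in E_1^*TE_1^*$, the commutator $[B_1,B_2]$ acting on $W$ is governed entirely by the restrictions $B_1|_{E_1^*W}$ and $B_2|_{E_1^*W}$; more precisely, a short computation shows $[B_1,B_2]w = [B_1|_{E_1^*W},\,B_2|_{E_1^*W}](E_1^*w)$. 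Commutativity on $W$ will therefore follow once we show $\dim(E_1^*W) \leq 1$.

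To establish this dimension bound, I split cases according to the endpoint $r$ of $W$. If $W$ is primary, then Lemma~\ref{lem:equi pri Tmod} gives $r=0$ together with shape $(1,1,\ldots,1)$, so $\dim(E_1^*W) = 1$. If $W$ is nonprimary with $r=1$, then Lemma~\ref{lem:dimE1W=1}(i), which rests on the sharpness result Theorem~\ref{mainthm}, yields $\dim(E_1^*W) = 1$. If $W$ is nonprimary with $r \geq 2$, then Lemma~\ref{lem:dimE1W=1}(ii) gives $E_1^*W = 0$. In every case $\dim(E_1^*W) \leq 1$, so the restrictions commute trivially, and summing over the irreducible constituents of $V$ yields $B_1B_2 = B_2B_1$ in $\Mat_X(\mathbb{F})$. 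The argument for $E_1TE_1$ is entirely parallel, with endpoint replaced by dual endpoint and Lemma~\ref{lem:dimE1W=1}(iii),(iv) used in place of (i),(ii).

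There is no real obstacle here once Theorem~\ref{mainthm} is in hand: sharpness is precisely what forces the relevant subspaces $E_1^*W$ (and $E_1W$) to be at most one-dimensional on every irreducible $T$-module, and commutativity on a space of dimension at most one is automatic. One could alternatively invoke Lemma~\ref{NT:Thm2.6} in the nonprimary case with $r=1$, but since $E_1^*W$ is already one-dimensional in that case, the stronger statement is unnecessary.
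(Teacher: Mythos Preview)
Your proof is correct and follows essentially the same route as the paper: decompose $V$ into irreducible $T$-modules, use the primary case together with Lemma~\ref{lem:dimE1W=1} to conclude $\dim(E_1^*W)\le 1$ for every irreducible $W$, and infer commutativity of the action on each summand. Your explicit reduction of the commutator on $W$ to the restriction on $E_1^*W$ just makes precise what the paper leaves implicit.
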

\begin{proof}
We first show that $E_1^* T E_1^*$ is commutative.  
Since $V$ is an orthogonal direct sum of irreducible $T$-modules, it suffices to show that the action of $E_1^* T E_1^*$ on any irreducible $T$-module is commutative.
Let $W$ denote an irreducible $T$-module.
By \eqref{eq:Ei*W=1} and Lemma \ref{lem:dimE1W=1}(i),(ii) either $E_1^*W$ has dimension one or $E_1^*W = 0$.
In either case, the action of $E_1^* T E_1^*$ on $W$ is commutative.
We have shown that $E_1^* T E_1^*$ is commutative.
The argument for $E_1 T E_1$ is similar.
\end{proof}

Recall the all-ones matrix $\mathbb{J}$ from Section \ref{sec:BMalg}.

\begin{lemma}\label{lem:E1*ME1*,E1TE1}
The following {\rm(i)}, {\rm(ii)} hold.
\begin{itemize}
\setlength\itemsep{0pt}
	\item[\rm (i)] $E_1^* M E_1^*$ is spanned by $E_1^*, E_1^* \mathbb{J} E_1^*, E_1^* A E_1^*$.
 	\item[\rm (ii)] $E_1 M E_1$ is spanned by $E_1, E_1 E_0^* E_1, E_1 A^* E_1$.
\end{itemize}
\end{lemma}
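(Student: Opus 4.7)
The plan is to expand each of the spaces in terms of a standard basis and then use the vanishing relations collected in Section~\ref{sec:Q-poly Property}. For (i), since $\{A_h\}_{h=0}^D$ is a basis for the Bose-Mesner algebra $M$, we have
$$
E_1^* M E_1^* = \operatorname{Span}\{E_1^* A_h E_1^* \mid 0 \leq h \leq D\}.
$$
First I would specialize \eqref{T rel(1): h >2i} to $i=1$ to get $E_1^* A_h E_1^* = 0$ for all $h \geq 3$. That immediately reduces the spanning set to the three terms indexed by $h \in \{0, 1, 2\}$. The terms $h=0$ and $h=1$ give $E_1^*$ and $E_1^* A E_1^*$ directly. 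To handle $h=2$, I would apply the identity $\mathbb{J} = \sum_{h=0}^D A_h$ from \eqref{all-one mat}, sandwich both sides with $E_1^*$, and solve for $E_1^* A_2 E_1^*$; the contributions from $h \geq 3$ on the right-hand side vanish by what we already showed. This expresses $E_1^* A_2 E_1^*$ as an $\mathbb{F}$-linear combination of $E_1^*$, $E_1^* A E_1^*$, and $E_1^* \mathbb{J} E_1^*$, which proves (i).

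For (ii), I would run the dual argument verbatim, using the dual Bose-Mesner algebra $M^*$ in place of $M$. Since $\{A_h^*\}_{h=0}^D$ is a basis for $M^*$, we have
$$
E_1 M^* E_1 = \operatorname{Span}\{E_1 A_h^* E_1 \mid 0 \leq h \leq D\}.
$$
Specializing \eqref{T rel(2): h>2i} to $i=1$ kills every term with $h \geq 3$. The terms $h = 0$ and $h = 1$ give $E_1$ and $E_1 A^* E_1$. For the $h = 2$ term, I would use the dual completeness identity $|X| E_0^* = \sum_{h=0}^D A_h^*$ from \eqref{eq: dual mats}, sandwich both sides with $E_1$, and solve for $E_1 A_2^* E_1$; again the $h \geq 3$ terms drop out, leaving $E_1 A_2^* E_1$ as an $\mathbb{F}$-linear combination of $E_1$, $E_1 A^* E_1$, and $E_1 E_0^* E_1$.

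I do not anticipate any real obstacle. The entire proof is bookkeeping that combines two ingredients in each case: the ``half diameter'' vanishing relations \eqref{T rel(1): h >2i}, \eqref{T rel(2): h>2i} at $i=1$, which eliminate all but three indices, and the completeness identities $\mathbb{J} = \sum_h A_h$ and $|X| E_0^* = \sum_h A_h^*$, which convert the remaining $h=2$ term into the distinguished spanning element $E_1^* \mathbb{J} E_1^*$ (resp.\ $E_1 E_0^* E_1$). The only conceptual point worth flagging is that this lemma furnishes the explicit $3$-element spanning sets that, together with Proposition~\ref{lem:E1*TE1* comm} and Lemmas~\ref{NT:Thm2.6}(i), \ref{NT:Thm2.6 dual}(i), will drive the identification of the algebras in \eqref{eq: four algebras} for the endpoints $r=1$ (and, after the corresponding analysis at the other end, for $r=D$).
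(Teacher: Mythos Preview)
Your proposal is correct and follows essentially the same argument as the paper's proof: in each case you use the appropriate basis $\{A_h\}$ or $\{A_h^*\}$, apply the vanishing relation \eqref{T rel(1): h >2i} or \eqref{T rel(2): h>2i} at $i=1$ to eliminate $h\ge 3$, and then invoke the completeness identity $\mathbb{J}=\sum_h A_h$ or $|X|E_0^*=\sum_h A_h^*$ to trade $E_1^*A_2E_1^*$ (resp.\ $E_1A_2^*E_1$) for the distinguished element. You also correctly read part (ii) as concerning $E_1 M^* E_1$, which is what the paper actually proves.
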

\begin{proof}
(i): Since $M$ has a basis $\{A_i\}_{i=0}^D$, the subspace $E_1^* M E_1^*$ is spanned by $\{E_1^* A_i E_1^*\}_{i=0}^D$.
Applying \eqref{T rel(1): h >2i} to $\{E_1^* A_i E_1^*\}_{i=0}^D$, we find that $E_1^* A_i E_1^* = 0$ for $3 \leq i \leq D$. 
Therefore, $E_1^* M E_1^*$ is spanned by $E_1^*$, $E_1^* A E_1^*$ and $E_1^* A_2 E_1^*$. 
Recall that $\mathbb{J} = \sum_{i=0}^D A_i$.
Using this and \eqref{T rel(1): h >2i}, we have
\begin{equation*}
	E_1^*\mathbb{J}E_1^* = E_1^* + E_1^* A E_1^* + E_1^* A_2 E_1^*.
\end{equation*}
From this equation, $E_1^* A_2 E_1^*$ can be expressed in terms of $E_1^*$, $E_1^*\mathbb{J}E_1^*$ and $E_1^* A E_1^*$.
The result follows.\\
(ii): Since $M^*$ has a basis $\{A_i^*\}_{i=0}^D$, the subspace $E_1 M^* E_1$ is spanned by $\{E_1 A_i^* E_1\}_{i=0}^D$.
Applying \eqref{T rel(2): h>2i} to $\{E_1 A_i^* E_1\}_{i=0}^D$, we find that $E_1 A_i^* E_1 = 0$ for $3 \leq i \leq D$. 
Therefore, $E_1 M^* E_1$ is spanned by $E_1$, $E_1 A^* E_1$ and $E_1 A_2^* E_1$. 
Recall that $|X|E_0^* = \sum_{i=0}^D A_i^*$.
Using this and \eqref{T rel(2): h>2i}, we have
\begin{equation*}
	|X| E_1 E_0^* E_1^* = E_1 + E_1 A^* E_1 + E_1 A_2^* E_1.
\end{equation*}
From this equation, $E_1 A_2^* E_1$ can be expressed in terms of $E_1$, $E_1 E_0^* E_1$ and $E_1 A^* E_1$.
The result follows.
\end{proof}

\begin{lemma}\label{lem:W iff mu}
Let $W$ and $W'$ denote irreducible $T$-modules with endpoint $1$.
Let $\mu$ (resp. $\mu'$) denote the eigenvalue of $E_1^*AE_1^*$ on $E_1^*W$ (resp. $E_1^*W'$).
Then the $T$-modules $W$ and $W'$ are isomorphic if and only if $\mu = \mu'$.
\end{lemma}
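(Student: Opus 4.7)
The forward direction is routine. Given a $T$-module isomorphism $\sigma:W\to W'$, the identity $\sigma\circ B=B\circ\sigma$ with $B=E_1^*\in T$ shows that $\sigma$ restricts to a bijection $E_1^*W\to E_1^*W'$; both spaces have dimension one by Theorem \ref{mainthm} together with Lemma \ref{lem:dimE1W=1}(i). Taking $B=E_1^*AE_1^*\in T$ then forces $\mu\,\sigma(v)=\mu'\,\sigma(v)$ for any nonzero $v\in E_1^*W$, hence $\mu=\mu'$.

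For the converse, assume $\mu=\mu'$. The strategy is to show that the one-dimensional characters by which $E_1^*TE_1^*$ acts on $E_1^*W$ and on $E_1^*W'$ coincide, and then construct a $T$-linear map $W\to W'$ directly. By Lemma \ref{NT:Thm2.6}(i), the image of $E_1^*TE_1^*$ in $\End(W)$ is already generated by the image of $E_1^*ME_1^*$, and by Lemma \ref{lem:E1*ME1*,E1TE1}(i) the latter is spanned by $E_1^*$, $E_1^*\mathbb{J}E_1^*$, and $E_1^*AE_1^*$. On the one-dimensional space $E_1^*W$, the first element acts as $1$ and the third as $\mu$. For the middle generator I will use $\mathbb{J}=|X|\,E_0$ together with the observation that $W$ is nonprimary (its endpoint is $1>0$, so Lemma \ref{lem:equi pri Tmod} applies) and hence orthogonal to the primary module; since $E_0V=\mathrm{Span}\{\mathds{1}\}$ lies in the primary module and $W$ is $T$-invariant, $E_0W\subseteq W\cap\mathrm{Span}\{\mathds{1}\}=0$, so $E_1^*\mathbb{J}E_1^*$ acts as $0$ on $E_1^*W$. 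The same three values ($1$, $0$, $\mu'$) describe the action on $E_1^*W'$, so under $\mu=\mu'$ the characters agree on a spanning set, hence on all of $E_1^*TE_1^*$.

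To produce the isomorphism, I will choose nonzero vectors $w\in E_1^*W$ and $w'\in E_1^*W'$ and define $\phi:W\to W'$ by $\phi(Bw)=Bw'$ for $B\in T$; irreducibility of $W$ gives $W=Tw$, so this covers the domain. Well-definedness reduces to $Bw=0\Rightarrow Bw'=0$, which I will establish by writing $E_1^*DBw=(E_1^*DBE_1^*)w$ for each $D\in T$, invoking the common character on $E_1^*TE_1^*$ to deduce $E_1^*DBw'=0$ for all $D\in T$, and then using that any nonzero vector $u\in W'$ satisfies $E_1^*T u\neq 0$ (since $T u=W'$ and $E_1^*W'\neq 0$) to conclude $Bw'=0$. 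Then $\phi$ is a nonzero $T$-linear map between irreducible $T$-modules and is therefore a $T$-module isomorphism. The main subtle point will be the vanishing $E_0W=0$ for nonprimary $W$; the remainder is bookkeeping with the two lemmas just cited.
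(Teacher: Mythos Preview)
Your proof is correct and takes essentially the same approach as the paper's: both directions are argued the same way, resting (via Lemma~\ref{NT:Thm2.6}(i) together with Lemma~\ref{lem:E1*ME1*,E1TE1}(i)) on the fact that the $E_1^*TE_1^*$-character on $E_1^*W$ is determined by the three values $1,0,\mu$. The only minor variation is in verifying $Bw=0\Rightarrow Bw'=0$: the paper computes $\lVert Bw\rVert^2=\chi_W(E_1^*\overline{B}^\top BE_1^*)\,\lVert w\rVert^2$ and compares a single scalar, whereas you show $E_1^*DBw'=0$ for every $D\in T$ and then invoke irreducibility of $W'$ to conclude $Bw'=0$.
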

\begin{proof}
Pick $0 \neq w \in E_1^*W$ and $0 \neq w' \in E_1^*W'$.
By Lemma \ref{lem:dimE1W=1}(i), we have $E_1^*W = \operatorname{Span}\{w\}$ and $E_1^*W' = \operatorname{Span}\{w'\}$. 
Observe that $w$ (resp. $w'$) is an eigenvector of $E_1^*AE_1^*$ corresponding to the eigenvalue $\mu$ (resp. $\mu'$).

\smallskip
First, suppose that the $T$-modules $W$ and $W'$ are isomorphic.
There exists a $T$-module isomorphism $\sigma: W \to W'$.
Since $w$ (resp. $w'$) is a basis for $E_1^*W$ (resp. $E_1^*W'$), we have $\sigma(w) = \beta w'$ for some $0 \neq \beta \in \mathbb{F}$.
Then  
\begin{equation}
	0 = (\sigma E_1^*AE_1^* - E_1^*AE_1^* \sigma)w = \sigma E_1^*AE_1^* w - E_1^*AE_1^* \sigma(w) = \beta(\mu - \mu')w'.
\end{equation}
Since $\beta \neq 0$ and $w' \neq 0$, it follows that $\mu=\mu'$.

\smallskip
Conversely, suppose that $\mu = \mu'$.
We will show that the $T$-modules $W$ and $W'$ are isomorphic.
Note that $W=Tw$ and $W'=Tw'$ by the irreducibility.
We claim that for any $B\in T$, we have $Bw=0$ if and only if $Bw'=0$.
Observe that $w=E_1^*w$, and thus 
\begin{equation}\label{eq(1):Bw=0 iff...}
	Bw = 0 
	\quad \Longleftrightarrow \quad
	BE_1^*w = 0 
	\quad \Longleftrightarrow \quad
	\lVert BE_1^*w \rVert^2 = 0
	\quad \Longleftrightarrow \quad
	\overline{w}^\top E_1^*\overline{B}^\top B E_1^* w =0.
\end{equation}
Consider the matrix $E_1^*\overline{B}^\top B E_1^*$.
Since $\overline{B}^\top B \in T$, it follows that $E_1^*\overline{B}^\top B E_1^* \in E_1^* T E_1^*$.
By Lemma \ref{NT:Thm2.6}(i) with $r=1$ and Lemma \ref{lem:E1*ME1*,E1TE1}(i), the algebra $E_1^* T E_1^*$ is generated by $E_1^* \mathbb{J} E_1^*$ and $E_1^* A E_1^*$ on $W$, where we note that $E_1^*$ is the multiplicative identity.
Therefore, the restriction of $E_1^*\overline{B}^\top B E_1^*$ to $W$ can be expressed in terms of the restrictions of $E_1^* \mathbb{J} E_1^*$ and $E_1^* A E_1^*$ to $W$.
We now apply this restriction to $w$.
Since $E_1^* \mathbb{J} E_1^*w = 0$ and $E_1^* A E_1^*w = \mu w$, it follows that 
\begin{equation}\label{pf:eq E1*BtBE1*}
	E_1^*\overline{B}^\top B E_1^*w = p(\mu) w,
\end{equation}
for some polynomial $p \in \mathbb{F}[\lambda]$.
Taking the inner product of both sides of \eqref{pf:eq E1*BtBE1*} with $w$ yields
$$
	\overline{w}^\top E_1^*\overline{B}^\top B E_1^*w =  p(\mu) \lVert w \rVert^2.
$$
Therefore, 
\begin{equation}\label{eq(2):Bw=0 iff...}
	\overline{w}^\top E_1^*\overline{B}^\top B E_1^* w =0 
	\quad \Longleftrightarrow \quad	
	p(\mu) \lVert w \rVert^2 = 0
	\quad \Longleftrightarrow \quad	
	p(\mu) = 0.
\end{equation}
By \eqref{eq(1):Bw=0 iff...} and \eqref{eq(2):Bw=0 iff...}, it follows that $Bw=0$ if and only if $p(\mu)=0$.
Similarly, it follows that $Bw'=0$ if and only if $p(\mu)=0$.
Therefore, $Bw=0$ if and only if $Bw'=0$ as claimed.
Next, define the map $\sigma: W \to W'$ that sends $Bw$ to $Bw'$ for all $B \in T$.
By the claim, this map is well-defined.
One verifies that $\sigma$ is a $T$-module homomorphism.
Moreover, $\sigma$ is injective by the claim and surjective by irreducibility.
Consequently, $\sigma$ is a bijection, and hence a $T$-module isomorphism.
The result follows.
\end{proof}

Let $\mathbf{W}$ denote the span of all irreducible $T$-modules with endpoint $1$.

\begin{corollary}\label{cor:W iff mu}
For the $T$-module $\mathbf{W}$, the number of mutually non-isomorphic irreducible $T$-modules with endpoint $1$ is equal to the number of distinct eigenvalues of $E_1^*AE_1^*$ on $E_1^*\mathbf{W}$.
\end{corollary}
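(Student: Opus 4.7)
The plan is to reduce the statement to a direct application of Lemma~\ref{lem:W iff mu}, after recording the eigenstructure of $E_1^*AE_1^*$ on $E_1^*\mathbf{W}$.

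First, I would decompose $\mathbf{W}$ into an orthogonal direct sum of irreducible $T$-submodules with endpoint $1$, say
$$
    \mathbf{W} = \bigoplus_{j=1}^N W_j,
$$
which is possible because $T$ is semisimple (and closed under the conjugate-transpose map), and because by construction $\mathbf{W}$ is the sum of irreducible $T$-modules with endpoint $1$. Applying $E_1^*$ yields $E_1^*\mathbf{W}=\bigoplus_{j=1}^N E_1^*W_j$, and by Lemma~\ref{lem:dimE1W=1}(i) each summand $E_1^*W_j$ is one-dimensional.

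Second, I would examine the action of $E_1^*AE_1^*$. Since $E_1^*AE_1^*\in T$ and each $W_j$ is $T$-invariant, $E_1^*AE_1^*$ preserves $W_j$ and also preserves $E_1^*W_j$. Because $\dim(E_1^*W_j)=1$, the subspace $E_1^*W_j$ is an eigenspace of $E_1^*AE_1^*$ with a well-defined eigenvalue $\mu_j$. Consequently, the restriction of $E_1^*AE_1^*$ to $E_1^*\mathbf{W}$ is diagonalizable, and the set of its distinct eigenvalues on $E_1^*\mathbf{W}$ is exactly $\{\mu_j : 1 \leq j \leq N\}$.

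Third, I would invoke Lemma~\ref{lem:W iff mu}: for any $1\leq j,k\leq N$, the $T$-modules $W_j$ and $W_k$ are isomorphic if and only if $\mu_j=\mu_k$. Therefore the map $W_j\mapsto \mu_j$ descends to a bijection from the set of isomorphism classes of irreducible $T$-submodules with endpoint $1$ appearing in $\mathbf{W}$ onto the set of distinct eigenvalues of $E_1^*AE_1^*$ on $E_1^*\mathbf{W}$, and the two cardinalities agree. There is no real obstacle here; all the substantive work was done in Lemma~\ref{lem:W iff mu} (where the sharpness result Theorem~\ref{mainthm}, via Lemma~\ref{lem:dimE1W=1}(i), guaranteed that $E_1^*W$ is one-dimensional and thus furnishes a single scalar $\mu$), and the present corollary is just the bookkeeping that packages that bijection.
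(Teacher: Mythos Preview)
Your proposal is correct and follows exactly the approach the paper intends: the paper's own proof is simply ``By Lemma~\ref{lem:W iff mu},'' and what you have written is just the explicit unpacking of that reference. There is nothing to add.
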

\begin{proof}
By Lemma \ref{lem:W iff mu}.
\end{proof}

\begin{lemma}\label{lem:E1*TE1*=<E1*ME1*>}
The algebra $E_1^* T E_1^*$ is generated by $E_1^* M E_1^*$.
\end{lemma}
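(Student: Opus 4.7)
\smallskip
The plan is to show $\mathcal{A} := \langle E_1^*ME_1^*\rangle = E_1^*TE_1^*$ via a count that reduces to an interpolation argument in the two commuting generators $E_1^*\mathbb{J}E_1^*$ and $E_1^*AE_1^*$ of $\mathcal{A}$, which together with $E_1^*$ span $E_1^*ME_1^*$ by Lemma \ref{lem:E1*ME1*,E1TE1}(i). Decompose $V = \sum_j W_j$ into irreducible $T$-modules. By Lemma \ref{lem:equi pri Tmod} and Lemma \ref{lem:dimE1W=1}, each $E_1^*W_j$ is $0$- or $1$-dimensional, with the nonzero ones occurring exactly when $W_j$ is the primary module or has endpoint $1$. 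Every element $B \in E_1^*TE_1^*$ preserves each $E_1^*W_j$ and acts there as a scalar; by $T$-linearity this scalar depends only on the isomorphism class of $W_j$. Let $c$ denote the number of isomorphism classes of irreducible $T$-modules $W$ with $E_1^*W \ne 0$. The evaluation map $\varepsilon : E_1^*TE_1^* \to \mathbb{F}^c$ sending $B$ to the $c$-tuple of these scalars is a well-defined unital $\mathbb{F}$-algebra homomorphism, and it is injective because $B = E_1^*BE_1^*$ is determined by its restriction to $E_1^*V$.

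\smallskip
The key step is to compute the scalar action of each generator on each class and verify that the resulting pairs of values separate classes. For the primary module, $E_1^*(M\hat{x})$ is spanned by $v := \sum_{y \in \Gamma(x)}\hat{y}$; since $\mathbb{J}v = k\mathds{1}$, one has $E_1^*\mathbb{J}E_1^*v = kv$, so the scalar of $E_1^*\mathbb{J}E_1^*$ on the primary class is the valency $k \ne 0$. For a non-primary irreducible $T$-module $W$ of endpoint $1$ and any $w \in E_1^*W$, the vector $\mathds{1}$ lies in the primary $T$-module, so $\langle \mathds{1}, w\rangle = 0$ by the pairwise orthogonality of distinct irreducible $T$-modules; hence $\mathbb{J}w = (\sum_y w_y)\mathds{1} = 0$ and $E_1^*\mathbb{J}E_1^*$ acts on $E_1^*W$ as $0$. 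Finally, by Lemma \ref{lem:W iff mu}, distinct non-primary classes of endpoint $1$ correspond to distinct eigenvalues of $E_1^*AE_1^*$. Combining these facts, the $c$ pairs of scalars produced by $(E_1^*\mathbb{J}E_1^*, E_1^*AE_1^*)$ are pairwise distinct points of $\mathbb{F}^2$.

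\smallskip
By bivariate Lagrange interpolation at $c$ distinct points of $\mathbb{F}^2$, one obtains polynomials $p_1, \ldots, p_c \in \mathbb{F}[x,y]$ with $p_i$ taking the value $\delta_{i,j}$ at the $j$-th point. Substituting the two commuting generators produces elements $p_i(E_1^*\mathbb{J}E_1^*, E_1^*AE_1^*) \in \mathcal{A}$ whose images under $\varepsilon$ are the standard basis vectors of $\mathbb{F}^c$. Hence $\varepsilon(\mathcal{A}) = \mathbb{F}^c$, and since $\mathcal{A} \subseteq E_1^*TE_1^*$ and $\varepsilon$ is injective on $E_1^*TE_1^*$, it follows that $\mathcal{A} = E_1^*TE_1^*$.

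\smallskip
The main obstacle is the second step above: verifying that $E_1^*\mathbb{J}E_1^*$ distinguishes the primary class from non-primary classes of endpoint $1$. This relies on $\mathds{1}$ lying in the primary $T$-module together with the pairwise orthogonality of irreducible $T$-modules. Separation of non-primary classes from one another is already furnished by Lemma \ref{lem:W iff mu}, and the final interpolation step is routine once the pairs of scalar values are known to be distinct.
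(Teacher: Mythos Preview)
Your proof is correct and follows the same overall strategy as the paper: decompose $E_1^*V$ into one-dimensional $E_1^*TE_1^*$-submodules indexed by isomorphism classes, identify $E_1^*TE_1^*$ with $\mathbb{F}^c$ via its scalar action on these classes, then show that the two generators $E_1^*\mathbb{J}E_1^*$ and $E_1^*AE_1^*$ already separate all classes (primary versus non-primary by the first generator, eigenvalue $k$ versus $0$; non-primary classes from one another by the second generator, via Lemma~\ref{lem:W iff mu}). The one technical difference is in the closing step. The paper combines the two generators into the single element $K = E_1^*\mathbb{J}E_1^* + E_1^*AE_1^*$ and argues that $K$ has $\ell+1$ distinct eigenvalues on $E_1^*V$---this requires the extra observation that each local eigenvalue $\mu_i$ satisfies $\mu_i < k+a_1$, so that the primary eigenvalue $k+a_1$ does not collide with any non-primary one---and then concludes that the powers of $K$ span $E_1^*TE_1^*$. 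You instead keep the two generators separate and invoke bivariate interpolation at the $c$ distinct points of $\mathbb{F}^2$, which avoids needing any eigenvalue bound; this is a slightly cleaner finish, at the cost of appealing to multivariate interpolation rather than a univariate minimal-polynomial argument.
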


\begin{proof}
Let $\ell$ denote the number of mutually non-isomorphic irreducible $T$-modules with endpoint $1$.
Let $W_1, W_2, \ldots, W_\ell$ denote mutually non-isomorphic irreducible $T$-modules with endpoint $1$.
For $1 \leq i \leq \ell$, let $\mult_i$ denote the multiplicity of $W_i$ in $V$.
Then we have a direct sum
\begin{equation}\label{eq:ds V:W_i,j}
	\mathbf{W} = \sum_{i=1}^\ell \sum_{j=1}^{\mult_i} W_i^{(j)},
\end{equation}
where each $W_i^{(j)}$ is an irreducible $T$-module isomorphic to $W_i$.
Let $W_0$ denote the primary $T$-module.
Note that the multiplicity of $W_0$ in $V$ is $1$.
Consider the direct sum $W_0 + \mathbf{W}$. 
Applying $E_1^*$ to this sum, we get
\begin{align}
	E_1^*V 
	& = E_1^*W_0 + E_1^*\mathbf{W} \label{eq:ds E1*V(1)} \\
	& = E_1^*W_0 + \sum_{i=1}^\ell \sum_{j=1}^{\mult_i} E_1^*W_i^{(j)} \qquad \text{(direct sum)}. \label{eq:ds E1*V(2)}
\end{align}
By construction, $E_1^* T E_1^*$ acts faithfully on $E_1^*V$.
Moreover, each summand in \eqref{eq:ds E1*V(2)} is an $E_1^* T E_1^*$-submodule and has dimension one by \eqref{eq:Ei*W=1} and Lemma \ref{lem:dimE1W=1}(i).
By Wedderburn's theorem \cite{CR1962} we have an $\mathbb{F}$-algebra isomorphism
\begin{equation}\label{eq:E1*TE1* iso F...F}
	E_1^*TE_1^* \cong \mathbb{F} \oplus \mathbb{F} \oplus \cdots \oplus \mathbb{F} \qquad (\text{$\ell+1$ copies}).
\end{equation}
In particular, $\dim(E_1^* T E_1^*) = \ell+1$.

\smallskip
Next, pick $0 \neq w_0 \in E_1^*W_0$, and for $1 \leq i \leq \ell$ and $1 \leq j \leq \mult_i$, pick $0 \neq w_i^{(j)} \in E_1^* W_i^{(j)}$ 
Observe that 
\begin{equation}\label{eq: basis E1*V wi,j}
	\{ w_0 \} \cup \{ w_i^{(j)} \mid 1 \leq i \leq \ell, \quad 1 \leq j \leq \mult_i \}
\end{equation}
forms a basis for $E_1^*V$.
Define the matrix $K \in \Mat_X(\mathbb{F})$ by
\begin{equation}\label{def:mat K}
	K = E_1^*\mathbb{J}E_1^* + E_1^*AE_1^*.
\end{equation}
Observe that $K  \in E_1^* M E_1^*$.
We describe the action of $K$ on $E_1^*V$.
Recall the direct sum \eqref{eq:ds E1*V(1)} of $E_1^*V$.
First, consider the action of $K$ on $E_1^*W_0$. 
Note that $w_0$ is a scalar multiple of $E_1^*\mathds{1}$.
By construction, we have $K w_0 = (k + a_1)w_0$, where $k$ is the valency of $\Gamma$ and $a_1$ is the intersection number of $\Gamma$.
We denote $\mu_0 := k + a_1$.
Next, consider the action of $K$ on $E_1^* \mathbf{W}$.
Note that $E_1^* \mathbf{W}$ is the orthogonal complement of $E_1^*W_0$ in $E_1^*V$.
Since $E_1^*\mathbb{J}E_1^*$ acts as zero on $E_1^* \mathbf{W}$, the action of $K$ on $E_1^* \mathbf{W}$ coincides with the action of $E_1^*AE_1^*$ on $E_1^* \mathbf{W}$.
By Corollary \ref{cor:W iff mu}, the matrix $E_1^*AE_1^*$ has $\ell$ mutually distinct eigenvalues $\{\mu_i\}_{i=1}^\ell$ on $E_1^* \mathbf{W}$.
For $1 \leq i \leq \ell$, each scalar $\mu_i$ is an eigenvalue of the local graph $\Gamma(x)$ and thus $\mu_i < \mu_0$.
By these comments, the matrix $K$ has $\ell+1$ mutually distinct eigenvalues $\{\mu_i\}_{i=0}^\ell$ on $E_1^*V$.
Since $K$ is real and symmetric, its action on $E_1^*V$ has minimal polynomial of degree $\ell+1$.
Hence, the matrices
\begin{equation}\label{eq:mat K^i}
	E_1^*, \ K, \ K^2, \ \ldots \ , \  K^\ell
\end{equation}
are linearly independent.  
By this and since $\dim(E_1^* T E_1^*)=\ell + 1$, the matrices \eqref{eq:mat K^i} form a basis for  $E_1^*TE_1^*$.
Therefore, $K$ generates $E_1^* T E_1^*$.
By this and since $K \in E_1^* M E_1^*$, the result follows.
\end{proof}

\begin{proposition}\label{lem:E*1TE*1}
Every element of $E_1^*TE_1^*$ is symmetric.
\end{proposition}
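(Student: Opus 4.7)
The plan is to combine three facts already established in this section: (i) by Proposition~\ref{lem:E1*TE1* comm}, the algebra $E_1^*TE_1^*$ is commutative; (ii) by Lemma~\ref{lem:E1*TE1*=<E1*ME1*>}, the algebra $E_1^*TE_1^*$ is generated by $E_1^*ME_1^*$; and (iii) by Lemma~\ref{lem: E*AE*sym}(i), every element of $E_1^*ME_1^*$ is symmetric. The symmetry of an arbitrary element of $E_1^*TE_1^*$ then follows from the elementary fact that if two symmetric matrices commute, their product is symmetric, since $(BC)^\top = C^\top B^\top = CB = BC$.

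In more detail, I would argue as follows. Every element $N \in E_1^*TE_1^*$ can be written as an $\mathbb{F}$-linear combination of products of elements from $E_1^*ME_1^*$. Each such product is a product of symmetric matrices, by (iii), and these matrices commute, by (i); hence each such product is symmetric. Linear combinations of symmetric matrices are symmetric, so $N$ is symmetric.

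Alternatively, and perhaps most cleanly, one can read the result straight off the constructive part of the proof of Lemma~\ref{lem:E1*TE1*=<E1*ME1*>}: the matrix
\[
K \;=\; E_1^*\mathbb{J}E_1^* + E_1^*AE_1^*
\]
generates $E_1^*TE_1^*$, and the matrices $E_1^*, K, K^2, \ldots, K^\ell$ form a basis for $E_1^*TE_1^*$. Since $E_1^*$, $\mathbb{J}$, and $A$ are symmetric, so is $K$, and therefore so is every power $K^i$. Thus every element of $E_1^*TE_1^*$ is a linear combination of symmetric matrices and hence symmetric.

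There is no real obstacle here: the heavy lifting has already been done in Proposition~\ref{lem:E1*TE1* comm} and Lemma~\ref{lem:E1*TE1*=<E1*ME1*>}, where sharpness (Theorem~\ref{mainthm}) was invoked to force $\dim(E_1^*W)\leq 1$ for every irreducible $T$-module $W$, which in turn made the local algebra commutative and one-generated by a symmetric matrix. Given those inputs, the present proposition reduces to the routine observation that symmetry is preserved under sums and products of commuting symmetric matrices.
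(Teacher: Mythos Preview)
Your proof is correct and follows essentially the same approach as the paper: both combine Lemma~\ref{lem: E*AE*sym}(i), Proposition~\ref{lem:E1*TE1* comm}, and Lemma~\ref{lem:E1*TE1*=<E1*ME1*>} to conclude that $E_1^*TE_1^*$ is a commutative algebra generated by symmetric matrices, hence consists of symmetric matrices. Your additional remark via the explicit generator $K$ is a nice alternative but not needed.
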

\begin{proof}
By Lemma~\ref{lem: E*AE*sym}(i), every element of $E_1^* M E_1^*$ is symmetric.
By Proposition~\ref{lem:E1*TE1* comm} and Lemma~\ref{lem:E1*TE1*=<E1*ME1*>}, the algebra $E_1^* T E_1^*$ is commutative and generated by $E_1^* M E_1^*$.
The result follows.
\end{proof}

We have described the algebra $E_1^*T E_1^*$. 
We now discuss its structure in more detail.

\begin{lemma}[Reduction rules, {\cite[Lemmas 6.3, 6.5]{Ter2024}}]\label{lem:reduction} 
The following {\rm(i)}--{\rm(iii)} hold.
\begin{itemize}
\setlength{\itemsep}{0pt}
	\item[\rm(i)] $E_0 E_1^* E_0 = |X|^{-1} k E_0$, where $k$ is the valency of $\Gamma$.
	\item[\rm(ii)] $E_0 E_1^* A = \sum_{h=0}^D p^h_{1,1} E_0E_h^*$, where $p^h_{ij}$ are the intersection numbers of $\Gamma$.
	\item[\rm(iii)] $A E_1^* E_0 = \sum_{h=0}^D p^h_{1,1} E_h^* E_0$.
\end{itemize}
\end{lemma}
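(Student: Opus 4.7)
The three identities are all direct consequences of the fact that $E_0 = |X|^{-1}\mathbb{J}$ from \eqref{E_0=|X|^-1J}, together with the entrywise definitions of $E_1^*$ and $A$. I would handle them in the order (i), (ii), (iii), and reduce (iii) to (ii) at the end by transposition. There is no conceptual obstacle; the only care needed is in tracking the entries and isolating the dependence on $\partial(x,z)$.

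For (i), I would substitute $E_0 = |X|^{-1}\mathbb{J}$ on both sides, so the identity reduces to showing $\mathbb{J} E_1^* \mathbb{J} = k \mathbb{J}$. For all $y,z \in X$, the $(y,z)$-entry of $\mathbb{J} E_1^* \mathbb{J}$ equals $\sum_{w \in X}(E_1^*)_{w,w}$, which by \eqref{def:Ei* matrix} counts the vertices $w$ at distance $1$ from $x$; this is the valency $k$ of $\Gamma$. Dividing by $|X|^{2}$ and using $E_0 = |X|^{-1}\mathbb{J}$ gives $E_0 E_1^* E_0 = |X|^{-1}k E_0$.

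For (ii), I would again replace $E_0$ by $|X|^{-1}\mathbb{J}$ and evaluate $(\mathbb{J} E_1^* A)_{y,z}$. Using that $(E_1^*)_{w,w} = 1$ precisely when $\partial(x,w) = 1$ and $A_{w,z} = 1$ precisely when $\partial(w,z) = 1$, this entry equals $|\Gamma(x) \cap \Gamma(z)|$, which by \eqref{eq:int num} is exactly $p^{\partial(x,z)}_{1,1}$ and depends only on $\partial(x,z)$. On the other hand, $(E_0 E_h^*)_{y,z} = |X|^{-1}(E_h^*)_{z,z}$ equals $|X|^{-1}$ if $\partial(x,z)=h$ and $0$ otherwise, so $\sum_{h=0}^{D} p^h_{1,1}(E_0 E_h^*)_{y,z} = |X|^{-1}p^{\partial(x,z)}_{1,1}$. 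Matching entries yields (ii).

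For (iii), I would simply transpose both sides of (ii). By \eqref{Ei transpose}, \eqref{all-one mat}, and \eqref{eq:dprim (1)}, each of $E_0$, $E_1^*$, $A$, and $E_h^*$ is symmetric, so transposition reverses the order of each product while leaving the scalars $p^h_{1,1}$ unchanged, producing (iii). The whole argument is entrywise bookkeeping; the only ``main step'' is the reinterpretation of $(\mathbb{J} E_1^* A)_{y,z}$ as an intersection number in (ii), and this is immediate from the definitions.
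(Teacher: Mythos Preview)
Your proof is correct. The paper does not actually argue the identities; it simply invokes \cite[Lemmas~6.3,~6.5]{Ter2024} with $i=j=1$, so your entrywise verification is exactly the kind of computation that reference encapsulates. Your argument is self-contained and uses nothing beyond $E_0=|X|^{-1}\mathbb{J}$, the diagonal form of $E_h^*$, and the definition of the intersection numbers, which is all that is needed here.
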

\begin{proof}
Set $i=1$ and $j=1$ in \cite[Lemma 6.3(ii),(iv), Lemma 6.5(iv)]{Ter2024}.
\end{proof}

\begin{lemma}\label{lem:E*1TE*1gen}
The following {\rm(i)}--{\rm(iv)} hold.
\begin{itemize}
\setlength{\itemsep}{0pt}
	\item[\rm(i)] The algebra $E_1^*TE_1^*$ is generated by $E_1^* \mathbb{J} E_1^*$, $E_1^*AE_1^*$.
	\item[\rm(ii)] $(E_1^* \mathbb{J} E_1^*)^2 = k E_1^* \mathbb{J} E_1^*$.
	\item[\rm(iii)] $(E_1^* \mathbb{J} E_1^*)(E_1^* A E_1^*) = a_1E_1^* \mathbb{J} E_1^* = (E_1^* A E_1^*)(E_1^* \mathbb{J} E_1^*)$, where $a_1$ is from \eqref{eq:int num abc}.
	\item[\rm(iv)] $E_1^* \mathbb{J} E_1^*$ is a basis for a (two-sided) ideal of $E_1^* T E_1^*$.
\end{itemize}
\end{lemma}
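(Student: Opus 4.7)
The plan is to handle the four parts in order, reducing (i) to the generation result already proved in Lemma \ref{lem:E1*TE1*=<E1*ME1*>}, deriving the quadratic relations (ii), (iii) from the reduction rules in Lemma \ref{lem:reduction} together with the identity $E_0=|X|^{-1}\mathbb{J}$, and deducing (iv) formally from (ii), (iii) and the commutativity of $E_1^*TE_1^*$ established in Proposition \ref{lem:E1*TE1* comm}.

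For (i), I combine Lemma \ref{lem:E1*TE1*=<E1*ME1*>} (the algebra $E_1^*TE_1^*$ is generated by $E_1^*ME_1^*$) with Lemma \ref{lem:E1*ME1*,E1TE1}(i) (the subspace $E_1^*ME_1^*$ is spanned by $E_1^*$, $E_1^*\mathbb{J}E_1^*$, $E_1^*AE_1^*$). Since $E_1^*$ is the multiplicative identity of $E_1^*TE_1^*$, the two matrices $E_1^*\mathbb{J}E_1^*$ and $E_1^*AE_1^*$ generate the whole algebra.

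For (ii) and (iii), I substitute $\mathbb{J}=|X|E_0$ (from \eqref{E_0=|X|^-1J}) and apply the reduction rules. For (ii):
\[
(E_1^*\mathbb{J}E_1^*)^2 = E_1^*\mathbb{J}E_1^*\mathbb{J}E_1^* = |X|^2\,E_1^*(E_0E_1^*E_0)E_1^* = |X|k\,E_1^*E_0E_1^* = k\,E_1^*\mathbb{J}E_1^*,
\]
where the third equality uses Lemma \ref{lem:reduction}(i). For (iii):
\[
(E_1^*\mathbb{J}E_1^*)(E_1^*AE_1^*) = |X|\,E_1^*E_0E_1^*AE_1^* = |X|\sum_{h=0}^D p^h_{1,1}\,E_1^*E_0E_h^*E_1^*,
\]
using Lemma \ref{lem:reduction}(ii); by the orthogonality $E_h^*E_1^*=\delta_{h,1}E_1^*$ only the $h=1$ term survives, leaving $p^1_{1,1}\,E_1^*\mathbb{J}E_1^*=a_1\,E_1^*\mathbb{J}E_1^*$. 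The right-hand equality in (iii) is then immediate from commutativity (Proposition \ref{lem:E1*TE1* comm}); alternatively it admits a mirror computation via Lemma \ref{lem:reduction}(iii).

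For (iv), commutativity promotes every left ideal to a two-sided one, so it suffices to check that the one-dimensional subspace $\mathbb{F}\,E_1^*\mathbb{J}E_1^*$ is closed under multiplication by the two generators supplied by (i). But (ii) gives $E_1^*\mathbb{J}E_1^*\cdot E_1^*\mathbb{J}E_1^*=k\,E_1^*\mathbb{J}E_1^*$ and (iii) gives $E_1^*AE_1^*\cdot E_1^*\mathbb{J}E_1^*=a_1\,E_1^*\mathbb{J}E_1^*$, so closure is automatic. There is no genuine obstacle in the argument; the only bookkeeping to watch is the scaling between $\mathbb{J}$ and $E_0$, and the fact that the unit of $E_1^*TE_1^*$ is $E_1^*$ rather than $I$, so the generation statement in (i) is to be read relative to that identity.
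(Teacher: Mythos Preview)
Your proof is correct and follows essentially the same approach as the paper: (i) is obtained by combining Lemmas~\ref{lem:E1*ME1*,E1TE1}(i) and~\ref{lem:E1*TE1*=<E1*ME1*>}, (ii) and (iii) are computed from $\mathbb{J}=|X|E_0$ and the reduction rules in Lemma~\ref{lem:reduction}, and (iv) follows immediately from (i)--(iii). The only cosmetic difference is that for the right-hand equality in (iii) the paper appeals directly to Lemma~\ref{lem:reduction}(iii), whereas you note (equivalently) that it also follows from the commutativity in Proposition~\ref{lem:E1*TE1* comm}.
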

\begin{proof}
(i): By Lemmas \ref{lem:E1*ME1*,E1TE1}(i) and \ref{lem:E1*TE1*=<E1*ME1*>}.

\smallskip
\noindent
(ii): Recall $\mathbb{J} = |X|E_0$. 
Using this and Lemma \ref{lem:reduction}(i), we have
$$
	(E_1^* \mathbb{J} E_1^*)^2
	= |X|^2 E_1^* (E_0 E_1^* E_0) E_1^* 
	= k |X| E_1^* E_0 E_1^*
	= k E_1^* \mathbb{J} E_1^*.
$$

\noindent
(iii): We have
\begin{align*}
	(E_1^* \mathbb{J} E_1^*)(E_1^* A E_1^*) 
	& = |X| E_1^* (E_0 E_1^* A) E_1^* \\
	& = |X| E_1^* \left( \sum_{h=0}^D p^h_{1,1} E_0E_h^* \right) E_1^*  && \text{(by Lemma \ref{lem:reduction}(ii))}\\
	& = |X| E_1^* \left( p^0_{1,1} E_0E_0^* + p^1_{1,1} E_0E_1^* + p^2_{1,1} E_0E_2^* \right) E_1^* \\
	& = a_1 E_1^* \mathbb{J} E_1^* && \text{($a_1=p^1_{1,1}$ from \eqref{eq:int num abc})}.
\end{align*}
Similarly, using Lemma \ref{lem:reduction}(iii) we obtain $(E_1^* A E_1^*)(E_1^* \mathbb{J} E_1^*) = a_1 E_1^* \mathbb{J} E_1^*$. 

\smallskip
\noindent
(iv): Immediate from (i)--(iii).
\end{proof}

We now discuss the algebra $E_1TE_1$.

\begin{lemma}\label{lem:E1TE1=<E1M*E1>}
The algebra $E_1 T E_1$ is generated by $E_1 M^* E_1$.
\end{lemma}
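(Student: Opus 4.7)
The strategy is to dualize the proof of Lemma~\ref{lem:E1*TE1*=<E1*ME1*>} under the symmetry swapping $E_i^* \leftrightarrow E_i$, $A \leftrightarrow A^*$, $M \leftrightarrow M^*$, and $\mathbb{J} = |X|E_0 \leftrightarrow |X|E_0^*$; this symmetry interchanges endpoint with dual endpoint. First I would establish the dual analogs of Lemma~\ref{lem:W iff mu} and Corollary~\ref{cor:W iff mu}: two irreducible $T$-modules with dual endpoint $1$ are isomorphic if and only if $E_1 A^* E_1$ takes the same eigenvalue on their (one-dimensional) $E_1$-components, and consequently the number $\ell'$ of isomorphism classes of such modules equals the number of distinct eigenvalues of $E_1 A^* E_1$ on $E_1 \mathbf{W}'$, where $\mathbf{W}'$ denotes the span of all such modules. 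The arguments go through verbatim from the primal proofs, using Lemma~\ref{NT:Thm2.6 dual}(i) and Lemma~\ref{lem:E1*ME1*,E1TE1}(ii) in place of their primal counterparts; in particular $E_1 M^* E_1$ is spanned by $E_1,\; E_1 E_0^* E_1,\; E_1 A^* E_1$, and the role of $E_1^* \mathbb{J} E_1^*$ (annihilating non-primary modules) is now played by $|X|E_1 E_0^* E_1$, since non-primary $T$-modules are orthogonal to $\hat{x} \in E_0^* V$.

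Next I would apply Wedderburn. By Lemma~\ref{lem:dimE1W=1}(iii),(iv) together with Lemma~\ref{lem:equi pri Tmod}, each irreducible $T$-module contributes a one-dimensional summand to $E_1 V$ when it is primary or has dual endpoint $1$, and contributes $0$ otherwise. Since $E_1 T E_1$ is commutative by Proposition~\ref{lem:E1*TE1* comm} and acts faithfully on $E_1 V$, Wedderburn's theorem yields $\dim(E_1 T E_1) = \ell' + 1$, together with a decomposition of $E_1 V$ into $\ell' + 1$ one-dimensional common eigenspaces.

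Finally, I would compute the eigenvalues on these common eigenspaces of the two natural elements of $E_1 M^* E_1$. Since $E_0^* = \hat{x}\hat{x}^{\top}$, we have $|X|E_1 E_0^* E_1 = |X|(E_1 \hat{x})(E_1 \hat{x})^{\top}$, which acts as the scalar $\theta_0^* = |X|(E_1)_{x,x} > 0$ on $E_1 W_0 = \operatorname{Span}\{E_1\hat{x}\}$ and as $0$ on $E_1 \mathbf{W}'$. Meanwhile $E_1 A^* E_1$ acts as $a_1^*$ on $E_1 W_0$ and as the $\ell'$ distinct eigenvalues $\{\mu_i\}_{i=1}^{\ell'}$ on $E_1 \mathbf{W}'$. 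Now consider $K_\alpha = \alpha\,|X|E_1 E_0^* E_1 + E_1 A^* E_1 \in E_1 M^* E_1$: its eigenvalue is $\alpha\,\theta_0^* + a_1^*$ on $E_1 W_0$ and $\mu_i$ on the $i$th non-primary eigenspace. Since $\theta_0^* \neq 0$, at most $\ell'$ values of $\alpha$ are ``bad'' (namely $\alpha = (\mu_i - a_1^*)/\theta_0^*$), so any other $\alpha \in \mathbb{F}$ makes $K_\alpha$ have $\ell' + 1$ distinct eigenvalues on $E_1 V$. The subalgebra generated by such a $K_\alpha$ then has dimension $\ell' + 1 = \dim(E_1 T E_1)$, forcing $E_1 M^* E_1$ to generate $E_1 T E_1$. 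The main subtlety is the two-pronged eigenvalue separation that makes this work: one needs $|X|E_1 E_0^* E_1$ to distinguish primary from non-primary cleanly (which relies on the rank-one identity $E_0^* = \hat{x}\hat{x}^\top$ and the orthogonality of irreducible $T$-modules), while $E_1 A^* E_1$ must distinguish among the non-primary isomorphism classes (which is exactly the content of the dual of Corollary~\ref{cor:W iff mu}).
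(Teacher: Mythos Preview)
Your proposal is correct and follows the same dualization strategy the paper intends (its proof reads simply ``Similar to Lemma~\ref{lem:E1*TE1*=<E1*ME1*>}''). Your parameterized $K_\alpha$ is a harmless refinement that sidesteps the need for a dual analog of the local-graph eigenvalue bound $\mu_i < \mu_0$ used in the primal argument.
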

\begin{proof}
Similar to Lemma \ref{lem:E1*TE1*=<E1*ME1*>}.
\end{proof}

\begin{proposition}\label{lem:E1TE1}
Every element of $E_1TE_1$ is symmetric.
\end{proposition}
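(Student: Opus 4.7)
The plan is to mirror the argument used for Proposition \ref{lem:E*1TE*1}, replacing every ingredient by its dual counterpart. The three facts we need are already in place: by Lemma \ref{lem: E*AE*sym}(ii) every element of $E_1 M^* E_1$ is symmetric; by Proposition \ref{lem:E1*TE1* comm} the algebra $E_1 T E_1$ is commutative; and by Lemma \ref{lem:E1TE1=<E1M*E1>} the algebra $E_1 T E_1$ is generated by $E_1 M^* E_1$.

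First I would note the elementary fact that if $B_1, B_2$ are symmetric matrices that commute, then their product is symmetric, because $(B_1 B_2)^\top = B_2^\top B_1^\top = B_2 B_1 = B_1 B_2$. Consequently, any polynomial in a commuting family of symmetric matrices, with scalar coefficients in $\mathbb{F}$, is again symmetric (symmetry is preserved under scalar multiplication and addition).

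Now pick any $B \in E_1 T E_1$. By Lemma \ref{lem:E1TE1=<E1M*E1>}, $B$ lies in the algebra generated by $E_1 M^* E_1$, so $B$ can be written as an $\mathbb{F}$-polynomial in finitely many elements $C_1, \ldots, C_t$ of $E_1 M^* E_1$. Each $C_i$ is symmetric by Lemma \ref{lem: E*AE*sym}(ii), and the $C_i$ commute pairwise by Proposition \ref{lem:E1*TE1* comm}. Applying the observation of the preceding paragraph, $B$ is symmetric. This completes the proof.

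There is no real obstacle here; the whole argument is a dual transcription of the proof of Proposition \ref{lem:E*1TE*1}, and all of the supporting lemmas have already been established in the paper.
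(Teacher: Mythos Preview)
Your proof is correct and follows essentially the same approach as the paper, which simply writes ``Similar to Proposition~\ref{lem:E*1TE*1}'' and relies on the same three ingredients: Lemma~\ref{lem: E*AE*sym}(ii), Proposition~\ref{lem:E1*TE1* comm}, and Lemma~\ref{lem:E1TE1=<E1M*E1>}. You have merely spelled out in detail the elementary fact that a polynomial in commuting symmetric matrices is symmetric, which the paper leaves implicit.
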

\begin{proof}
Similar to Proposition \ref{lem:E*1TE*1}.
\end{proof}

\begin{lemma}[Reduction rules, {\cite[Lemmas 6.4, 6.6]{Ter2024}}]\label{lem:reduction(dual)} 
The following {\rm(i)}--{\rm(iii)} hold.
\begin{itemize}
\setlength{\itemsep}{0pt}
	\item[\rm(i)] $E_0^* E_1 E_0^* = |X|^{-1} m_1 E_0^*$, where $m_1$ is the trace of $E_1$.
	\item[\rm(ii)] $E_0^* E_1 A^* = \sum_{h=0}^D q^h_{1,1} E_0^* E_h$, where $q^h_{i,j}$ are the Krein parameters of $\Gamma$.
	\item[\rm(iii)] $A^* E_1 E_0^* = \sum_{h=0}^D q^h_{1,1} E_h E_0^*$.
\end{itemize}
\end{lemma}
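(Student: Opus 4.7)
The plan is to prove each of the three identities by direct entrywise calculation, exploiting the fact that $E_0^*$ is the rank-one matrix $\hat{x}\hat{x}^{\top}$ and that both $E_0^*$ and $A^*$ are diagonal. Part (iii) will then follow from part (ii) by taking transposes.

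For part (i), I would first observe that from the definition of $E_0^*$ in \eqref{def:Ei* matrix} we have $(E_0^*)_{z,w}=\delta_{z,x}\delta_{w,x}$, so $E_0^*BE_0^*=B_{x,x}\,E_0^*$ for every $B\in\Mat_X(\mathbb{F})$. Applying this with $B=E_1$ gives $E_0^*E_1E_0^*=(E_1)_{x,x}\,E_0^*$. Since $E_1\in M$ is a linear combination of the distance matrices $\{A_j\}_{j=0}^{D}$, and since $A_0=I$ while $A_j$ has zero diagonal for $1\le j\le D$, the diagonal of $E_1$ is constant. Consequently $m_1=\operatorname{tr}(E_1)=|X|\,(E_1)_{x,x}$, which yields $(E_1)_{x,x}=|X|^{-1}m_1$ and establishes (i).

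For part (ii), I would compute the $(z,y)$-entry of both sides. Since $E_0^*$ and $A^*$ are diagonal,
\begin{equation*}
(E_0^*E_1A^*)_{z,y}=(E_0^*)_{z,z}(E_1)_{z,y}(A^*)_{y,y}=\delta_{z,x}(E_1)_{x,y}\cdot|X|(E_1)_{x,y}=\delta_{z,x}\,|X|\,(E_1)_{x,y}^{2},
\end{equation*}
using the definition of $A^*=A_1^*$ in \eqref{def:Ai* matrix}. On the right-hand side,
\begin{equation*}
\Bigl(\sum_{h=0}^{D} q^h_{1,1} E_0^* E_h\Bigr)_{z,y}=\delta_{z,x}\sum_{h=0}^{D} q^h_{1,1}(E_h)_{x,y}.
\end{equation*}
Matching these two expressions reduces to the identity $|X|(E_1)_{x,y}^{2}=\sum_{h=0}^{D}q^h_{1,1}(E_h)_{x,y}$, which is precisely the $(x,y)$-entry of $E_1\circ E_1=|X|^{-1}\sum_{h} q^h_{1,1}E_h$, the defining relation for the Krein parameters in \eqref{def:Krein para} with $i=j=1$.

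Finally, for part (iii), I would take the transpose of the identity in (ii). By \eqref{Ei transpose}, \eqref{eq:dprim (1)}, and \eqref{eq: dual mats}, the matrices $E_0^*$, $E_1$, $E_h$, and $A^*$ are all symmetric, so transposing both sides of (ii) gives exactly $A^*E_1E_0^*=\sum_{h=0}^{D}q^h_{1,1}E_hE_0^*$. There is no real obstacle here: the entire proof is a short bookkeeping exercise, and the only nontrivial input is the Krein-parameter identity from \eqref{def:Krein para}.
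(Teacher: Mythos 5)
Your proof is correct, and it differs from the paper's in an interesting way: the paper discharges all three identities by a one-line citation to \cite[Lemmas~6.4, 6.6]{Ter2024} with $i=j=1$, whereas you give a self-contained entrywise computation from first principles. Your argument leverages three elementary facts: that $E_0^*=\hat{x}\hat{x}^\top$ so $E_0^* B E_0^* = B_{x,x}E_0^*$ for any $B$ (giving (i) after noting the diagonal of $E_1\in M$ is constant, hence equal to $|X|^{-1}\operatorname{tr}(E_1)=|X|^{-1}m_1$); that the $(z,y)$-entry of $E_0^*E_1A^*$ collapses to $\delta_{z,x}\,|X|(E_1)_{x,y}^2$ because both $E_0^*$ and $A^*$ are diagonal, so that (ii) is exactly the $(x,y)$-entry of the Krein identity $E_1\circ E_1=|X|^{-1}\sum_h q^h_{1,1}E_h$ from \eqref{def:Krein para}; and that (iii) is the transpose of (ii) since $E_0^*$, $E_1$, $E_h$, $A^*$ are all symmetric by \eqref{Ei transpose}, \eqref{eq:dprim (1)}, \eqref{eq: dual mats}. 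The upshot: the paper's route is shorter and defers to the systematic treatment in \cite{Ter2024}, which is appropriate since that reference develops the full family of reduction rules $E_0^*E_iE_0^*$, $E_0^*E_iA_j^*$, etc.; your route has the advantage of being entirely self-contained within the notation of the present paper and makes transparent exactly which structural inputs (rank-one $E_0^*$, diagonality of $M^*$, the Krein identity, symmetry) are being used, at the cost of not generalizing as cleanly to the other reduction rules in one pass.
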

\begin{proof}
Set $i=1$ and $j=1$ in \cite[Lemma 6.4(ii),(iv), Lemma 6.6(iv)]{Ter2024}.
\end{proof}

\begin{lemma}\label{lem:E1TE1gen}
The following {\rm(i)}--{\rm(iv)} hold.
\begin{itemize}
\setlength{\itemsep}{0pt}
	\item[\rm(i)] The algebra $E_1TE_1$ is generated by $E_1 E_0^* E_1$, $E_1A^*E_1$.
	\item[\rm(ii)] $(E_1 E_0^* E_1)^2 = m_1 |X|^{-1} E_1 E_0^* E_1$, where $m_1$ is the trace of $E_1$.
	\item[\rm(iii)] $(E_1 E_0^* E_1)(E_1 A^* E_1) = a_1^*E_1 E_0^* E_1 = (E_1 A^* E_1)(E_1 E_0^* E_1)$, where $a_1^*$ is from \eqref{eq:dual int num abc}.
	\item[\rm(iv)] $E_1 E_0^* E_1$ is a basis for a (two-sided) ideal of $E_1 T E_1$
\end{itemize}

\end{lemma}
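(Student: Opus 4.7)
The plan is to mirror the proof of Lemma~\ref{lem:E*1TE*1gen} almost verbatim, applying the formal duality that interchanges $M \leftrightarrow M^*$, $E_1^* \leftrightarrow E_1$, the all-ones matrix $\mathbb{J} = |X|E_0$ with $|X|E_0^*$, and the intersection numbers $p^h_{1,1}$ with the Krein parameters $q^h_{1,1}$. For part (i), Lemma~\ref{lem:E1*ME1*,E1TE1}(ii) shows that $E_1 M^* E_1$ is spanned by $E_1$, $E_1 E_0^* E_1$, and $E_1 A^* E_1$, while Lemma~\ref{lem:E1TE1=<E1M*E1>} asserts that $E_1 T E_1$ is generated by $E_1 M^* E_1$; combining these yields (i). For part (ii), I would apply Lemma~\ref{lem:reduction(dual)}(i) to collapse the middle factor:
\[
(E_1 E_0^* E_1)^2 = E_1 \bigl(E_0^* E_1 E_0^*\bigr) E_1 = |X|^{-1} m_1 \, E_1 E_0^* E_1.
\]

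For part (iii), the plan is to write $(E_1 E_0^* E_1)(E_1 A^* E_1) = E_1(E_0^* E_1 A^*)E_1$ (using $E_1^2 = E_1$) and then apply Lemma~\ref{lem:reduction(dual)}(ii) to expand
\[
E_1(E_0^* E_1 A^*)E_1 = \sum_{h=0}^{D} q^h_{1,1}\, E_1 E_0^* E_h E_1.
\]
The key observation is that the orthogonality $E_j E_h = \delta_{j,h} E_j$ annihilates every term but the middle one: the $h=0$ summand vanishes because $E_0 E_1 = 0$, the $h=2$ summand vanishes because $E_2 E_1 = 0$, and only $q^1_{1,1} E_1 E_0^* E_1 = a_1^* E_1 E_0^* E_1$ survives. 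The reverse identity $(E_1 A^* E_1)(E_1 E_0^* E_1) = a_1^* E_1 E_0^* E_1$ is obtained by the same argument using Lemma~\ref{lem:reduction(dual)}(iii) in place of (ii), and is in any case a direct consequence of Proposition~\ref{lem:E1*TE1* comm} once one direction is in hand.

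Finally, for part (iv), I would argue that by (i) every element of $E_1 T E_1$ is a polynomial in $E_1 E_0^* E_1$ and $E_1 A^* E_1$ (with $E_1$ serving as the multiplicative identity), and by (ii) and (iii) both generators carry $E_1 E_0^* E_1$ to scalar multiples of itself under left and right multiplication; hence $\mathbb{F} \cdot E_1 E_0^* E_1$ is closed under multiplication by $E_1 T E_1$ on both sides and is therefore a one-dimensional two-sided ideal. I do not anticipate any substantive obstacle: every part is a direct manipulation built from the reduction rules of Lemma~\ref{lem:reduction(dual)} together with the orthogonality $E_i E_j = \delta_{i,j} E_i$, strictly parallel to the calculation already carried out for Lemma~\ref{lem:E*1TE*1gen}.
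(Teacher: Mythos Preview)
Your proposal is correct and follows essentially the same approach as the paper: each of (i)--(iv) is proved exactly as you describe, via Lemmas~\ref{lem:E1*ME1*,E1TE1}(ii), \ref{lem:E1TE1=<E1M*E1>}, and the reduction rules of Lemma~\ref{lem:reduction(dual)}, in strict parallel to Lemma~\ref{lem:E*1TE*1gen}. The only minor difference is that in (iii) the paper first notes $q^h_{1,1}=0$ for $h>2$ before invoking $E_hE_1=\delta_{h,1}E_1$, whereas you use the idempotent orthogonality directly for all $h$; both are valid.
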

\begin{proof}
(i): By Lemmas \ref{lem:E1*ME1*,E1TE1}(ii) and \ref{lem:E1TE1=<E1M*E1>}.\\

\smallskip
\noindent
(ii): By Lemma \ref{lem:reduction(dual)}(i), we have
$$
	(E_1 E_0^* E_1)^2
	= E_1 (E_0^* E_1 E_0^*) E_1
	= m_1 |X|^{-1} E_1 E_0^* E_1.
$$

\noindent
(iii): We have
\begin{align*}
	(E_1 E_0^* E_1)(E_1 A^* E_1)
	& = E_1 (E_0^* E_1 A^*) E_1 \\
	& = E_1 \left( \sum_{i=1}^D q^h_{1,1} E_0^*E_h\right) E_1 && \text{(By Lemma \ref{lem:reduction(dual)}(ii))} \\
	& = E_1 (q^0_{1,1}E_0^*E_0 + q^1_{1,1}E_0^*E_1 + q^2_{1,1}E_0^*E_2)E_1 \\
	& = a_1^* E_1 E_0^*E_1 && \text{($a_1^* = q^1_{1,1}$ from \eqref{eq:dual int num abc})}.
\end{align*}
Similarly, using Lemma \ref{lem:reduction(dual)}(iii) we obtain $(E_1 A^* E_1)(E_1 E_0^* E_1) = a_1^* E_1 E_0^* E_1$.

\smallskip
\noindent
(iv): Immediate from (i)--(iii). 

\end{proof}

\section{The algebras $E_D^*TE_D^*$ and $E_DTE_D$}\label{sec:E*DTE*D}
We continue to discuss the $Q$-polynomial distance-regular graph $\Gamma$.
Fix $x\in X$ and write $T=T(x)$.
In this section, we discuss the algebras $E_D^*TE_D^*$ and $E_DTE_D$.

\begin{lemma}\label{lem:dimEDW=1}
Let $W$ denote an irreducible $T$-module with endpoint $r$, dual endpoint $s$, and diameter $d$.
Note that $r+d \leq D$ and $s+d \leq D$.
The following {\rm(i)}--{\rm(iv)} hold.
\begin{itemize}
\setlength\itemsep{0pt}
	\item[{\rm (i)}] If $r+d = D$, then $E_D^*W$ has dimension one.
	\item[{\rm (ii)}] If $r+d < D$, then $E_D^*W=0$.
	\item[{\rm (iii)}] If $s+d = D$, then $E_D W$ has dimension one.
	\item[{\rm (iv)}] If $s+d < D$, then $E_D W=0$.	
\end{itemize}

\end{lemma}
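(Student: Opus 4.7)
The plan is to derive all four claims directly from the sharpness of $W$ (Theorem~\ref{mainthm}) combined with the shape-symmetry result Lemma~\ref{lem: dim sym}. Since $W$ is an irreducible $T$-module, the results recalled in Section~\ref{sec:T-alg} and Section~\ref{sec:T-mod TDpairs} tell us exactly which $E_i^*W$ and $E_iW$ are nonzero: $E_i^*W \neq 0$ iff $r \leq i \leq r+d$, and $E_iW \neq 0$ iff $s \leq i \leq s+d$ (using that the dual diameter of $W$ equals the diameter $d$). This immediately handles the ``vanishing'' cases.

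First I would dispose of \rm(ii) and \rm(iv). If $r+d<D$, then $D$ lies outside the range $[r,r+d]$, so by the above characterization $E_D^*W=0$; this gives \rm(ii). Likewise, if $s+d<D$ then $D\notin[s,s+d]$, so $E_DW=0$, giving \rm(iv). No further input is needed for these.

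For \rm(i), assume $r+d=D$. Then $E_D^*W=E_{r+d}^*W$. Applying Lemma~\ref{lem: dim sym} with $i=0$ shows that the four subspaces $E_r^*W$, $E_{r+d}^*W$, $E_sW$, $E_{s+d}W$ all have the same dimension, namely $\rho_0$. By Theorem~\ref{mainthm}, the module $W$ is sharp, i.e.\ $\rho_0=\dim(E_r^*W)=1$. Hence $\dim(E_D^*W)=\rho_0=1$. For \rm(iii), assume $s+d=D$. Then $E_DW=E_{s+d}W$, and by the same instance of Lemma~\ref{lem: dim sym} its dimension equals $\rho_0=1$ by sharpness.

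There is no real obstacle: the only nontrivial ingredient is sharpness of irreducible $T^{\mathbb{R}}$-modules, which is supplied by Theorem~\ref{mainthm} (and over $\mathbb{C}$ by the Nomura--Terwilliger result), and the symmetry transfer between the four subspaces in \eqref{eq: 4 subspaces} handles both the ``primal'' and ``dual'' halves uniformly. The whole proof should therefore be a short paragraph invoking these two facts and the description of the nonvanishing $E_i^*W$, $E_iW$.
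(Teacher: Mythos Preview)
Your proposal is correct and takes essentially the same approach as the paper: parts (ii) and (iv) follow ``by construction'' from the description of which $E_i^*W$ and $E_iW$ are nonzero, while parts (i) and (iii) follow from Theorem~\ref{mainthm} together with Lemma~\ref{lem: dim sym}. The paper's proof is just a terser version of exactly what you wrote.
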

\begin{proof}
(i),(iii): By Theorem \ref{mainthm} and Lemma \ref{lem: dim sym}.\\
(ii),(iv): By construction. 
\end{proof}

The following lemmas are analogues to Lemmas~\ref{NT:Thm2.6} and~\ref{NT:Thm2.6 dual}.

\begin{lemma}[{\cite[Theorem 2.6]{NT2008LAA}}]\label{NT:Thm2.6 sym version}
Let $W$ denote an irreducible $T$-module with endpoint $r$, dual endpoint $s$, and diameter $d$.
Then the following {\rm(i)}--{\rm(iii)} hold on $W$.
\begin{itemize}
\setlength{\itemsep}{0pt}
	\item[\rm(i)] The algebra $E_{r+d}^* T E_{r+d}^*$ is generated by $E_{r+d}^* M E_{r+d}^*$.
	\item[\rm(ii)] The elements of $E_{r+d}^* M E_{r+d}^*$ mutually commute.
	\item[\rm(iii)] The algebra $E_{r+d}^* T E_{r+d}^*$ is commutative.
\end{itemize}
\end{lemma}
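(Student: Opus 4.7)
The plan is to deduce Lemma~\ref{NT:Thm2.6 sym version} from Lemma~\ref{NT:Thm2.6} by means of the inversion $\downarrow\Downarrow$ of the TD system associated with $W$. Recall from Lemma~\ref{A,A*;TDsystem} that the sequence
$$
\Phi(W) = (A; \{E_{s+i}\}_{i=0}^{d}; A^*; \{E_{r+i}^*\}_{i=0}^{d})
$$
acts on $W$ as a TD system. Reversing simultaneously the $A$-ordering and the $A^*$-ordering produces
$$
\Phi(W)^{\downarrow\Downarrow} = (A; \{E_{s+d-i}\}_{i=0}^{d}; A^*; \{E_{r+d-i}^*\}_{i=0}^{d}),
$$
which is again a TD system on $W$ by \cite[Lemma~2.4]{ITT} together with the remarks following Definition~\ref{def:TDsystem}. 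In $\Phi(W)^{\downarrow\Downarrow}$ the dual primitive idempotent with new index $0$ is precisely $E_{r+d}^*$, so in the reversed labelling $r+d$ plays the role of the endpoint.

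The next step is to observe that the subalgebras $M$ and $T$ of $\Mat_X(\mathbb{F})$ are unchanged when one passes from $\Phi(W)$ to $\Phi(W)^{\downarrow\Downarrow}$: they depend only on the generating matrices $A$ and $A^*$, not on the particular standard ordering of their eigenspaces. Therefore Lemma~\ref{NT:Thm2.6}, applied to $W$ together with the TD system $\Phi(W)^{\downarrow\Downarrow}$, yields exactly the three conclusions of the present lemma: the algebra $E_{r+d}^*TE_{r+d}^*$ is generated on $W$ by $E_{r+d}^*ME_{r+d}^*$, the elements of $E_{r+d}^*ME_{r+d}^*$ mutually commute on $W$, and $E_{r+d}^*TE_{r+d}^*$ is commutative on $W$.

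I do not anticipate a major obstacle. The only point requiring attention is to confirm that the proof of Lemma~\ref{NT:Thm2.6} in \cite{NT2008LAA} depends solely on $W$ being an irreducible $T$-module equipped with a TD system structure, and not on any feature that distinguishes the ``forward'' ordering from its reverse; this is indeed the case in that reference. As a sanity check for part (ii), one may alternatively invoke Lemma~\ref{lem: E*AE*sym}(i): every element of $E_{r+d}^*ME_{r+d}^*$ is symmetric, and these elements lie in the image of the commutative algebra $M$, so they automatically mutually commute; and part (iii) then follows from (i) and (ii). However, the reduction via $\downarrow\Downarrow$ is the cleanest route and handles all three parts in one step.
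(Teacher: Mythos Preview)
Your proposal is correct and takes essentially the same approach as the paper: the paper's proof is the single line ``By \cite[Theorem~2.6]{NT2008LAA} with $\{E_{r+i}^*\}_{i=0}^d$ replaced by $\{E_{r+d-i}^*\}_{i=0}^d$,'' i.e., it too applies the Nomura--Terwilliger result to the TD system with reversed dual ordering. The only cosmetic difference is that you reverse both orderings ($\downarrow\Downarrow$) while the paper reverses only the dual one ($\Downarrow$); this is immaterial since the conclusion involves only $E_{r+d}^*$, $M$, and $T$.
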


\begin{proof} 
By \cite[Theorem 2.6]{NT2008LAA} with $\{E_{r+i}^*\}_{i=0}^d$ replaced by $\{E_{r+d-i}^*\}_{i=0}^d$.
\end{proof}

\begin{lemma}[{\cite[Theorem 2.6]{NT2008LAA}}]
Let $W$ denote an irreducible $T$-module with endpoint $r$, dual endpoint $s$, and diameter $d$.
Then the following {\rm(i)}--{\rm(iii)} hold on $W$.
\begin{itemize}
\setlength{\itemsep}{0pt}
	\item[\rm(i)] The algebra $E_{s+d} T E_{s+d}$ is generated by $E_{s+d} M^* E_{s+d}$.
	\item[\rm(ii)] The elements of $E_{s+d} M^* E_{s+d}$ mutually commute.
	\item[\rm(iii)] The algebra $E_{s+d} T E_{s+d}$ is commutative.
\end{itemize}
\end{lemma}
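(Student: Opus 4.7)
The plan is to deduce this lemma from Lemma \ref{NT:Thm2.6 dual} by applying the standard order-reversing symmetry to the TD system on $W$. By Lemma \ref{A,A*;TDsystem}, the sequence $\Phi = (A; \{E_{s+i}\}_{i=0}^d; A^*; \{E_{r+i}^*\}_{i=0}^d)$ is a TD system on $W$ with endpoint $r$ and dual endpoint $s$. As noted after Definition \ref{def:TDsystem}, the inverted ordering $\{E_{s+d-i}\}_{i=0}^d$ is again a standard ordering of the eigenspaces of $A$, so $\Phi^{\Downarrow} = (A; \{E_{s+d-i}\}_{i=0}^d; A^*; \{E_{r+i}^*\}_{i=0}^d)$ is also a TD system on $W$. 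For $\Phi^{\Downarrow}$, the first primitive idempotent of $A$ is now $E_{s+d}$; that is, the dual endpoint of $W$ relative to $\Phi^{\Downarrow}$ is $s+d$, while the endpoint and diameter are unchanged.

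Applying Lemma \ref{NT:Thm2.6 dual} with $\Phi^{\Downarrow}$ in place of $\Phi$ then yields, on $W$, that the algebra $E_{s+d} T E_{s+d}$ is generated by $E_{s+d} M^* E_{s+d}$ and that the elements of $E_{s+d} M^* E_{s+d}$ mutually commute. This establishes (i) and (ii). Part (iii) is immediate: an algebra generated (with identity $E_{s+d}$) by a family of pairwise commuting elements is itself commutative.

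The only point that needs verification is that \cite[Theorem~2.6]{NT2008LAA}, which underlies Lemma \ref{NT:Thm2.6 dual}, depends only on the abstract TD-system structure and not on the particular indexing of the primitive idempotents of $A$. Since $\Phi^{\Downarrow}$ is a bona fide TD system on $W$, the conclusion transfers verbatim. This is exactly the same device that was used to pass from Lemma \ref{NT:Thm2.6} to Lemma \ref{NT:Thm2.6 sym version}, now applied on the dual side, so I do not anticipate any substantive obstacle.
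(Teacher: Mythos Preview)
Your proposal is correct and matches the paper's own approach: the paper's proof simply says ``Similar to Lemma~\ref{NT:Thm2.6 sym version}'', and that lemma in turn invokes \cite[Theorem~2.6]{NT2008LAA} with the ordering of the relevant primitive idempotents reversed---exactly the $\Phi^{\Downarrow}$ device you describe. Your observation that \cite[Theorem~2.6]{NT2008LAA} is a statement about abstract TD systems (and hence transfers to the reversed ordering) is the key point, and it is correct.
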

\begin{proof} 
Similar to Lemma \ref{NT:Thm2.6 sym version}.
\end{proof}

\begin{proposition}\label{lem:ED*TED* comm}
The algebras $E_D^* T E_D^*$ and $E_D T E_D$ are commutative.
\end{proposition}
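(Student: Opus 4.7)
The plan is to adapt the strategy used in the proof of Proposition~\ref{lem:E1*TE1* comm}. The key observation is that Lemma~\ref{lem:dimEDW=1} gives us the dichotomy $\dim(E_D^* W) \leq 1$ for every irreducible $T$-module $W$, and similarly $\dim(E_D W) \leq 1$. This essentially reduces the commutativity question to a trivial one on each irreducible summand.

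First, since $T$ is closed under the conjugate-transpose map, the standard module $V$ decomposes as an orthogonal direct sum of irreducible $T$-modules. It therefore suffices to show that the restriction of $E_D^* T E_D^*$ (respectively, $E_D T E_D$) to each irreducible $T$-module is commutative. Let $W$ denote an irreducible $T$-module with endpoint $r$, dual endpoint $s$, and diameter $d$; recall $r + d \leq D$ and $s + d \leq D$.

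Next, I apply Lemma~\ref{lem:dimEDW=1}. If $r + d = D$, then $\dim(E_D^* W) = 1$; otherwise $E_D^* W = 0$. In either case, $E_D^* W$ has dimension at most one. Since every element of $E_D^* T E_D^*$ acts on $W$ by preserving the subspace $E_D^* W$ (with $E_D^*$ acting as the multiplicative identity) and annihilating the orthogonal complement of $E_D^* W$ inside $W$ in the sense that the action of any $B \in E_D^* T E_D^*$ on $W$ factors through the at-most-one-dimensional space $E_D^* W$, the resulting endomorphism algebra on $W$ is at most one-dimensional, hence commutative. The argument for $E_D T E_D$ is identical, using parts (iii) and (iv) of Lemma~\ref{lem:dimEDW=1} in place of (i) and (ii): either $\dim(E_D W) = 1$ or $E_D W = 0$, and so $E_D T E_D$ acts commutatively on $W$.

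The only mild subtlety is making precise why an algebra acting on a space of dimension at most one through its identity element is commutative; this is immediate because the action on the nonzero part is just multiplication by scalars in $\mathbb{F}$. There is no real obstacle here: the result is essentially an immediate corollary of Theorem~\ref{mainthm} (sharpness) combined with the shape symmetry $\rho_i = \rho_{d-i}$ from Lemma~\ref{lem: dim sym}, both of which are already established by this point in the paper.
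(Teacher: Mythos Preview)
Your proof is correct and follows essentially the same approach as the paper: decompose $V$ into irreducible $T$-modules, invoke Lemma~\ref{lem:dimEDW=1} to see that $E_D^*W$ (respectively $E_DW$) has dimension at most one, and conclude that the action of $E_D^*TE_D^*$ (respectively $E_DTE_D$) on each $W$ is commutative. The paper's version is just more terse, omitting the explicit justification you give in your last paragraph.
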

\begin{proof}
We first show that $E_D^* T E_D^*$ is commutative.  
Since $V$ is an orthogonal direct sum of irreducible $T$-modules, it suffices to show that the action of $E_D^* T E_D^*$ on any irreducible $T$-module is commutative.
Let $W$ be an irreducible $T$-module.
By Lemma~\ref{lem:dimEDW=1}(i),(ii), either $\dim(E_D^* W) = 1$ or $E_D^*W=0$.
In either case, the action of $E_D^* T E_D^*$ on $W$ is commutative.
We have shown that $E_D^* T E_D^*$ is commutative.
The argument for $E_D T E_D$ is similar.
\end{proof}

Let $W$ denote an irreducible $T$-module with endpoint $r$ and diameter $d$ satisfying $r+d=D$.
Pick $0 \neq w \in E_D^*W$.
Since $E_D^*W$ is invariant under $E_D^* M E_D^*$ and by Lemma \ref{lem:dimEDW=1}(i), there exist scalars $\{ \varphi_h \}_{h=1}^D$ in $\mathbb{R}$ such that
\begin{equation}\label{eig seq E*DW}
	E_D^* A_h E_D^*w =  \varphi_h w \qquad (1 \leq h \leq D).
\end{equation}
We call $\{ \varphi_h \}_{h=1}^D$ the \emph{local eigenvalue sequence} of $E_D^*W$.

\begin{lemma}\label{lem:W iff mu, E*D version}
Let $W$ and $W'$ denote irreducible $T$-modules with endpoint $r$ and diameter $d$ satisfying $r+d=D$.
Then the $T$-modules $W$ and $W'$ are isomorphic if and only if the local eigenvalue sequences of $E_D^*W$ and $E_D^*W'$ coincide.
\end{lemma}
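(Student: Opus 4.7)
My plan is to model the proof after that of Lemma \ref{lem:W iff mu}, with the single local eigenvalue $\mu$ replaced by the local eigenvalue sequence $\{\varphi_h\}_{h=1}^D$, the idempotent $E_1^*$ replaced by $E_D^*$, and Lemma \ref{NT:Thm2.6 sym version}(i) invoked in place of Lemma \ref{NT:Thm2.6}(i). Throughout, I will fix nonzero vectors $w \in E_D^*W$ and $w' \in E_D^*W'$, which span those one-dimensional subspaces by Lemma \ref{lem:dimEDW=1}(i).

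For the forward direction, I would start from a $T$-module isomorphism $\sigma : W \to W'$. Since $\sigma$ commutes with $E_D^*$, it sends $E_D^*W$ onto $E_D^*W'$, so $\sigma(w) = \beta w'$ for some $0 \neq \beta \in \mathbb{F}$. Applying $\sigma$ to $E_D^*A_hE_D^*w = \varphi_h w$ and using $T$-linearity yields $\varphi_h \beta w' = \beta \varphi'_h w'$ for each $1 \leq h \leq D$, whence $\varphi_h = \varphi'_h$.

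For the reverse direction, I would assume $\varphi_h = \varphi'_h$ for $1 \leq h \leq D$ and aim to verify that the map $\sigma : W \to W'$ defined by $Bw \mapsto Bw'$ (for $B \in T$) is a well-defined $T$-module isomorphism. Following the strategy of Lemma \ref{lem:W iff mu}, this reduces to the claim that $Bw = 0$ if and only if $Bw' = 0$ for every $B \in T$. Since $w = E_D^*w$, the condition $Bw = 0$ is equivalent to $\overline{w}^\top \Theta w = 0$ where $\Theta := E_D^*\overline{B}^\top B E_D^* \in E_D^*TE_D^*$, and similarly for $w'$. By Lemma \ref{NT:Thm2.6 sym version}(i) applied to $W$, combined with the fact that $M$ has basis $\{A_h\}_{h=0}^D$ so that $E_D^*ME_D^*$ is spanned by $\{E_D^*A_hE_D^*\}_{h=0}^D$, the restriction $\Theta|_W$ can be written as a polynomial in $\{E_D^*A_hE_D^*|_W\}_{h=0}^D$; evaluating this polynomial on $w$ and using $E_D^*A_hE_D^*w = \varphi_h w$ (with $\varphi_0 = 1$) produces $\Theta w = p(\varphi_1, \ldots, \varphi_D)\, w$ for some $p \in \mathbb{F}[\lambda_1, \ldots, \lambda_D]$.

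The main obstacle I expect, paralleling the analogous step in Lemma \ref{lem:W iff mu}, is justifying that the same polynomial $p$ witnesses $\Theta w' = p(\varphi'_1, \ldots, \varphi'_D)\,w'$ on $W'$. I plan to address this by applying Lemma \ref{NT:Thm2.6 sym version}(i) simultaneously to $W$ and $W'$ and working in the submodule $W + W'$ of $V$, so that $\Theta|_{W+W'}$ admits a single polynomial expression in $\{E_D^*A_hE_D^*|_{W+W'}\}_{h=0}^D$, which then yields the same $p$ upon evaluation at $w$ and at $w'$. Under the hypothesis $\varphi_h = \varphi'_h$, the two scalars $p(\varphi_1,\ldots,\varphi_D)$ and $p(\varphi'_1,\ldots,\varphi'_D)$ coincide and in particular vanish together, establishing the claim. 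Once the claim holds, $\sigma$ is well-defined and $T$-linear by construction, injective by the claim, and surjective by $W' = Tw'$ and the irreducibility of the $T$-module $W'$; hence $\sigma$ is a $T$-module isomorphism.
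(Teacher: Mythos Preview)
Your proof mirrors the paper's almost exactly: the forward direction, and the reduction of the converse to the claim ``$Bw=0 \Leftrightarrow Bw'=0$'' via $\Theta := E_D^*\overline{B}^\top B E_D^* \in E_D^*TE_D^*$ together with Lemma~\ref{NT:Thm2.6 sym version}(i), are precisely what the paper does.

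The one place you go beyond the paper is in trying to justify that the \emph{same} polynomial $\phi$ (rather than a possibly different $\phi'$) represents $\Theta$ on $W'$. Your proposed fix---apply Lemma~\ref{NT:Thm2.6 sym version}(i) to the $T$-module $W+W'$ so as to obtain one polynomial expression valid on both summands---does not work as stated: that lemma is formulated for an \emph{irreducible} $T$-module (so that $A,A^*$ act on it as a TD pair), whereas $W+W'$ is generally reducible, so condition~(iv) of Definition~\ref{def1} fails and the hypothesis of \cite[Theorem~2.6]{NT2008LAA} is not available there. The paper itself handles this step with a bare ``similarly,'' reusing the same $\phi$ without further comment; so your version is not less rigorous than the paper's, but the specific mechanism you propose for closing the gap is not correct.
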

\begin{proof}
Let $\{ \varphi_h \}_{h=1}^D$ (resp. $\{ \varphi'_h \}_{h=1}^D$) denote the local eigenvalue sequence of $E_D^*W$ (resp. $E_D^*W'$).
Pick $0 \neq w \in E_D^*W$ and $0 \neq w' \in E_D^*W'$.
By Lemma \ref{lem:dimEDW=1}(i), we have $E_D^*W = \operatorname{Span}\{w\}$ and $E_D^*W' = \operatorname{Span}\{w'\}$. 
Observe that $w$ (resp. $w'$) is an eigenvector of $E_D^*A_hE_D^*$ corresponding to the eigenvalue $\varphi_h$ (resp. $\varphi'_h$) for $1 \leq h \leq D$.

\smallskip
First, suppose that the $T$-modules $W$ and $W'$ are isomorphic.
There exists a $T$-module isomorphism $\sigma: W \to W'$.
Since $w$ (resp. $w'$) is a basis for $E_D^*W$ (resp. $E_D^*W'$), we have $\sigma(w) = \beta w'$ for some $0 \neq \beta \in \mathbb{F}$.
Then for $0 \leq h \leq D$
\begin{equation*}
	0 = (\sigma E_D^*A_hE_D^* - E_D^*A_hE_D^* \sigma)w = \sigma E_D^*A_hE_D^* w - E_D^*A_hE_D^* \sigma(w) = \beta(\varphi_h - \varphi'_h)w'.
\end{equation*}
Since $\beta \neq 0$ and $w' \neq 0$, it follows that $\varphi_h=\varphi'_h$ for $1 \leq h \leq D$.

\smallskip
Conversely, suppose that $\varphi_h = \varphi'_h$ for $1 \leq h \leq D$.
We will show that $W$ and $W'$ are isomorphic as $T$-modules.
Observe that $W=Tw$ and $W'=Tw'$ by the irreducibility.
We first claim that for any $B\in T$, we have $Bw=0$ if and only if $Bw'=0$.
Observe that $w=E_D^*w$, and thus 
\begin{equation}\label{eq(1):Bw=0 iff...; D version}
	Bw = 0 
	\ \ \Longleftrightarrow \ \
	BE_D^*w = 0 
	\ \ \Longleftrightarrow \ \
	\lVert BE_D^*w \rVert^2 = 0
	\ \ \Longleftrightarrow \ \
	\overline{w}^\top E_D^*\overline{B}^\top B E_D^* w =0.
\end{equation}
Consider the matrix $E_D^*\overline{B}^\top B E_D^*$.
Since $\overline{B}^\top B \in T$, it follows that $E_D^*\overline{B}^\top B E_D^* \in E_D^*TE_D^*$. 
By Lemma~\ref{NT:Thm2.6 sym version}(i), and since $M$ is spanned by $\{A_h\}_{h=0}^D$ with $A_0 = I$, the algebra $E_D^* T E_D^*$ is generated  on $W$ by the elements $\{E_D^* A_h E_D^*\}_{h=1}^D$.
Therefore, the restriction of $E_D^*\overline{B}^\top B E_D^*$ to $W$ can be expressed in terms of the restrictions of $\{E_D^* A_h E_D^*\}_{h=1}^D$ to $W$.
Hence, there exists a polynomial $\phi \in \mathbb{F}[\lambda_1, \lambda_2, \ldots, \lambda_D]$ such that on $W$,
\begin{equation}\label{pf:eq E*DBtBD*}
	E_D^*\overline{B}^\top B E_D^* =  \phi (E_D^* A E_D^*, \ E_D^* A_2 E_D^*, \ \ldots \ , \ E_D^* A_D E_D^*).
\end{equation}
We now apply $E_D^*\overline{B}^\top B E_D^*$ to $w$. 
Using \eqref{eig seq E*DW} and \eqref{pf:eq E*DBtBD*}, we find that on $W$,
\begin{equation}\label{pf:eq E*DBtBD*w}
	E_D^*\overline{B}^\top B E_D^*w 
	= \phi (E_D^* A E_D^*, \ E_D^* A_2 E_D^*, \ \ldots \ , \ E_D^* A_D E_D^*)w 
	= \phi (\varphi_1, \varphi_2, \ldots, \varphi_D) w.
\end{equation}
Take the inner product of each term in \eqref{pf:eq E*DBtBD*w} with $w$ to obtain
$$
	\overline{w}^\top E_D^*\overline{B}^\top B E_D^* w 
	= \phi (\varphi_1, \varphi_2, \ldots, \varphi_D) \lVert w \rVert^2.
$$
Therefore, 
\begin{equation}\label{eq(2):Bw=0 iff...; D version}
\begin{split}
	\overline{w}^\top E_D^*\overline{B}^\top B E_D^* w =0 
	& \ \ \Longleftrightarrow \ \	
	\phi (\varphi_1, \varphi_2, \ldots, \varphi_D) \lVert w \rVert^2 = 0 \\
	& \ \ \Longleftrightarrow \ \
	\phi (\varphi_1, \varphi_2, \ldots, \varphi_D) = 0.
\end{split}
\end{equation}
By \eqref{eq(1):Bw=0 iff...; D version} and \eqref{eq(2):Bw=0 iff...; D version}, we obtain $Bw=0$ if and only if $\phi (\varphi_1, \varphi_2, \ldots, \varphi_D)=0$.
Similarly, we obtain $Bw'=0$ if and only if $\phi (\varphi_1, \varphi_2, \ldots, \varphi_D)=0$.
Therefore, $Bw=0$ if and only if $Bw'=0$ as claimed.
Next, define the map $\sigma: W \to W'$ that sends $Bw$ to $Bw'$ for all $B \in T$.
By the claim, this map is well-defined.
One verifies that $\sigma$ is a $T$-module homomorphism.
Moreover, $\sigma$ is injective by the claim and surjective by irreducibility.
Consequently, $\sigma$ is a bijection, and hence a $T$-module isomorphism.
The result follows.
\end{proof}

\begin{lemma}\label{lem:ED*TED*=<ED*MED*>}
The algebra $E_D^* T E_D^*$ is generated by $E_D^* M E_D^*$.
\end{lemma}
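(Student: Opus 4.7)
The plan is to mirror the proof of Lemma~\ref{lem:E1*TE1*=<E1*ME1*>}, replacing the endpoint-$1$ isomorphism criterion of Lemma~\ref{lem:W iff mu} by the ``$r+d=D$'' criterion of Lemma~\ref{lem:W iff mu, E*D version}. Let $W_1,\ldots,W_\ell$ be a complete list of the mutually non-isomorphic irreducible $T$-modules satisfying $r+d=D$; by Lemma~\ref{lem:dimEDW=1} these are exactly the ones for which $E_D^*W\ne 0$, and each $E_D^*W_i$ is one-dimensional. Decomposing $V$ into irreducible $T$-modules and applying $E_D^*$, the space $E_D^*V$ becomes an orthogonal direct sum of copies of the one-dimensional spaces $E_D^*W_i$, each of which is an $E_D^*TE_D^*$-submodule. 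Since $E_D^*TE_D^*$ is commutative (Proposition~\ref{lem:ED*TED* comm}) and acts faithfully on $E_D^*V$, Wedderburn's theorem gives an $\mathbb{F}$-algebra isomorphism $E_D^*TE_D^*\cong \mathbb{F}^{\oplus \ell}$, and in particular $\dim(E_D^*TE_D^*)=\ell$.

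Next I would produce a single generator inside $E_D^*ME_D^*$. For each $i$, let $\{\varphi^{(i)}_h\}_{h=1}^D$ denote the local eigenvalue sequence of $E_D^*W_i$, so that each $E_D^*A_hE_D^*$ acts on $E_D^*W_i$ as the scalar $\varphi^{(i)}_h$. By Lemma~\ref{lem:W iff mu, E*D version}, the $\ell$ vectors $(\varphi^{(i)}_1,\ldots,\varphi^{(i)}_D)\in \mathbb{F}^D$ $(1\le i\le \ell)$ are mutually distinct. Because $\mathbb{F}$ is infinite, one can choose scalars $c_1,\ldots,c_D\in\mathbb{F}$ such that the values $\sum_{h=1}^D c_h\varphi^{(i)}_h$ for $1\le i\le\ell$ are mutually distinct, the vanishing locus of any one separating constraint being a proper hyperplane in $\mathbb{F}^D$. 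Setting
\begin{equation*}
	K=\sum_{h=1}^D c_h\,E_D^*A_hE_D^*\in E_D^*ME_D^*,
\end{equation*}
the matrix $K$ has $\ell$ distinct eigenvalues on $E_D^*V$, so its minimal polynomial on $E_D^*V$ has degree $\ell$. Hence $E_D^*,K,K^2,\ldots,K^{\ell-1}$ are linearly independent in $E_D^*TE_D^*$, and by the dimension count above they form a basis. Thus $K$ alone generates $E_D^*TE_D^*$, which proves $E_D^*TE_D^*$ is generated by $E_D^*ME_D^*$ (the reverse containment being automatic).

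The step requiring the most care is the construction of the single generator in $E_D^*ME_D^*$. In the endpoint-$1$ analogue, the isomorphism class of $W$ is detected by a \emph{single} scalar, the eigenvalue of $E_1^*AE_1^*$, so the matrix $K=E_1^*\mathbb{J}E_1^*+E_1^*AE_1^*$ does the job directly. Here the invariant separating isomorphism classes is an entire $D$-tuple of local eigenvalues, and one must collapse it to a separating scalar by a generic linear combination; this is the only substantive new point, and it is elementary since $\mathbb{F}\in\{\mathbb{R},\mathbb{C}\}$ is infinite. No other obstacle is expected.
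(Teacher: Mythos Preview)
Your proposal is correct and follows essentially the same approach as the paper's proof: both use the Wedderburn decomposition to show $\dim(E_D^*TE_D^*)=\ell$, then invoke Lemma~\ref{lem:W iff mu, E*D version} to see that the local eigenvalue vectors $(\varphi^{(i)}_1,\ldots,\varphi^{(i)}_D)$ are distinct, and finally choose a generic linear combination $\sum_h c_h\,E_D^*A_hE_D^*$ in $E_D^*ME_D^*$ that separates them, yielding a single generator. Your commentary on why a generic linear combination is needed here (as opposed to the endpoint-$1$ case) matches exactly the new ingredient in the paper's argument.
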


\begin{proof}
Let $\ell$ denote the number of mutually non-isomorphic irreducible $T$-modules with endpoint $r$ and diameter $d$ satisfying $r+d = D$.
Note that $\ell \geq 1$ since the primary module has endpoint $0$ and diameter $D$.
Let $W_1, \ldots, W_\ell$ denote mutually non-isomorphic irreducible $T$-modules with endpoint $r$ and diameter $d$ satisfying $r+d = D$.
For $1 \leq i \leq \ell$, let $\mult_i$ denote the multiplicity of $W_i$ in $V$. 
By construction, $V$ contains a direct sum
\begin{equation}\label{eq:ds W_ij}
	\sum_{i=1}^\ell \sum_{j=1}^{\mult_i} W_i^{(j)}
\end{equation}
where each $W_i^{(j)}$ is an irreducible $T$-module isomorphic to $W_i$.
Applying $E_D^*$ to \eqref{eq:ds W_ij}, we get
\begin{equation}\label{eq:ds ED*V}
	E_D^*V = \sum_{i=1}^\ell \sum_{j=1}^{\mult_i} E_D^*W_i^{(j)} \qquad \text{(direct sum)}.
\end{equation}
By construction, $E_D^* T E_D^*$ acts faithfully on $E_D^*V$.
Moreover, each summand in \eqref{eq:ds ED*V} is an $E_D^* T E_D^*$-submodule and has dimension one by Lemma \ref{lem:dimEDW=1}(i).
By Wedderburn's theorem \cite{CR1962} we have an $\mathbb{F}$-algebra isomorphism
\begin{equation}\label{eq:ED*TED* iso F...F}
	E_D^* T E_D^* \cong \mathbb{F} \oplus \mathbb{F} \oplus \cdots \oplus \mathbb{F} \qquad \text{($\ell$ copies)}.
\end{equation}
In particular, $\dim(E_D^* T E_D^*) = \ell$.

\smallskip
Next, for $1 \leq i \leq \ell$ and $1 \leq j \leq \mult_i$, pick $0 \neq w_i^{(j)} \in E_D^* W_i^{(j)}$.
Observe that 
\begin{equation}\label{eq: basis ED*V wi,j}
	\{ w_i^{(j)} \mid 1 \leq i \leq \ell, \quad 1 \leq j \leq \mult_i \}
\end{equation}
forms a basis for $E_D^*V$.
For each $1 \leq i \leq \ell$, let $\{ \varphi_{i,h} \}_{h=1}^D$ denote the local eigenvalue sequence of $E_D^*W_i$; that is, the scalars $\{ \varphi_{i,h} \}_{h=1}^D$ satisfy
\begin{equation}\label{eq:ED*AhED*.w_ij}
	E_D^* A_h E_D^* w_i^{(j)} = \varphi_{i,h} w_i^{(j)} \qquad (1 \leq h \leq D, \ 1 \leq j \leq \mult_i).
\end{equation}
For $1 \leq i \leq \ell$, we view the local eigenvalue sequence $\{\varphi_{i,h}\}_{h=1}^D$ of $E_D^*W_i$ as a vector
\begin{equation}
	u_i := (\varphi_{i,1}, \; \varphi_{i,2}, \; \ldots, \; \varphi_{i,D}) \in \mathbb{F}^D.
\end{equation}
Since the $T$-modules $W_1, W_2, \ldots , W_\ell$ are mutually non-isomorphic, it follows from Lemma \ref{lem:W iff mu, E*D version} that the vectors $\{ u_i \}_{i=1}^\ell$ are mutually distinct.
By this fact and since the field $\mathbb{F}$ is infinite, there exists a linear transformation $f: \mathbb{F}^D \to \mathbb{F}$ such that the scalars $\{ f(u_i) \}_{i=1}^\ell$ are mutually distinct. 
Write this linear transformation as
$$
	f( \lambda_1, \lambda_2, \ldots, \lambda_D ) = \sum_{i=1}^D \alpha_i \lambda_i
$$
for some $\alpha_1, \ldots, \alpha_D \in \mathbb{F}$.
Define the matrix $\Delta \in \Mat_X(\mathbb{F})$ by
\begin{equation}\label{eq:Delta}
	\Delta = \sum_{h=1}^D \alpha_h E_D^* A_h E_D^*.
\end{equation}
Observe that $\Delta \in E_D^* M E_D^*$.
Moreover, by \eqref{eq:ED*AhED*.w_ij} 
\begin{equation}
	\Delta w_i^{(j)} = f(u_i) w_i^{(j)} \qquad (1 \leq i \leq \ell, \quad 1 \leq j \leq \mult_i).
\end{equation}
This implies that the action of $\Delta$ on $E_D^*V$ has $\ell$ distinct eigenvalues $\{f(u_i)\}_{i=1}^\ell$.
Since $\Delta$ is real and symmetric, its action on $E_D^*V$ has minimal polynomial of degree $\ell$.
It follows that the matrices
\begin{equation}\label{eq:mat Delta^i}
	E_D^*, \ \Delta, \ \Delta^2, \ldots, \Delta^{\ell-1}
\end{equation}
are linearly independent.
By this and since $\dim(E_D^* T E_D^*)=\ell$, the matrices \eqref{eq:mat Delta^i} form a basis for $E_D^* T E_D^*$.
Therefore, $\Delta$ generates $E_D^* T E_D^*$.
By this and since $\Delta \in E_D^* M E_D^*$, the result follows.
\end{proof}

\begin{proposition}\label{lem:E*DTE*D}
Every element of $E_D^*TE_D^*$ is symmetric.
\end{proposition}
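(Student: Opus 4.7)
The plan is to mirror the structure of the proof of Proposition~\ref{lem:E*1TE*1}, which treats the analogous statement for $E_1^*TE_1^*$. The three ingredients that made that proof work are all available in the present setting: Lemma~\ref{lem: E*AE*sym}(i) which says every element of $E_i^*ME_i^*$ is symmetric for each $i$, Proposition~\ref{lem:ED*TED* comm} which says $E_D^*TE_D^*$ is commutative, and Lemma~\ref{lem:ED*TED*=<ED*MED*>} which says $E_D^*TE_D^*$ is generated by $E_D^*ME_D^*$.

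First I would invoke Lemma~\ref{lem: E*AE*sym}(i) with $i=D$ to conclude that every element of $E_D^*ME_D^*$ is symmetric. Next I would apply Lemma~\ref{lem:ED*TED*=<ED*MED*>} to express an arbitrary element $B\in E_D^*TE_D^*$ as a polynomial (without constant term, or with $E_D^*$ playing the role of the identity) in elements of $E_D^*ME_D^*$. Finally, I would use the commutativity of $E_D^*TE_D^*$ from Proposition~\ref{lem:ED*TED* comm}: since $(XY)^\top = Y^\top X^\top = YX = XY$ whenever $X,Y$ are commuting symmetric matrices, products of the symmetric generators remain symmetric; together with the trivial fact that sums and scalar multiples of symmetric matrices are symmetric, this shows $B^\top = B$.

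There is really no obstacle here, since all the serious work has been done in Lemma~\ref{lem:ED*TED*=<ED*MED*>} (which required sharpness via Theorem~\ref{mainthm} and the local-eigenvalue-sequence argument). The present proposition is a formal consequence packaged in two or three lines, exactly in parallel with Proposition~\ref{lem:E*1TE*1}.
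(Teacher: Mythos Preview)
Your proposal is correct and matches the paper's own proof essentially line for line: the paper invokes Lemma~\ref{lem: E*AE*sym}(i) for the symmetry of $E_D^*ME_D^*$, then Proposition~\ref{lem:ED*TED* comm} and Lemma~\ref{lem:ED*TED*=<ED*MED*>} for commutativity and generation, and concludes. Your extra sentence spelling out why products of commuting symmetric matrices stay symmetric is fine but not needed.
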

\begin{proof}
By Lemma~\ref{lem: E*AE*sym}(i), every element of $E_D^* M E_D^*$ is symmetric.
By Proposition~\ref{lem:ED*TED* comm} and Lemma~\ref{lem:ED*TED*=<ED*MED*>}, the algebra $E_D^* T E_D^*$ is commutative and generated by $E_D^* M E_D^*$.
The result follows.
\end{proof}

We now discuss the algebra $E_DTE_D$.

\begin{lemma}
The algebra $E_D T E_D$ is generated by $E_D M^* E_D$.
\end{lemma}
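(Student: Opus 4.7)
The plan is to carry out the dual of the proof of Lemma~\ref{lem:ED*TED*=<ED*MED*>}, with the roles of $E_D^*$, $M$, endpoint, and $A_h$ interchanged with $E_D$, $M^*$, dual endpoint, and $A_h^*$ respectively. Throughout the argument, Lemma~\ref{lem:dimEDW=1}(iii),(iv) plays the role previously played by parts (i),(ii), and the analogue of Lemma~\ref{NT:Thm2.6 sym version} on the dual side (already stated in the lemma immediately following~\ref{NT:Thm2.6 sym version}) supplies the generating assertion for $E_{s+d}TE_{s+d}$ on an irreducible $T$-module.

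First I would enumerate: let $\ell$ denote the number of mutually non-isomorphic irreducible $T$-modules $W_1,\ldots,W_\ell$ whose dual endpoint $s$ and diameter $d$ satisfy $s+d=D$ (the primary module contributes, so $\ell\ge 1$). With $\mult_i$ the multiplicity of $W_i$ in $V$, applying $E_D$ to the direct sum of copies $W_i^{(j)}$ produces a direct sum decomposition $E_DV=\sum_{i,j} E_D W_i^{(j)}$ in which each summand is one-dimensional by Lemma~\ref{lem:dimEDW=1}(iii). Since $E_DTE_D$ acts faithfully on $E_DV$ and each summand is an $E_DTE_D$-submodule, Wedderburn theory gives an $\mathbb{F}$-algebra isomorphism $E_DTE_D\cong \mathbb{F}^{\oplus \ell}$; in particular $\dim(E_DTE_D)=\ell$.

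Next I would introduce, for each irreducible $T$-module $W$ with dual endpoint $s$ and diameter $d$ with $s+d=D$, the \emph{dual local eigenvalue sequence} $\{\varphi_h^*\}_{h=1}^D$ determined by $E_D A_h^* E_D\, w = \varphi_h^* w$ for $0\ne w\in E_DW$, and prove the dual analogue of Lemma~\ref{lem:W iff mu, E*D version}: two such modules are isomorphic if and only if their dual local eigenvalue sequences coincide. The argument is the same, using $w=E_Dw$, the inner-product identity $\|BE_Dw\|^2=\overline{w}^\top E_D\overline{B}^\top B E_D w$, and expressing $E_D\overline{B}^\top B E_D\in E_DTE_D$ on $W$ as a polynomial in $\{E_DA_h^*E_D\}_{h=1}^D$ via the dual analogue of Lemma~\ref{NT:Thm2.6 sym version}(i). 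Consequently the vectors $u_i^*=(\varphi_{i,1}^*,\ldots,\varphi_{i,D}^*)\in\mathbb{F}^D$ attached to $W_1,\ldots,W_\ell$ are mutually distinct.

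Then, since $\mathbb{F}$ is infinite, I would choose scalars $\alpha_1,\ldots,\alpha_D\in\mathbb{F}$ so that the linear functional $f(\lambda_1,\ldots,\lambda_D)=\sum_h\alpha_h\lambda_h$ separates the vectors $u_1^*,\ldots,u_\ell^*$, and set
\[
\Delta^* \;=\; \sum_{h=1}^D \alpha_h\, E_D A_h^* E_D \;\in\; E_D M^* E_D.
\]
By construction $\Delta^*$ acts on $E_DV$ with $\ell$ distinct eigenvalues $\{f(u_i^*)\}_{i=1}^\ell$. Since $\Delta^*$ is real and symmetric, its minimal polynomial on $E_DV$ has degree $\ell$, so $E_D,\Delta^*,(\Delta^*)^2,\ldots,(\Delta^*)^{\ell-1}$ are linearly independent in $E_DTE_D$. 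Combined with $\dim(E_DTE_D)=\ell$, these powers form a basis, so $\Delta^*$ alone generates $E_DTE_D$; as $\Delta^*\in E_DM^*E_D$, the subalgebra $E_DM^*E_D$ generates $E_DTE_D$.

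The main obstacle is the dual local eigenvalue separation step: one needs the dual analogue of Lemma~\ref{lem:W iff mu, E*D version} and the dual analogue of Lemma~\ref{NT:Thm2.6 sym version}(i), both of which are used to guarantee that the polynomial representation of $E_D\overline{B}^\top B E_D$ on $W$ uses only $\{E_DA_h^*E_D\}_{h=1}^D$ so that equality of the full sequences $\{\varphi_h^*\}$ forces isomorphism. Once that duality is in place, the Wedderburn/minimal-polynomial counting argument closes the proof.
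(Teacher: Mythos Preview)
Your proposal is correct and follows essentially the same approach as the paper, which simply writes ``Similar to Lemma~\ref{lem:ED*TED*=<ED*MED*>}.'' You have accurately identified all of the dual substitutions (Lemma~\ref{lem:dimEDW=1}(iii),(iv) for (i),(ii); the dual generating lemma for $E_{s+d}TE_{s+d}$; the dual local-eigenvalue-sequence analogue of Lemma~\ref{lem:W iff mu, E*D version}) and carried through the Wedderburn and minimal-polynomial counting argument as intended.
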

\begin{proof}
Similar to Lemma \ref{lem:ED*TED*=<ED*MED*>}.
\end{proof}

\begin{proposition}\label{lem:EDTED sym}
Every element of $E_D T E_D$ is symmetric.
\end{proposition}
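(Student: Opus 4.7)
The plan is to mirror the proof of Proposition \ref{lem:E*DTE*D} verbatim, switching the roles of $M$ and $M^{*}$ and of $E_D^{*}$ and $E_D$. The entire argument reduces to combining three ingredients that have already been established by the time we reach this statement, so no new computation is needed.

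The three ingredients I would invoke are: first, Lemma \ref{lem: E*AE*sym}(ii), which says that every element of $E_D M^{*} E_D$ is a symmetric matrix; second, Proposition \ref{lem:ED*TED* comm}, which gives the commutativity of the algebra $E_D T E_D$; and third, the lemma immediately preceding this proposition, which asserts that $E_D T E_D$ is generated (as an algebra) by $E_D M^{*} E_D$. With these in hand, pick any $B \in E_D T E_D$. By the generation statement, $B$ can be written as a polynomial (with coefficients in $\mathbb{F}$) in some finitely many elements $C_1, C_2, \ldots, C_k$ of $E_D M^{*} E_D$, together with the multiplicative identity $E_D$ of the algebra. Each $C_i$ is symmetric, and by commutativity of $E_D T E_D$ the matrices $C_1, \ldots, C_k$ pairwise commute. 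A polynomial expression in pairwise commuting symmetric matrices is again symmetric, since $(C_i C_j)^{\top} = C_j^{\top} C_i^{\top} = C_j C_i = C_i C_j$ and $E_D^{\top} = E_D$ by \eqref{Ei transpose}. Hence $B^{\top} = B$.

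There is no real obstacle in this step; the only minor point to be careful about is to ensure that the identity of the algebra $E_D T E_D$, namely $E_D$, is itself symmetric, which is exactly what \eqref{Ei transpose} provides. Thus the proof is a two-line consequence of the preceding lemma, the commutativity result, and Lemma \ref{lem: E*AE*sym}(ii), exactly paralleling the proof of Proposition \ref{lem:E*DTE*D}.
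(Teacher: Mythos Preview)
Your proposal is correct and follows exactly the approach the paper takes: the paper's proof simply reads ``Similar to the proof of Proposition~\ref{lem:E*DTE*D},'' which amounts precisely to invoking Lemma~\ref{lem: E*AE*sym}(ii), Proposition~\ref{lem:ED*TED* comm}, and the preceding generation lemma for $E_D T E_D$. Your added detail on why polynomials in commuting symmetric matrices are symmetric is fine but not strictly necessary.
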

\begin{proof}
Similar to the proof of Proposition \ref{lem:E*DTE*D}.
\end{proof}

We now prove Theorem \ref{thm:four algebras}

\begin{proof}[Proof of Theorem \ref{thm:four algebras}]
By Propositions \ref{lem:E1*TE1* comm}, \ref{lem:E*1TE*1}, \ref{lem:ED*TED* comm}, and \ref{lem:EDTED sym}.
\end{proof}

Recall the definition of association schemes; see \cite[Section 2.2]{BI1984}.
We conclude this paper with two problems.

\begin{problem}\label{prob1}
Find necessary and sufficient conditions under which $E_1^*TE_1^*$ is isomorphic to the Bose-Mesner algebra of a commutative association scheme with vertex set $\Gamma_1(x)$.
\end{problem}

\begin{problem}\label{prob2}
Find necessary and sufficient conditions under which $E_D^*TE_D^*$ is isomorphic to the Bose-Mesner algebra of a commutative association scheme with vertex set $\Gamma_D(x)$.
\end{problem}

\begin{example}\label{ex:BM Gamma(x)}
We present some graphs $\Gamma$ such that for every vertex $x$, the algebra $E_1^*TE_1^*$ is isomorphic to the Bose-Mesner algebra of a commutative association scheme on $\Gamma_1(x)$.
\begin{itemize}
	\item[(i)] Assume $\Gamma$ is the Hamming graph $H(D,N)$, where $D\geq 2$ and $N\geq 3$; see \cite[Section 9.2]{BCN}.
	The vertex set $X$ of $\Gamma$ consists of $D$-tuples with entries from the set $\{1,2, \ldots, N\}$.
	Two vertices in $X$ are adjacent whenever they differ in exactly one coordinate. 
	For any $y, z \in X$, the distance $\partial(y,z)$ is equal to the number of coordinates at which $y$ and $z$ differ.
	Fix a vertex $x \in X$ and write $T=T(x)$.
	Consider the first subconstituent $\Gamma(x)$.
	The subgraph of $\Gamma$ induced on $\Gamma(x)$ is a disjoint union of $D$ copies of the complete graph $K_{N-1}$.
	This subgraph corresponds to the commutative association scheme $\mathcal{X} = (\Gamma(x), \{R_i\}_{i=0}^2)$, where 
	\begin{align*}
		R_0 & = \{ (y,y) \mid y \in \Gamma(x) \}, \\
		R_1 & = \{ (y,z) \mid y,z \in \Gamma(x), \ \partial(y,z) = 1 \}, \\
		R_2  & = \{ (y,z) \mid y,z \in \Gamma(x), \ y \neq z, \ \partial(y,z) \neq 1 \}.
	\end{align*}
	The Bose-Mesner algebra of $\mathcal{X}$ has dimension $3$ and is isomorphic to $E_1^* T E_1^*$.

	\item[(ii)] 	Assume $\Gamma$ is the Johnson graph $J(N, D)$, where $D\geq 2$ and $N \geq 2D$; see \cite[Section 9.1]{BCN}.
	The vertex set $X$ of $\Gamma$ consists of the $D$-element subsets of the set $\{1,2,.\ldots, N\}$.
	Two vertices $y, z \in X$ are adjacent whenever $|y \cap z| = D - 1$.
	For any $y, z \in X$, the distance $\partial(y, z)$ is equal to $D - |y \cap z|$. 
	Fix a vertex $x \in X$ and write $T=T(x)$.
	Consider the first subconstituent $\Gamma(x)$.
	The subgraph of $\Gamma$ induced on $\Gamma(x)$ is isomorphic to the Cartesian product $K_{D} \times K_{N-D}$.
	This subgraph corresponds to the commutative association scheme $\mathcal{X} = (\Gamma(x), \{R_i\}_{i=0}^3)$, where 
	\begin{align*}
		R_0 & = \{ (y, y) \mid y \in \Gamma(x) \}, \\
		R_1 & = \{ (y, z) \mid y, z \in \Gamma(x), \ x \cap y = x \cap z \}, \\
		R_2 & = \{ (y, z) \mid y, z \in \Gamma(x), \ x \cup y = x \cup z \}, \\
		R_3 & = \{ (y, z) \mid y, z \in \Gamma(x), \ \partial(x,y) = 2 \}.
	\end{align*}
	The Bose-Mesner algebra of $\mathcal{X}$ has dimension $4$ and is isomorphic to $E_1^* T E_1^*$.

	\item[(iii)] 
	Assume $\Gamma$ is the Grassmann graph $J_q(N,D)$, where $D \geq 2$ and $N > 2D$; see \cite[Section 9.3]{BCN}.
	Let $\mathbb{V}$ denote an $N$-dimensional vector space over $\mathbb{F}_q$.
	The vertex set $X$ of $\Gamma$ consists of the $D$-dimensional subspaces of $\mathbb{V}$.
	Two vertices $y, z \in X$ are adjacent whenever $\dim(y \cap z) = D - 1$.
	For any $y, z \in X$, the distance $\partial(y,z)$ is equal to $D - \dim(y \cap z)$.
	Fix a vertex $x \in X$ and write $T = T(x)$.
	Consider the first subconstituent $\Gamma(x)$.
 	The subgraph of $\Gamma$ induced on $\Gamma(x)$ is isomorphic to the $q$-clique extension of the Cartesian product $K_{\mathsf{n}} \times K_{\mathsf{m}}$, where $\mathsf{n} = \left[ \begin{smallmatrix} D \\ 1 \end{smallmatrix} \right]_q$ and 	$\mathsf{m} = \left[ \begin{smallmatrix} N - D \\ 1 \end{smallmatrix} \right]_q$; 	see \cite[Section 4]{GK2025}.
	This subgraph corresponds to the commutative association scheme $\mathcal{X} = (\Gamma(x), \{R_i\}_{i=0}^4)$, where 
	\begin{align*}
		R_0 & = \{ (y, y) \mid y \in \Gamma(x) \}, \\
		R_1 & = \{ (y, z) \mid y, z \in \Gamma(x), \ x \cap y = x \cap z, \ x + y = x + z \}, \\
		R_2 & = \{ (y, z) \mid y, z \in \Gamma(x), \ x \cap y = x \cap z, \ x + y \neq x + z \}, \\
		R_3 & = \{ (y, z) \mid y, z \in \Gamma(x), \ x \cap y \neq x \cap z, \ x + y = x + z \}, \\
		R_4 & = \{ (y, z) \mid y, z \in \Gamma(x), \ \partial(x,y) = 2 \}.
	\end{align*}
	The Bose-Mesner algebra of $\mathcal{X}$ has dimension $5$ and is isomorphic to $E_1^* T E_1^*$.

\end{itemize}
\end{example}

\begin{example} We present some graphs $\Gamma$ such that for every vertex $x$, the algebra $E_D^*TE_D^*$ is isomorphic to the Bose-Mesner algebra of a commutative association scheme on $\Gamma_D(x)$.
\begin{itemize}
	\item[(i)] Assume $\Gamma$ is the Hamming graph $H(D,N)$ as in Example \ref{ex:BM Gamma(x)}(i).
	Fix a vertex $x \in X$ and write $T = T(x)$.
	Consider the last subconstituent $\Gamma_D(x)$.
	The subgraph of $\Gamma$ induced on $\Gamma_D(x)$ is isomorphic to the Hamming graph $H(D, N-1)$.
	This subgraph corresponds to the commutative association scheme $\mathcal{X} = (\Gamma_D(x), \{R_i\}_{i=0}^D)$, where
	\begin{equation*}
	R_i =  \{ (y,z) \mid y, z \in \Gamma_D(x), \ \partial (y,z) = i \} \qquad (0 \leq i \leq D).
	\end{equation*}
	The restrictions of $E_D^* A_i E_D^*$ $(0 \leq i \leq D)$ to $E_D^*V$ form a basis for the Bose-Mesner algebra of $\mathcal{X}$.
	By Lemma \ref{lem:ED*TED*=<ED*MED*>}, this Bose-Mesner algebra is isomorphic to $E_D^* T E_D^*$.

	\item[(ii)] Assume $\Gamma$ is the Johnson graph $\Gamma = J(N,D)$ as in Example \ref{ex:BM Gamma(x)}(ii).
	Fix a vertex $x \in X$ and write $T = T(x)$.
	Consider the last subconstituent $\Gamma_D(x)$.
	The subgraph of $\Gamma$ induced on $\Gamma_D(x)$ is isomorphic to the Johnson graph $J(N - D, D)$.
	The subgraph corresponds to the commutative association scheme $\mathcal{X} = (\Gamma_D(x), \{R_i\}_{i=0}^d)$, where $d = \min(D, N - 2D)$ and
	\begin{equation*}
	R_i =  \{ (y,z) \mid y, z \in \Gamma_D(x), \  |y \cap z|  = d - i \} \qquad (0 \leq i \leq d).
	\end{equation*}
	The restrictions of $E_D^* A_i E_D^*$ $(0 \leq i \leq d)$ to $E_D^*V$ form a basis for the Bose-Mesner algebra of $\mathcal{X}$.
	By Lemma \ref{lem:ED*TED*=<ED*MED*>}, this Bose-Mesner algebra is isomorphic to $E_D^* T E_D^*$.

	\item[(iii)] Assume $\Gamma$ is the Grassmann graph $J_q(2D,D)$ with $D\geq 2$ as in Example \ref{ex:BM Gamma(x)}(iii).  
	Fix a vertex $x \in X$ and write $T = T(x)$.
	Consider the last subconstituent $\Gamma_D(x)$.
	The subgraph of $\Gamma$ induced on $\Gamma_D(x)$ is isomorphic to the bilinear forms graph on the set $D \times D$ matrices over $\mathbb{F}_q$; see \cite[Section 9.5A]{BCN}.
	The subgraph corresponds to the commutative association scheme $\mathcal{X} = (\Gamma_D(x), \{R_i\}_{i=0}^D)$, where 
	\begin{equation*}
	R_i =  \{ (y,z) \mid y, z \in \Gamma_D(x), \  \dim(y \cap z) = D - i \} \qquad (0 \leq i \leq D).
	\end{equation*}	
	The restrictions of $E_D^* A_i E_D^*$ $(0 \leq i \leq D)$ to $E_D^*V$ form a basis for the Bose-Mesner algebra of $\mathcal{X}$.
	By Lemma \ref{lem:ED*TED*=<ED*MED*>}, this Bose-Mesner algebra is isomorphic to $E_D^* T E_D^*$.

\end{itemize}
\end{example}

\begin{remark}
If $E_1^*TE_1^*$ (resp. $E_D^*TE_D^*$) is the Bose-Mesner algebra of a commutative association scheme with vertex set $\Gamma_1(x)$ (resp. $\Gamma_D(x)$), then this association scheme must be symmetric by Proposition \ref{lem:E*1TE*1} (resp. \ref{lem:E*DTE*D}).
\end{remark}

\section*{Declaration of competing interest}
The authors have no relevant financial or non-financial interests to disclose.

\section*{Data availability}
All data generated or analyzed during this study are included in this
published article.

\section*{Acknowledgements}
The authors express their deep gratitude to Paul Terwilliger for his careful reading and for his many valuable comments and suggestions. 
This work was partially completed while B. Fern\'andez and J.-H. Lee were visiting the Department of Mathematics at Kyungpook National University, to which they are grateful for its warm hospitality.
B. Fern\'andez is partially supported by the Slovenian Research Agency through research program P1-0285 and research projects J1-2451, J1-3001, J1-4008, and J1-50000. 
J. Park is supported by the National Research Foundation of Korea (NRF) grant funded by the Korea government (MSIT) (RS-2024-00356153).

\end{document}